\theoremstyle{plain}
\newtheorem{thm}{Theorem}[section]
\newtheorem{lemma}[thm]{Lemma}
\newtheorem{prop}[thm]{Proposition}
\theoremstyle{definition}
\newtheorem{defi}[thm]{Definition}
\newtheorem{rmk}[thm]{Remark}
\def\dim{\mathop{\hbox {dim}}\nolimits}
\def\det{\mathop{\hbox {det}}\nolimits}
\def\Hom{\mathop{\hbox {Hom}}\nolimits}
\def\Ind{\mathop{\hbox{Ind}}\nolimits}
\newcommand{\uInd}{^\mathrm{u}\mspace{-3mu}\mbox{Ind}}
\newcommand{\frb}{\mathfrak{b}}
\newcommand{\frg}{\mathfrak{g}}
\newcommand{\frh}{\mathfrak{h}}
\newcommand{\frk}{\mathfrak{k}}
\newcommand{\frl}{\mathfrak{l}}
\newcommand{\frp}{\mathfrak{p}}
\newcommand{\frt}{\mathfrak{t}}
\newcommand{\frgl}{\mathfrak{gl}}
\newcommand{\bbC}{\mathbb{C}}
\newcommand{\bbN}{\mathbb{N}}
\newcommand{\bbR}{\mathbb{R}}
\newcommand{\bbZ}{\mathbb{Z}}
\newcommand{\C}{\mathbb{C}}
\newcommand{\Z}{\mathbb{Z}}
\newcommand{\K}{\mathbb{K}}
\newcommand{\GL}{{\mathrm{GL}}}
\newcommand{\SL}{{\mathrm{SL}}}
\newcommand{\GU}{{\mathrm{GU}}}
\newcommand{\SU}{{\mathrm{SU}}}
\newcommand{\rmH}{{\mathrm{H}}}
\newcommand{\CS}{\mathcal{S}}
\newcommand{\be}{\begin {equation}}
\newcommand{\ee}{\end {equation}}
\newcommand{\bp}{\begin {proof}}
\newcommand{\ep}{\end {proof}}
\newcommand{\od}{\operatorname{d}}
\begin{document}

\title[the nonvanishing hypothesis for $\GL_n(\bbC)\times \GL_n(\bbC)$]
{On the nonvanishing hypothesis for Rankin-Selberg convolutions for $\GL_n(\bbC)\times \GL_n(\bbC)$}

\author{Chao-Ping Dong}
\address[Dong]{Institute of Mathematics, Hunan University, Changsha 410082,
P.~R.~China}
\email{chaoping@hnu.edu.cn}
\thanks{Dong is supported by NSFC grant 11571097 and the China Scholarship Council.}

\author{Huajian Xue}
\address[Xue]{Academy of Mathematics and Systems Science,
Chinese Academy of Sciences\\
Beijing, 100190, P.~R.~China} \email{xuehuajian12@mails.ucas.ac.cn}

\abstract{Inspired by Sun's breakthrough in establishing the nonvanishing hypothesis for Rankin-Selberg convolutions for the groups $\GL_n (\bbR)\times \GL_{n-1} (\bbR)$ and  $\GL_n (\bbC)\times \GL_{n-1} (\bbC)$, we confirm it for $\GL_{n} (\bbC)\times \GL_n (\bbC)$ at the central critical point.}
\endabstract

\subjclass[2010]{Primary 22E47; Secondary 22E41.}

\keywords{}

\maketitle

%\tableofcontents
\section{Introduction}\label{sec-intro}

The nonvanishing hypothesis is vital to the arithmetic study of critical values of higher degree L-functions and to the constructions of higher degree p-adic L-functions. Recently, Sun made a breakthrough by confirming it for $\GL_n (\bbR)\times \GL_{n-1} (\bbR)$ and  $\GL_n (\bbC)\times \GL_{n-1} (\bbC)$, see \cite{Sun}. The current paper aims to consider the $\GL_{n} (\bbC)\times \GL_n (\bbC)$ case, which has been expected by  Greni\'e since 2003, see page 284 of \cite{Gre}.

Fix an integer $n\geq 2$. Let $\K$ be a topological field which is isomorphic to $\C$, and write
\[
\iota_1, \iota_2: \K \to \C
\]
for two distinct isomorphisms.

Let $B_n(\C) = T_n(\C) U_n(\C)$ be the group of upper triangular matrices in
$\GL_n(\C)$. Here $T_n(\C)$ is the standard maximal torus in $\GL_n(\C)$, while $U_n(\C)$ is the
standard unipotent radical of $B_n(\C)$.
We identify $\Z^n$ with the set of algebraic characters of $T_n(\C)$ by sending  $\mu = (\mu_i)$ to $t \mapsto \prod_{i} t_i^{\mu_i}$.

Fix a sequence of integers
\[
\mu=(\mu_1\geq\mu_2\geq\cdots\geq\mu_n;\, \mu_{n+1}\geq\mu_{n+2}\geq\cdots\geq\mu_{2n}).
\]
Denote by $F_\mu$ the irreducible algebraic representation of $\GL_n(\C)\times\GL_n(\C)$ with highest weight $\mu$. It is also viewed as an irreducible representation of the real Lie group $\GL_n(\K)$ via the complexification map
\be \label{eq: comlexify}
\GL_n(\K) \to \GL_n(\C)\times\GL_n(\C), \qquad g \mapsto(\iota_1(g), \iota_2(g)).
\ee
As usual, we do not distinguish a representation with its underlying space.

Recall that a representation of a real reductive
group is called a \emph{Casselman-Wallach representation} if it is smooth, Fr\'echet,
of moderate growth, and its Harish-Chandra module has finite length, see \cite{Cas} and Chapter 11 of \cite{Wa2} for more details.
Denote by $\Omega (\mu)$ the set of isomorphism classes of irreducible Casselman-Wallach representations $\pi$ of $\GL_n(\K)$ such that
\begin{itemize}
\item[$\bullet$] $\pi\vert_{\SL_n(\K)}$ is unitarizable and tempered; and
\item[$\bullet$] the relative Lie algebra cohomology
\be
\rmH^\bullet(\frgl_n(\C)\oplus\frgl_n(\C), \GU(n); \pi\otimes F_\mu^{\vee})\neq 0,
\ee
\end{itemize}
where $\frgl_n(\C)\oplus\frgl_n(\C)$ is viewed as the complexification of $\frgl_n(\K)$ through the differential of \eqref{eq: comlexify}, and
\[
\GU(n):=\{g\in\GL_n(\K)\vert\iota_1(g)\iota_2(g)^{\mathrm{t}}\mbox{ is a saclar matrix}\}.
\]

According to Section 3 of \cite{Clo}, we have
\be\label{card-Omega-mu}
\#\Omega (\mu)=\begin{cases}
0, & \mbox{ if } \mu \mbox{ is not pure}; \\
1, & \mbox{ if } \mu \mbox{ is pure}.
\end{cases}
\ee
Here ``$\mu$ is pure'' means that there is an integer  $w$ such that
\be\label{mu-pure-w}
\mu_1+\mu_{2n}=\mu_2+\mu_{2n-1}=\cdots=\mu_n+\mu_{n+1}=w.
\ee
In such a case, we shall say that $\mu$ is \emph{pure with weight} $w$.
Assume that $\mu$ is pure, and let $\pi_\mu$ be the unique representation in $\Omega (\mu)$.

Put
\be\label{bn}
b_n:=\frac{n(n-1)}{2}.
\ee
Then by Lemma 3.14 of \cite{Clo}, we have
$$
\rmH^{b}(\frgl_n(\C)\oplus\frgl_n(\C), \GU(n); \pi_\mu\otimes F_\mu^{\vee})= 0,
\quad \mbox{if } b<b_n,
$$
and
$$
\mathrm{dim}\,\rmH^{b_n}(\frgl_n(\C)\oplus\frgl_n(\C), \GU(n); \pi_\mu\otimes F_\mu^{\vee})= 1.
$$

We fix another sequence of integers
\[
\nu=(\nu_1\geq\nu_2\geq\cdots\geq\nu_n;\, \nu_{n+1}\geq\nu_{n+2}\geq\cdots\geq\nu_{2n}).
\]
Assume that $\nu$ is pure with weight $w^{\prime}$.
Define $F_\nu$, $\Omega (\nu)$ and $\pi_\nu$ similarly.

Write
\[
G:=\GL_n(\K) \qquad \mbox{and} \qquad \tilde K:=\GU(n).
\]
Let $P$ be the standard maximal parabolic subgroup of $G$ of type $(n-1, 1)$. Let ``$\vert \mspace{-1mu}\cdot \mspace{-1mu}\vert_\K$'' denotes the normalized absolute value of $\K$. That is, $\vert z\vert_\K=\iota_1(z) \iota_2(z)$ for $z\in \K$. Let
\[
H_s(p):=\delta_P^s(p)\cdot \eta(p_{nn})^{-1}=\vert \mspace{-3mu}\det p \vert_\K^s \cdot \vert p_{nn}\vert^{-ns}_\K\cdot \eta(p_{nn})^{-1},
\]
where $\eta=\omega_{\pi_{\mu}}\omega_{\pi_{\nu}}$ is the product of the central characters of $\pi_{\mu}$ and $\pi_{\nu}$. For $s\in \C$, define the normalized smooth induced representation
\be\label{Ij}
I_s:=\Ind_P^G \left(H_{s-\frac 1 2}\right).
\ee

Write
\[
G^3:=G\times G\times G \qquad \mbox{and} \qquad \tilde K^3:=\tilde K\times \tilde K\times\tilde K,
\]
then $G$ (resp. $\tilde K$) embeds in $G^3$ (resp. $\tilde K^3$) diagonally. Here and henceforth, we use the corresponding lower case gothic letter to indicate the complexified Lie algebra of a Lie group.

Assume that $\mu$ and $\nu$ are compatible (see Definition \ref{def-compatible}), and that $j\in \bbZ$ is a critical place (see Section \ref{sec-cri.pl.}) for $\pi_\mu \times \pi_\nu$. Denote
\[
\pi_\xi:=\pi_\mu \hat{\otimes} \pi_\nu \hat{\otimes} I_j   \qquad (\mbox{the completed projective space product}).
\]
By Section 2 of \cite{Jac}, the Rankin-Selberg integrals produce a nonzero element
\be
\phi_\pi \in \Hom_G(\pi_\xi, \bbC).
\ee

Let $V_j$ be the finite dimensional representation of $\GL_n(\bbC)\times \GL_n(\bbC)$ defined in Proposition \ref{prop-compatible} (b) or (c),
and put
\be\label{cn}
c_n=n-1.
\ee
Then in the setting of Proposition \ref{prop-BW} (b) and (c), we have
\[
\rmH^{c}(\frg, \tilde K; I_j \otimes V_j)= 0, \quad \mbox{if } c<c_n,
\]
and
\[
\dim \rmH^{c_n}(\frg, \tilde K; I_j \otimes V_j)=1.
\]

Denote
\be\label{F-xi-check}
F_\xi^{\vee}:= F_{\mu}^{\vee} \otimes F_{\nu}^{\vee} \otimes V_j.
\ee
Then there is a nonzero element
\be
\phi_F\in \Hom_{\GL_n(\bbC)\times \GL_n(\bbC)}(F_\xi^\vee, \bbC).
\ee

By K\"{u}nneth formula,
\[
\dim \rmH^{2b_n+c_n}(\frg^3, \tilde K^3; \pi_\xi \otimes F_\xi^\vee)=1.
\]
Note that
\[
\dim_\C(\frg/\tilde\frk)=n^2-1=2b_n+c_n.
\]
It follows that
\[
\dim \rmH^{2b_n+c_n}(\frg,  \tilde K; \bbC)=1.
\]

Put
\be\label{kappa}
\kappa:=\frac{w+w^{\prime}}{2}.
\ee

Our main result is as follows, which can be viewed as the nonvanishing hypothesis for $\GL_n \times \GL_n$ at the critical place $j=-\kappa+\frac{1}{2}$.

\medskip
\noindent \textbf{Theorem A.}\quad
\emph{Assume that
\be\label{assumption}
\kappa \mbox{ is an half integer and fix } j=-\kappa+\frac{1}{2}.
\ee
By restriction of cohomology, the $G$-equivariant linear functional}
\be
\phi_\pi \otimes \phi_F: \pi_\xi \otimes F_\xi^\vee \to \bbC=\bbC \otimes \bbC
\ee
\emph{induces a nonzero map}
\be\label{map-ThmA}
\rmH^{2b_n+c_n}(\frg^3, \tilde K^3; \pi_\xi \otimes F_\xi^{\vee}) \to \rmH^{2b_n+c_n}(\frg, \tilde K; \bbC).
\ee
\medskip

Put $j_0=-\kappa+\frac{1}{2}$. We remark that $I_{j_0}$ is unitary when restricted to $SL(n, \K)$.
For a general critical place $j$ of $\pi_{\mu}\times \pi_{\nu}$, we succeeded in using the translation functor (see Charper 7 of \cite{KV}) to realize $I_j$ as a submodule of $I_{j_0}\otimes F_j$ with multiplicity one, where $F_j$ is certain finite dimensional representation of $\frg$. However, when $j\neq j_0$, the lowest $K$-type of $I_j$ has multiplicity greater than one in
$I_{j_0}\otimes F_j$. This prevents us from obtaining an analogue of Proposition 3.5 of \cite{Sun}.

The outline of the paper is as follows: We recall the definition of critical places and calculated them for $\pi_{\mu}\times \pi_{\nu}$ in Section 2. Then we recall some known results and describe compatibility in a clean fashion in Section 3. We collect all the necessary analysis of finite dimensional representations in Section 4, while those for infinite dimensional representations are presented in Section 5. Then after a short discussion of relative Lie algebra cohomology spaces in Section 6, we prove Theorem A in Section 7.

Finally, we remark that although we have followed  Sun's pioneering paper \cite{Sun} closely for the general approach, there are still many delicate analysis to carry out in our case.

\section{The critical places}\label{sec-cri.pl.}

Assume that $\mu$ is pure with weight $w$, see \eqref{mu-pure-w}. We write $\mu=(\mu^L; \, \mu^R)$, where
\be\label{mu}
\mu^L=(\mu_1, \dots, \mu_n), \quad \mu^R=(\mu_{n+1}, \dots, \mu_{2n})=(w-\mu_n, \dots, w-\mu_1).
\ee
As in Section 2.4 of \cite{Rag}, we put
\be\label{ai-bi}
a_i=\mu_i + \frac{n+1-2i}{2}, \quad b_i=w-a_i, \quad 1\leq i \leq n.
\ee
Now define the representation $J_{\mu}$ to be induced from the Bore subgroup $B_n(\bbC)$ of upper triangular matrices as:
\be\label{J-mu}
J_{\mu}:= \mathrm{Ind}_{B_n(\bbC)}^{\GL_n(\bbC)}\left(z_{11}^{a_1}\overline{z}_{11}^{b_1}\otimes \cdots \otimes z_{nn}^{a_n}\overline{z}_{nn}^{b_n}\right),
\ee
where for any half-integers $a, b$, $z^{a}\overline{z}^{b}$ stands for the character of $\bbC^{\times}$ sending $z$ to $z^{a}\overline{z}^{b}$. It is known that $J_{\mu}$ is the unique representation in $\Omega(\mu)$, see Proposition 2.14 of \cite{Rag}. Thus we can take $\pi_{\mu}$ to be $J_{\mu}$.

Similarly, assume that $\nu$ is pure with weight $w^{\prime}$ and write $\nu=(\nu^L; \, \nu^R)$, where
\be\label{nu}
\nu^L=(\nu_1, \dots, \nu_n), \quad \nu^R=(\nu_{n+1}, \dots, \nu_{2n})=(w^{\prime}-\nu_n, \dots, w^{\prime}-\nu_1).
\ee
Put
\be\label{ci-di}
c_i=\nu_i + \frac{n+1-2i}{2}, \quad d_i=w^{\prime}-c_i, \quad 1\leq i \leq n.
\ee
Now we can take the representation $\pi_{\nu}$ to be the following representation:
\be\label{J-nu}
J_{\nu}:= \mathrm{Ind}_{B_n(\bbC)}^{\GL_n(\bbC)}\left(z_{11}^{c_1}\overline{z}_{11}^{d_1}\otimes \cdots \otimes z_{nn}^{c_n}\overline{z}_{nn}^{d_n}\right).
\ee

\begin{defi}
 An integer $s_0$ is called a \emph{critical place} for $\pi_\mu\times\pi_\nu$ if neither $L(\pi_\mu\times\pi_\nu;s)$ nor $L(\pi_\mu^\vee\times\pi_\nu^\vee;1-s)$ has a pole at $s_0$.
\end{defi}
We are going to determine the set of critical places for $\pi_\mu\times\pi_\nu$.
Define
\be\label{cmunu}
c_{\mu, \nu}=\min_{1\leq i, j\leq n}|\mu_i+\nu_j-\kappa+(n+1)-(i+j)|.
\ee

\begin{prop}\label{prop-critical-places}
The integer $s$ is a critical place for $\pi_{\mu}\times \pi_{\nu}$ if and only if
\be\label{critical-places}
1-\kappa-c_{\mu, \nu} \leq s \leq -\kappa+c_{\mu, \nu}.
\ee
\end{prop}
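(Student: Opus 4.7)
The strategy is to compute the archimedean Rankin--Selberg L-factor as a product over pairs of inducing characters of $J_\mu$ and $J_\nu$, identify the pole locations, and then intersect the non-pole regions of $s\mapsto L(s,\pi_\mu\times\pi_\nu)$ and $s\mapsto L(1-s,\pi_\mu^\vee\times\pi_\nu^\vee)$. The combinatorial part will consist of rewriting everything in terms of the quantity $x_{i,j}:=a_i+c_j-\kappa$ that appears inside the absolute value in \eqref{cmunu}.

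First, by \eqref{J-mu} and \eqref{J-nu}, $\pi_\mu$ and $\pi_\nu$ are full principal series, so their Langlands parameters are $\bigoplus_i\chi_i$ and $\bigoplus_j\chi'_j$ with $\chi_i(z)=z^{a_i}\bar z^{b_i}$ and $\chi'_j(z)=z^{c_j}\bar z^{d_j}$. Invoking the factorisation of the Rankin--Selberg L-factor at an archimedean place together with the standard identity $L(s,z^p\bar z^q)=\Gamma_{\bbC}(s+\max(p,q))$ yields
\[
L(s,\pi_\mu\times\pi_\nu)=\prod_{i,j=1}^{n}\Gamma_{\bbC}\bigl(s+\max(a_i+c_j,\,b_i+d_j)\bigr),
\]
so the pole set of this L-function is $\bigcup_{i,j}\{-\max(a_i+c_j,b_i+d_j)-k:k\in\bbZ_{\geq 0}\}$. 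An entirely analogous computation for $\pi_\mu^\vee\times\pi_\nu^\vee$ (whose parameters are $\chi_i^{-1}$, ${\chi'_j}^{-1}$) identifies the pole set of $s\mapsto L(1-s,\pi_\mu^\vee\times\pi_\nu^\vee)$ with $\bigcup_{i,j}\{\min(a_i+c_j,b_i+d_j)-k+1:k\in\bbZ_{\geq 0}\}$ oriented upward.

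Next, since $a_i+b_i=w$ and $c_j+d_j=w'$ by \eqref{ai-bi}, \eqref{ci-di}, we get $b_i+d_j=2\kappa-(a_i+c_j)=\kappa-x_{i,j}$, and therefore
\[
\max(a_i+c_j,\,b_i+d_j)=\kappa+|x_{i,j}|,\qquad \min(a_i+c_j,\,b_i+d_j)=\kappa-|x_{i,j}|.
\]
By \eqref{cmunu}, $c_{\mu,\nu}=\min_{i,j}|x_{i,j}|$. The key book-keeping observation is that every $x_{i,j}=\mu_i+\nu_j+(n+1)-(i+j)-\kappa$ lies in the single coset $-\kappa+\bbZ$, so the numbers $-\kappa-|x_{i,j}|$ are all integers that pairwise differ by integers. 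Consequently the union of the arithmetic progressions $\{-\kappa-|x_{i,j}|-k:k\geq 0\}$ is the full integer half-line
\[
\{s\in\bbZ:s\leq -\kappa-c_{\mu,\nu}\},
\]
the rightmost point being contributed by any $(i,j)$ realising the minimum in \eqref{cmunu}. Symmetrically, the pole set of $s\mapsto L(1-s,\pi_\mu^\vee\times\pi_\nu^\vee)$ is $\{s\in\bbZ:s\geq 1-\kappa+c_{\mu,\nu}\}$.

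An integer $s$ is therefore a critical place iff it avoids both half-lines, i.e.\ iff $1-\kappa-c_{\mu,\nu}\leq s\leq -\kappa+c_{\mu,\nu}$, which is \eqref{critical-places}. The only genuinely substantive points are (i) the correct normalisation of the local L-factor of a character of $\bbC^\times$ and the fact that the Rankin--Selberg L-factor decomposes as the product of character L-factors attached to the inducing data (this is classical, e.g.\ \cite{Jac}), and (ii) the ``same coset'' observation, which is what guarantees that the union of pole progressions fills an integer half-line without gaps and thereby converts a pointwise non-pole condition into the closed inequalities above. The rest is linear algebra in the exponents.
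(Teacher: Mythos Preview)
Your proof is correct and follows essentially the same route as the paper: both compute the archimedean Rankin--Selberg factor as a product of Gamma functions indexed by pairs $(i,j)$, rewrite the argument as $s+\kappa+|x_{i,j}|$ (the paper via $\frac{a_i+b_i+c_j+d_j}{2}+\frac{|a_i-b_i+c_j-d_j|}{2}$, you via $\max(a_i+c_j,b_i+d_j)$), and then read off the inequalities from the pole locations of $\Gamma$. The only difference is that you make explicit the ``same coset'' observation ensuring $\kappa+|x_{i,j}|\in\bbZ$, which the paper uses implicitly when it passes from ``$\Gamma$ has no pole at $s+\kappa+|x_{i,j}|$'' to ``$s+\kappa+|x_{i,j}|\geq 1$''.
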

\begin{proof}
By \eqref{ai-bi}, \eqref{ci-di} and Section 4 of \cite{Kn}, we have
\begin{eqnarray*}
L(\pi_\mu\otimes \pi_\nu; s)
 &\sim& \prod_{i=1}^{n} \prod_{j=1}^{n}  \Gamma\left(s + \frac{a_i+b_i+c_i+d_i}{2}+\frac{|a_i-b_i+c_i-d_i|}{2}\right)\\
&=& \prod_{i=1}^{n} \prod_{j=1}^{n} \Gamma\left(s + \kappa +\frac{|a_i-b_i+c_i-d_i|}{2}\right)\\
&=& \prod_{i=1}^{n} \prod_{j=1}^{n} \Gamma\left(s + \kappa + |\mu_i +\nu_j-\kappa + (n+1)-(i+j)|\right),
\end{eqnarray*}
where ``$\sim$" means equality up to a nonzero real number.
Similarly, we have
\begin{eqnarray*}
L\left(\pi_\mu^{\vee} \otimes \pi_\nu^{\vee}; 1-s\right)
 \sim \prod_{i=1}^{n} \prod_{j=1}^{n} \Gamma\left(1-s - \kappa + |\mu_i +\nu_j-\kappa + (n+1)-(i+j)|\right).
\end{eqnarray*}
Since the gamma function has simple poles at non-positive integers, we have that the integer $s$ is a critical place for $\pi_{\mu}\times \pi_{\nu}$ if and only if
$$
s + \kappa + |\mu_i +\nu_j-\kappa + (n+1)-(i+j)|\geq 1,
$$
and
$$
1-s - \kappa + |\mu_i +\nu_j-\kappa + (n+1)-(i+j)|\geq 1
$$
for all $1\leq i, j \leq n$. Then one arrives at \eqref{critical-places} immediately.
\end{proof}

\section{Compatibility}\label{}

\subsection{}

Let $G=\GL(n, \K)$. Then $G_{\bbC}=\GL(n, \bbC)\times\GL(n, \bbC)$, $\frg_0=\frg\frl(n, \bbC)$ and $\frg =\frg\frl(n, \bbC)\oplus \frg\frl(n, \bbC)$.
Take $T$ be the the Cartan subgroup consisting of the diagonal matrices.
Let $P$ be the standard maximal parabolic subgroup of type $(n-1, 1)$.
Then the Levi factor $L$ of $P$ has the form $A\times M$, where $A$ consists of diagonal matrices with diagonal entries $a_1, \dots, a_1, a_2$; while $M$ consists of block-diagonal matrices with size $(n-1, 1)$. We fix a set of positive roots $\Delta^+(\frg, \frt)$ for  $\Delta(\frg, \frt)$,  and choose compatible positive root systems for $P$ and $L$.
Let $W=W(\frg_, \frt)$ be the Weyl group of $\frg$ with respect to $\frt$,
and similarly $W_L=W(\frl, \frt)$.

Recall that for $j\in\bbZ$, we have the representation
$I_j=\Ind_P^G(H_{j-\frac{1}{2}})$,
where $H_s(p)=|\mspace{-3mu}\det p|_{\K}^s \cdot  |p_{nn}|_{\K}^{-ns}\cdot \eta(p_{nn})^{-1}$.
Here $\eta (z)=\omega_{\pi_{\mu}}(z)\omega_{\pi_{\nu}}(z)$ is the product of the central characters of $\pi_{\mu}$ and $\pi_{\nu}$. Put
\be\label{k-eta}
k_{\eta}:=\sum_{i=1}^{n}(a_i+ c_i)=\sum_{i=1}^{n}(\mu_i+ \nu_i).
\ee
By \eqref{J-mu} and \eqref{J-nu}, we have that
$$
\eta(z)=z^{\sum_{i=1}^{n}(a_i+ c_i)}\, \overline{z}^{\sum_{i=1}^{n}(b_i+ d_i)}
=z^{k_{\eta}} \, \overline{z}^{2n\kappa-k_{\eta}}.
$$

For any $k\in\bbZ$, we denote by $\mathrm{det}^k$ the representation of $GL(n, \bbC)$ with highest weight $(k, k, \dots, k)$.
For $a\in\bbN$, we denote by $\mathrm{Sym}^a$ the representation of $GL(n, \bbC)$ with highest weight $(a, 0, \dots, 0)$; while if $a<0$, we  denote by $\mathrm{Sym}^a$ the representation of $GL(n, \bbC)$ with highest weight $(0, 0, \dots, a)$.

Let us record Proposition 2 and Corollary 1 of \cite{Gre} for later usage.

\begin{prop}\label{prop-Gre} \emph{(Greni\'e)}
Let $\sigma$ and $\tau$ be two finite dimensional irreducible representations of $GL(n, \bbC)$ with highest weights $\lambda=(\lambda_1, \dots, \lambda_n)$ and $\mu=(\mu_1, \dots, \mu_n)$, respectively. Then $\mathrm{det}^d$ occurs in $\tau \otimes \sigma \otimes\mathrm{Sym}^{a}$ $(a\in\bbN)$ if and only if $\max_{1\leq i\leq n}\{\lambda_i+\mu_{n+1-i}\}\leq d\leq \min_{1\leq i\leq n-1}\{\lambda_i+\mu_{n-i}\}$ and $a=nd-\sum_{i=1}^{n}(\mu_i+\lambda_i)\geq 0$.
Moreover, in such a case, $\mathrm{det}^d$ occurs with multiplicity one in $\tau \otimes \sigma\otimes\mathrm{Sym}^{a}$.
\end{prop}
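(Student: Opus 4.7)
The plan is to reduce the claim to the classical Pieri formula for tensoring an irreducible representation of $\GL(n,\bbC)$ with a symmetric power of the standard representation. First I would use the duality
\[
\mathrm{Hom}_{\GL(n,\bbC)}(\tau \otimes \sigma \otimes \mathrm{Sym}^{a}, \, \mathrm{det}^{d})
\;\cong\;
\mathrm{Hom}_{\GL(n,\bbC)}(\sigma \otimes \mathrm{Sym}^{a}, \, \tau^{\vee} \otimes \mathrm{det}^{d})
\]
to replace the question of whether $\mathrm{det}^{d}$ occurs in $\tau \otimes \sigma \otimes \mathrm{Sym}^{a}$ by the equivalent question of whether $\tau^{\vee} \otimes \mathrm{det}^{d}$, which is irreducible of highest weight $(d-\lambda_{n}, d-\lambda_{n-1}, \ldots, d-\lambda_{1})$, occurs in $\sigma \otimes \mathrm{Sym}^{a}$.

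Next I would apply Pieri's rule. Writing $\sigma_{\beta}$ for the irreducible representation of highest weight $\beta=(\beta_{1}, \ldots, \beta_{n})$, one has
\[
\sigma \otimes \mathrm{Sym}^{a} \;=\; \bigoplus_{\beta} \sigma_{\beta},
\]
the sum ranging over integer sequences interlacing $\mu$, namely
\[
\beta_{1} \geq \mu_{1} \geq \beta_{2} \geq \mu_{2} \geq \cdots \geq \beta_{n} \geq \mu_{n},
\]
subject to the size condition $\sum_{i}\beta_{i} = \sum_{i}\mu_{i} + a$; each summand occurs with multiplicity one. Setting $\beta_{i} := d - \lambda_{n+1-i}$, the constraint $\beta_{i} \geq \mu_{i}$ becomes $d \geq \lambda_{n+1-i}+\mu_{i}$ for $1\le i \le n$, equivalently $d \geq \max_{1 \le i \le n}\{\lambda_{i}+\mu_{n+1-i}\}$; the constraint $\mu_{i} \geq \beta_{i+1}$ for $1 \le i \le n-1$ becomes $d \leq \lambda_{n-i}+\mu_{i}$, equivalently $d \leq \min_{1 \le i \le n-1}\{\lambda_{i}+\mu_{n-i}\}$; finally the size condition reads $nd - \sum_{i}\lambda_{i} = \sum_{i}\mu_{i}+a$, i.e.\ $a = nd - \sum_{i=1}^{n}(\lambda_{i}+\mu_{i})$. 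The nonnegativity $a \geq 0$ is precisely the hypothesis $a\in\bbN$, and the multiplicity one claim is inherited directly from Pieri.

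I do not expect a serious obstacle; the only care required is in the index reversal when dualising $\tau$, which is what forces the $\max$ (arising from all $n$ of the $\beta_{i} \geq \mu_{i}$ bounds) on the lower side and the $\min$ over only $i = 1, \ldots, n-1$ (arising from the $n-1$ bounds $\mu_{i} \geq \beta_{i+1}$) on the upper side. Alternatively, one could reach the same conclusion by running the Littlewood--Richardson rule for $\tau \otimes \sigma$ and extracting the coefficient of $\mathrm{Sym}^{a}\otimes\mathrm{det}^{-d}$; but dualising first and invoking Pieri on the symmetric-power side keeps the combinatorics genuinely one-dimensional and yields the cleanest bookkeeping.
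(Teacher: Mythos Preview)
Your argument is correct and is precisely the route the paper indicates: the paper does not give a detailed proof but simply records that the proposition ``is deduced from the Pieri's rule, see Corollary 9.2.4 of \cite{GW}'', and your proposal is exactly a clean implementation of that deduction via the duality $\mathrm{Hom}(\tau\otimes\sigma\otimes\mathrm{Sym}^a,\det^d)\cong\mathrm{Hom}(\sigma\otimes\mathrm{Sym}^a,\tau^\vee\otimes\det^d)$ followed by Pieri. One cosmetic point: in the statement $\sigma$ carries the highest weight $\lambda$ and $\tau$ carries $\mu$, whereas in your write-up you have effectively interchanged these labels (your $\tau^\vee\otimes\det^d$ should have highest weight $(d-\mu_n,\ldots,d-\mu_1)$, and the Pieri interlacing for $\sigma\otimes\mathrm{Sym}^a$ should involve $\lambda$); since the final condition is symmetric in $(\lambda,\mu)$ this does not affect the outcome, but you should align the notation before finalising.
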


The above proposition is deduced from the Pieri's rule, see Corollary 9.2.4 of \cite{GW}.
The following result can be deduced as Proposition 5 of \cite{Gre}, where the main tool is Theorem III.3.3 of \cite{BW}.

\begin{prop}\label{prop-BW} \emph{(Greni\'e)}
The relative Lie algebra cohomology $\rmH^*(\frg, K; I_j\otimes V_j)$ is non-vanishing if and only if one of the following happens:
\begin{itemize}
\item[(a)] $j\geq \max\left\{ 1-\frac{k_{\eta}}{n}, 1+\frac{k_{\eta}}{n}-2\kappa\right\}$, and $$V_j = (\mathrm{Sym}^{nj + k_{\eta}-n}\otimes \mathrm{det}^{1-j};
\, \mathrm{Sym}^{nj -n - k_{\eta}+ 2n\kappa}\otimes \mathrm{det}^{1-j}).$$
 We then take $l(j)=2(n-1)$.
\item[(b)] $1-2\kappa +\frac{k_{\eta}}{n}\leq j\leq  -\frac{k_{\eta}}{n}$, and $$V_j =
(\mathrm{det}^{-j}\otimes \mathrm{Sym}^{nj + k_{\eta}};
\, \mathrm{Sym}^{nj -n - k_{\eta}+ 2n\kappa}\otimes \mathrm{det}^{1-j}).$$
 We then take $l(j)=n-1$.
 \item[(c)] $1 -\frac{k_{\eta}}{n}\leq j\leq  -2\kappa +\frac{k_{\eta}}{n}$, and $$V_j =
(\mathrm{Sym}^{nj + k_{\eta}-n}\otimes \mathrm{det}^{1-j};
\, \mathrm{det}^{-j} \otimes \mathrm{Sym}^{nj + 2n\kappa - k_{\eta}}).$$
 We then take $l(j)=n-1$.
  \item[(d)] $ j\leq \min\left\{-\frac{k_{\eta}}{n}, \frac{k_{\eta}}{n} -2\kappa\right\}$, and $$V_j =
(\mathrm{det}^{-j}\otimes \mathrm{Sym}^{nj + k_{\eta}};
\, \mathrm{det}^{-j} \otimes \mathrm{Sym}^{nj + 2n\kappa - k_{\eta}}).$$
 We then take $l(j)=0$.
 \end{itemize}
The cohomology is equal to $\bbC$ in degree $l(j)$, $\bbC^2$ in degree $l(j)+1$, $\bbC$ in degree $l(j)+2$ and zero in all other degrees.
\end{prop}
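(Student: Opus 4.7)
The plan is to follow Grenié's proof of Proposition 5 in \cite{Gre}, which proceeds by direct application of Theorem III.3.3 of \cite{BW}. That theorem applied to our parabolically induced $I_j = \Ind_P^G(H_{j-1/2})$ and any finite-dimensional algebraic $V_j$ yields a Shapiro-type isomorphism
\[
\rmH^k(\frg, \tilde K; I_j \otimes V_j) \;\cong\; \bigoplus_{p+q=k}\rmH^p\bigl(\frl, L\cap \tilde K;\, H_{j-1/2}\otimes \rmH^q(\frn, V_j)\bigr),
\]
where $\frl$ and $\frn$ are the complexified Levi and nilradical of $P$. The argument then naturally splits into three steps: compute $\rmH^*(\frn, V_j)$ by Kostant, identify the matching condition on the Levi by Pieri's rule (Proposition \ref{prop-Gre}), and tabulate the degrees.

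For the first step, because $\frn = \frn^L \oplus \frn^R$ splits along the two $\GL_n(\bbC)$-factors of $\frg$, Künneth reduces the computation to a tensor product of two Kostant sums. For the $(n-1,1)$-parabolic inside $\GL_n(\bbC)$, the set $W^P$ has $n$ minimal coset representatives of lengths $0, 1, \ldots, n-1$, so each of $\rmH^{p_1}(\frn^L, V_j^L)$ and $\rmH^{p_2}(\frn^R, V_j^R)$ is a single explicit irreducible $L$-representation with highest weight obtained by the $\rho_P$-shifted Weyl action on the highest weight of $V_j^L$ or $V_j^R$. For the second step, the resulting $(\frl, L \cap \tilde K)$-cohomology is non-zero exactly when (i) the central character of the chosen summand cancels $H_{j-1/2}$ on the split torus of $L$, and (ii) the $M$-factor of that summand contains the trivial $(L\cap \tilde K)\cap M$-representation. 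By branching from $\GL_n$ to $\GL_{n-1} \times \GL_1$ together with Frobenius reciprocity, condition (ii) translates into the statement that a power $\det^d$ appears in the tensor product of $V_j^L$, $V_j^R$, and a $\Sym^a$-factor coming from the $|\cdot|_\K$-part of $H_{j-1/2}$. Proposition \ref{prop-Gre} then gives the exact inequalities governing $d$ and $a$, and in particular the precise range of $j$ for which the matching can occur.

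For the third step, the four cases (a)--(d) correspond to the four sign combinations of the integers $nj + k_\eta - n$ and $nj - n - k_\eta + 2n\kappa$: on each $\GL_n(\bbC)$-side, the sign of this integer determines whether the $\Sym$-factor enters in standard form $\Sym^a \otimes \det^{1-j}$ (Weyl element of length $0$) or in its reflected form $\det^{-j}\otimes \Sym^a$ (long cyclic Weyl element of length $n-1$). Summing the two Weyl-lengths yields $l(j) \in \{0, n-1, 2(n-1)\}$ as stated. The rank pattern $(\bbC, \bbC^2, \bbC)$ in three consecutive degrees is then the $(\frl, L\cap \tilde K)$-cohomology with trivial coefficient of the residual one-dimensional split abelian piece of $L$ left over after the $H_{j-1/2}$-matching, and follows from a direct Koszul calculation. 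The main obstacle I anticipate is the bookkeeping of the $\rho_P$-shifts against the specific normalization of $H_{j-1/2} = \delta_P^{j-1/2}\cdot \eta(p_{nn})^{-1}$, since these shifts are precisely what convert Proposition \ref{prop-Gre}'s abstract bounds into the clean inequalities on $j$ that partition into the four regimes.
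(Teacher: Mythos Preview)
Your proposal is correct and follows exactly the approach the paper indicates: the paper does not give a proof but simply states that the result ``can be deduced as Proposition 5 of \cite{Gre}, where the main tool is Theorem III.3.3 of \cite{BW},'' which is precisely the Shapiro--Kostant--Pieri computation you outline. Your plan is in fact more detailed than what the paper itself supplies.
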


We put
\be\label{M-m}
M_{\mu, \nu}^n := \max_{i+j=n\atop 1\leq i, j\leq n} \left\{\mu_i+\nu_{j}\right\}, \qquad
m_{\mu, \nu}^n := \min_{i+j=n\atop 1\leq i, j\leq n} \left\{\mu_i+\nu_{j}\right\}.
\ee
The numbers $M_{\mu, \nu}^{n+1}$, $m_{\mu, \nu}^{n+1}$, $M_{\mu, \nu}^{n+2}$, $m_{\mu, \nu}^{n+2}$ are interpreted similarly.

\begin{prop}\label{prop-compatible} \emph{(i)} In the setting of Proposition \ref{prop-BW}(b), we have
 $$\mathrm{dim}\, \mathrm{Hom}_{G_{\bbC}}(F_{\mu}^{\vee}\otimes F_{\nu}^{\vee}\otimes V_j, \bbC)\leq 1.$$
 Moreover, equality holds if and only if
\be\label{cond-b}
- m_{\mu, \nu}^{n} \leq  j\leq -M_{\mu, \nu}^{n+1}
\quad\mbox{and}\quad
1-2\kappa + M_{\mu, \nu}^{n+1} \leq  j\leq 1-2\kappa +m_{\mu, \nu}^{n}.
\ee

\noindent\emph{(ii)} In the setting of Proposition \ref{prop-BW}(c), we have
 $$\mathrm{dim}\,\mathrm{Hom}_{G_{\bbC}}(F_{\mu}^{\vee}\otimes F_{\nu}^{\vee}\otimes V_j, \bbC)\leq 1.$$ Moreover, equality holds if and only if
\be\label{cond-c}
 1-m_{\mu, \nu}^{n+1} \leq  j\leq 1-M_{\mu, \nu}^{n+2}
\quad\mathrm{and}\quad
M_{\mu, \nu}^{n+2} -2\kappa \leq  j\leq -2\kappa + m_{\mu, \nu}^{n+1}.
\ee
\end{prop}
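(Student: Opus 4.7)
The plan is to exploit the external tensor product structure $G_\bbC=\GL_n(\bbC)\times\GL_n(\bbC)$. Since $F_\mu=F_{\mu^L}\otimes F_{\mu^R}$, $F_\nu=F_{\nu^L}\otimes F_{\nu^R}$, and $V_j=V_j^L\otimes V_j^R$ with $V_j^L,V_j^R$ the two components of the pair displayed in Proposition~\ref{prop-BW}(b) or (c), the Hom space factors as a tensor product, and the problem reduces to bounding, for each side $\star\in\{L,R\}$,
\[
m^\star:=\dim\mathrm{Hom}_{\GL_n(\bbC)}\bigl(F_{\mu^\star}^\vee\otimes F_{\nu^\star}^\vee\otimes V_j^\star,\,\bbC\bigr),
\]
which by Hom-tensor adjunction equals the multiplicity of the irreducible $V_j^\star$ inside $F_{\mu^\star}\otimes F_{\nu^\star}$. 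The primary tool is Greni\'e's Proposition~\ref{prop-Gre}, which computes the multiplicity of $\det^d$ in $\tau\otimes\sigma\otimes\mathrm{Sym}^a$ for arbitrary irreducible $\tau,\sigma$ and any $a\in\bbN$.

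To bring each $m^\star$ into Greni\'e's standardized format, I will use two elementary moves: the \emph{dual-of-Sym move}, based on the paper's convention $(\mathrm{Sym}^a)^\vee=\mathrm{Sym}^{-a}$ and Hom-tensor adjunction, which converts a nonpositive Sym exponent into a nonnegative one; and the \emph{purity shift}, based on the identities $F_{\mu^R}\cong\det^{w}\otimes F_{\mu^L}^\vee$ and $F_{\nu^R}\cong\det^{w'}\otimes F_{\nu^L}^\vee$ (immediate from $\mu^R_i=w-\mu_{n+1-i}$ and the analogous formula for $\nu$), which transfers $R$-computations to the $L$-side (or vice versa) at the cost of a determinantal twist. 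In case (b) the Sym exponent of $V_j^L$ is $nj+k_\eta\leq 0$ while that of $V_j^R$ is $nj-n-k_\eta+2n\kappa\geq 0$, so applying the dual-of-Sym move to $m^L$ and the purity shift to $m^R$ yields
\[
m^L=\dim\mathrm{Hom}\bigl(\det^{-j},\,F_{\mu^L}\otimes F_{\nu^L}\otimes\mathrm{Sym}^{-nj-k_\eta}\bigr),
\]
\[
m^R=\dim\mathrm{Hom}\bigl(\det^{j+2\kappa-1},\,F_{\mu^L}\otimes F_{\nu^L}\otimes\mathrm{Sym}^{nj-n-k_\eta+2n\kappa}\bigr),
\]
both with nonnegative Sym exponent and hence directly in Greni\'e's format. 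Case (c) is completely symmetric: there the Sym exponents of $V_j^L$ and $V_j^R$ swap sign, and the analogous moves bring both $m^L$ and $m^R$ into Greni\'e's format but with $(\tau,\sigma)=(F_{\mu^R},F_{\nu^R})$ instead of $(F_{\mu^L},F_{\nu^L})$.

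Proposition~\ref{prop-Gre} now immediately forces $m^L,m^R\in\{0,1\}$, proving the asserted inequality. For the equality condition I read off the range constraint $\max\{\lambda_i+\mu_{n+1-i}\}\leq d\leq\min_{i\leq n-1}\{\lambda_i+\mu_{n-i}\}$ from Proposition~\ref{prop-Gre} for each $m^\star$. In case (b) these constraints are already in the variables $\mu,\nu$ and read $M_{\mu,\nu}^{n+1}\leq-j\leq m_{\mu,\nu}^n$ together with $M_{\mu,\nu}^{n+1}\leq j+2\kappa-1\leq m_{\mu,\nu}^n$, which upon rearrangement is precisely \eqref{cond-b}. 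In case (c) the constraints involve $\mu^R,\nu^R$, but the identities $\mu^R_i+\nu^R_{n+1-i}=2\kappa-(\mu_{n+1-i}+\nu_i)$ and $\mu^R_i+\nu^R_{n-i}=2\kappa-(\mu_{n+1-i}+\nu_{i+1})$, together with the observation that the index pairs $(n+1-i,i)$ for $1\leq i\leq n$ (resp.\ $(n+1-i,i+1)$ for $1\leq i\leq n-1$) exhaust the pairs in $\{1,\ldots,n\}^2$ summing to $n+1$ (resp.\ $n+2$), turn the max and the min into $2\kappa-m_{\mu,\nu}^{n+1}$ and $2\kappa-M_{\mu,\nu}^{n+2}$ respectively; this reproduces \eqref{cond-c}.

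The whole argument is essentially a mechanical application of Proposition~\ref{prop-Gre}, so I do not anticipate any deep conceptual obstacle. The only delicate point is the bookkeeping: keeping the paper's sign convention for $\mathrm{Sym}^a$ with $a<0$ straight, choosing at each step whether to apply the dual-of-Sym move or the purity shift according to the sign of the ambient Sym exponent, and correctly tracking which of $M_{\mu,\nu}^{n+1}$, $m_{\mu,\nu}^{n}$, $M_{\mu,\nu}^{n+2}$, $m_{\mu,\nu}^{n+1}$ emerges from each case.
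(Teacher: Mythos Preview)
Your proposal is correct and follows essentially the same route as the paper: factor through $G_\bbC=\GL_n(\bbC)\times\GL_n(\bbC)$, rewrite each factor as a multiplicity of $\det^d$ in $\tau\otimes\sigma\otimes\mathrm{Sym}^a$ with $a\geq 0$, and invoke Greni\'e's Proposition~\ref{prop-Gre}. The only cosmetic difference is that for the $R$-factor in case~(b) you apply the purity identity $F_{\mu^R}\cong\det^{w}\otimes F_{\mu^L}^\vee$ to transport everything back to $\mu^L,\nu^L$, whereas the paper leaves that factor in the form $\mathrm{Hom}_G(E_{\mu^R}^\vee\otimes E_{\nu^R}^\vee\otimes\mathrm{Sym}^{nj-n+2n\kappa-k_\eta},\det^{j-1})$ and reads off the same inequalities directly; the two computations are equivalent line by line.
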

\bp
For (i), note that
\begin{eqnarray*}
\mathrm{Hom}_{G_{\bbC}}(F_{\mu}^{\vee}\otimes F_{\nu}^{\vee}\otimes V_j, \bbC) &=& \mathrm{Hom}_{G_{\bbC}}(V_j, F_{\mu}\otimes F_{\nu})\\
&=& \mathrm{Hom}_{G}(\det^{-j}  \otimes \mathrm{Sym}^{nj+k_{\eta}}, E_{\mu^L}\otimes E_{\nu^L})\\
&\times&  \mathrm{Hom}_{G}(\mathrm{Sym}^{nj-n+2n\kappa-k_{\eta}}\otimes \det^{1-j}, E_{\mu^R}\otimes E_{\nu^R})\\
&=& \mathrm{Hom}_{G}(\det^{-j}, E_{\mu^L}\otimes E_{\nu^L} \otimes  \mathrm{Sym}^{-nj-k_{\eta}})\\
&\times&  \mathrm{Hom}_{G}(E_{\mu^R}^{\vee}\otimes E_{\nu^R}^{\vee}\otimes \mathrm{Sym}^{nj-n+2n\kappa-k_{\eta}}, \det^{j-1}).
\end{eqnarray*}
Here $E_{\lambda}$ denotes the representation of $GL(n, \bbC)$ with highest weight $\lambda$.
By Proposition \ref{prop-Gre}, one has that  $\mathrm{Hom}_{G}(\det^{-j}, E_{\mu^L}\otimes E_{\nu^L} \otimes  \mathrm{Sym}^{-nj-k_{\eta}})$ is non-vanishing if and only if
$$
\max_{1\leq i\leq n}\{\mu_i + \nu_{n+1-i}\}\leq -j \leq \min_{1\leq i\leq n-1}\{\mu_i + \nu_{n-i}\};
$$
and  that $\mathrm{Hom}_{G}(E_{\mu^R}^{\vee}\otimes E_{\nu^R}^{\vee}\otimes \mathrm{Sym}^{nj-n+2n\kappa-k_{\eta}}, \det^{j-1})$ is non-vanishing if and only if
$$
\max_{1\leq i\leq n}\{\mu_i + \nu_{n+1-i}\}-2\kappa\leq j-1 \leq \min_{1\leq i\leq n-1}\{\mu_i + \nu_{n-i}\}-2\kappa.
$$
Thus one arrives at \eqref{cond-b}, as desired.

The proof for (ii) is similar, we omit it here.
\ep

\begin{lemma}\label{lemma-compare} \emph{(i)} Suppose that \eqref{cond-b} has solutions, then they coincide with those to \eqref{critical-places}.

\noindent\emph{(ii)} Suppose that \eqref{cond-c} has solutions, then they coincide with those to \eqref{critical-places}.
\end{lemma}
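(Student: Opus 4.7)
The plan is to recenter the problem via the substitution $u := j+\kappa-1/2$, so that both halves of \eqref{cond-b} and the critical-place condition \eqref{critical-places} become symmetric about $u=0$, and then to evaluate $c_{\mu, \nu}$ through a level-by-level analysis indexed by $k := (n+1)-(i+j)$.

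For part (i), set $A := M_{\mu, \nu}^{n+1}-\kappa+1/2$ and $B := m_{\mu, \nu}^n-\kappa+1/2$. In the coordinate $u$, condition \eqref{cond-b} becomes $u\in[-B,-A]\cap[A,B]$, while \eqref{critical-places} becomes $u\in[-(c_{\mu, \nu}-1/2),\,c_{\mu, \nu}-1/2]$. Non-emptiness of the two-interval intersection forces $A\le 0\le B$, equivalently $M_{\mu, \nu}^{n+1}\le\kappa-1/2\le m_{\mu, \nu}^n$, and identifies the intersection as $[-D,D]$ with $D=\min(-A,B)$ (the cases $A+B\ge 0$ and $A+B\le 0$ are handled uniformly). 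What remains is to show $c_{\mu, \nu}=D+1/2$.

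To compute $c_{\mu, \nu}$, I would group the pairs $(i,j)$ in \eqref{cmunu} by their level $k$, writing each summand as $|\mu_i+\nu_{n+1-k-i}-\kappa+k|$. A preliminary monotonicity lemma---that $M_{\mu, \nu}^\ell := \max_{i+j=\ell}(\mu_i+\nu_j)$ and $m_{\mu, \nu}^\ell := \min_{i+j=\ell}(\mu_i+\nu_j)$ are weakly decreasing in $\ell$---follows at once from the decreasing orderings of $\mu$ and $\nu$ via the neighbour-pair comparison $(i,j)\leftrightarrow(i-1,j)$. Combined with the derived bounds $M_{\mu, \nu}^{n+1}\le\kappa-1/2$ and $m_{\mu, \nu}^n\ge\kappa-1/2$, this shows that at level $k=0$ every value is $\le-1/2$ with minimum $|\cdot|$ equal to $\kappa-M_{\mu, \nu}^{n+1}$; at level $k=1$ every value is $\ge 1/2$ with minimum $m_{\mu, \nu}^n-\kappa+1$; and at every other level $k\notin\{0,1\}$ the minimum is at least $3/2$. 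Consequently $c_{\mu, \nu}=\min(\kappa-M_{\mu, \nu}^{n+1},\,m_{\mu, \nu}^n-\kappa+1)=D+1/2$, so \eqref{cond-b} and \eqref{critical-places} cut out the same subset of $\bbZ$.

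Part (ii) proceeds by the identical scheme with $(M_{\mu, \nu}^{n+1},m_{\mu, \nu}^n)$ replaced by $(M_{\mu, \nu}^{n+2},m_{\mu, \nu}^{n+1})$; this time the anchor levels governing $c_{\mu, \nu}$ are $k=0$ and $k=-1$. The main obstacle throughout is the level-by-level estimate: one must carefully track how the admissible range of $(i,j)$ shrinks off the anchor levels and how the sign of $\mu_i+\nu_j-\kappa+k$ behaves there, so that no auxiliary level (in particular $k=\pm 1$ in (i), or $k=\pm 1$ in (ii)) can undercut the two distinguished contributions. Once this monotonicity-based estimate is secured, the matching of the two intervals is an immediate computation.
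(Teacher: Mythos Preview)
Your proof is correct and follows essentially the same approach as the paper: both derive the non-emptiness condition $M_{\mu,\nu}^{n+1}\le\kappa-\tfrac12\le m_{\mu,\nu}^n$, then use the monotonicity of $\mu_i+\nu_j$ in $i+j$ to show that the minimum in the definition of $c_{\mu,\nu}$ is attained on the two anchor diagonals $i+j=n$ and $i+j=n+1$, yielding $c_{\mu,\nu}=\min\{m_{\mu,\nu}^n-\kappa+1,\ \kappa-M_{\mu,\nu}^{n+1}\}$. Your recentering via $u=j+\kappa-\tfrac12$ and the explicit level-by-level bookkeeping (with the ``$\ge 3/2$'' estimate off the anchors) are cosmetic elaborations of what the paper does in two lines by bounding the cases $i+j\le n$ and $i+j\ge n+1$ directly.
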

\bp
It is easy to see that \eqref{cond-b} has solutions if and only if
$$
M_{\mu, \nu}^{n+1}\leq m_{\mu, \nu}^{n}, \quad 1-2\kappa +M_{\mu, \nu}^{n+1}\leq -M_{\mu, \nu}^{n+1}, \quad -m_{\mu, \nu}^{n}\leq 1-2\kappa + m_{\mu, \nu}^{n},
$$
if and only if
\be\label{cond-b-simplified}
M_{\mu, \nu}^{n+1}\leq \kappa -\frac{1}{2}\leq m_{\mu, \nu}^{n}.
\ee
Now suppose that \eqref{cond-b-simplified} holds. On one hand, for any $i, j$ such that $i+j\leq n$, we have
$$
\mu_i+\nu_j-\kappa+(n+1)-(i+j)\geq m_{\mu, \nu}^{n}-\kappa +1\geq \frac{1}{2} > 0.
$$
On the other hand, for any $i, j$ such that $i+j\geq n+1$, we have
$$
\mu_i+\nu_j-\kappa+(n+1)-(i+j)\leq M_{\mu, \nu}^{n+1}-\kappa\leq -\frac{1}{2} < 0 .
$$
We conclude that
$$
c_{\mu, \nu}=
\min\left\{
m_{\mu, \nu}^{n}-\kappa +1, \kappa - M_{\mu, \nu}^{n+1}
\right\}.
$$
Then one sees that \eqref{critical-places} is just a reformulation of \eqref{cond-b}. This proves (i).

For (ii), it is easy to see that \eqref{cond-c} has solutions
if and only if
\be\label{cond-c-simplified}
M_{\mu, \nu}^{n+2}\leq \kappa + \frac{1}{2}\leq m_{\mu, \nu}^{n+1}.
\ee
The remaining discussion is similar to the previous case, we omit the details.
\ep
\begin{rmk}\label{rmk-lemma-compare}
Note that if \eqref{cond-b-simplified} holds, we have
$$
\frac{k_{\eta}}{n}=\frac{1}{n}\sum_{i=1}^{n}(\mu_i+\nu_{n+1-i})\leq  M_{\mu, \nu}^{n+1}\leq \kappa - \frac{1}{2}.
$$
Then one sees that ``$1-2\kappa +\frac{k_{\eta}}{n}\leq j\leq  -\frac{k_{\eta}}{n}$" holds since \eqref{cond-b-simplified} is equivalent to \eqref{cond-b}. Similarly, if \eqref{cond-c-simplified} holds, we have
$$
\frac{k_{\eta}}{n}=\frac{1}{n}\sum_{i=1}^{n}(\mu_i+\nu_{n+1-i})\geq  m_{\mu, \nu}^{n+1}\geq \kappa + \frac{1}{2}.
$$
Then one sees that ``$1-\frac{k_{\eta}}{n}\leq j\leq  -2\kappa + \frac{k_{\eta}}{n}$" holds since \eqref{cond-c-simplified} is equivalent to \eqref{cond-c}.
\end{rmk}

\begin{defi}\label{def-compatible} We say that two pure weights $\mu$ and $\nu$ are \emph{compatible} if \eqref{cond-b} or \eqref{cond-c} has solutions.
\end{defi}

By Lemma \ref{lemma-compare} and Remark \ref{rmk-lemma-compare}, we see that if $\mu$ and $\nu$ are compatible,  the solutions to \eqref{cond-b} or \eqref{cond-c} coincide with the critical places for $\pi_{\mu}\times \pi_{\nu}$.

\section{Finite dimensional representations}

\subsection{Cartan component and PRV component}
In this subsection, we let $\frg$ be a finite dimensional simple Lie algebra over $\bbC$. Let $\frh$ be a Cartan subalgebra of $\frg$. Fix a positive root system $\Delta^+(\frg, \frh)$. Then for any dominant integral weight $\lambda\in\frh^*$, we denote by $F_{\lambda}$ the finite dimensional irreducible representation of $\frg$ with highest weight $\lambda$. Let $W=W(\frg, \frh)$ be the Weyl group. Take $w_0$ as the longest element of $W$. Then $F_{\lambda}$ has lowest weight $w_0\lambda$.
Let $F_{\mu}$ be another irreducible representation of $\frg$ with highest weight $\lambda$.  Then it is well-known that $F_{\lambda+\mu}$ occurs with multiplicity one in $F_{\lambda}\otimes F_{\mu}$. We call $F_{\lambda+\mu}$ the \emph{Cartan component} of $F_{\lambda}\otimes F_{\mu}$. On the other hand, let $\sigma$ be the irreducible representation of $\frg$ with extremal weight $\lambda + w_0 \mu$. Then by Corollary 1 to Theorem 2 of \cite{PRV}, we have that $\sigma$ occurs with  multiplicity one in $F_{\lambda}\otimes F_{\mu}$. We call $\sigma$ the \emph{PRV component} of $F_{\lambda}\otimes F_{\mu}$. Here, ``PRV" stands for the initials of Parthasarathy, Rao, and Varadarajan.

\subsection{Minimal $K$-types}
Recall that $\K$ is a topological field  isomorphic to $\C$.
We put $G=GL(n, \K)$, $K=U(n)$ and $\tilde{K}=GU(n)$. Recall from \eqref{eq: comlexify} that $G_{\bbC}\cong GL(n, \bbC)\times GL(n, \bbC)$, and that $\frg=\frg\frl(n, \bbC)\times \frg\frl(n, \bbC)$ is the complexified Lie algebra of $G$.
Fix a Borel subgroup $B$ of $G$ consisting of the upper triangular matrices, and choose a positive root system for $G$ accordingly. We use
$E_{\lambda}$ to denote the irreducible $G$ representation with highest weight $\lambda$.
Weyl's unitary trick (see Proposition 7.15 of \cite{Kn02}) allows us to shift freely between $GL(n, \K)$ representations and $U(n)$ representations. Thus we shall abuse notation slightly, and denote by $E_{\lambda}$ the $K$-type  with highest weight $\lambda$ as well.

From now on, we always assume that $\mu$ and $\nu$ are compatible, and that $j\in\bbZ$ is a critical place for $\pi_{\mu}\times \pi_{\nu}$. Let us revisit the representation $I_j$ of $G$, which is defined in \eqref{Ij}. The following lemma determines the $K$-types of $I_j$.

\begin{lemma}\label{lemma-Ij-min-K} The $K$-types in $I_j$ are exactly those with highest weights $(m, 0, \dots, 0, 2n\kappa-2 k_{\eta}-m)$, where the integer $m\geq \max\{0, 2n\kappa-2 k_{\eta}\}$. Moreover, $I_j|_K$ is multiplicity free.
\end{lemma}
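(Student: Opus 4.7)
\medskip
\noindent\textbf{Proof plan for Lemma 4.2.}

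The plan is to pass to the compact picture of parabolic induction and then apply Frobenius reciprocity together with the classical branching rule for $U(n)\downarrow U(n-1)\times U(1)$.

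First I would use the Iwasawa decomposition $G=PK$ to identify $I_j\vert_K$ with the smoothly induced representation $\mathrm{Ind}_{P\cap K}^{K}(\chi)$, where $\chi$ is the restriction to $P\cap K$ of the character defining $I_j$. Since matrices in $P$ that are unitary are automatically block-diagonal, one has $P\cap K=U(n-1)\times U(1)$, embedded as block-diagonal matrices $\mathrm{diag}(A,z)$ with $A\in U(n-1)$ and $z\in U(1)$. On this subgroup $\delta_P$ is trivial (since $|\mspace{-3mu}\det p|_\K=|z|_\K=1$), so the normalizing factor and the $\delta_P^{j-1/2}$ part of $H_{j-1/2}$ drop out, and the only surviving piece is $\eta(p_{nn})^{-1}=\eta(z)^{-1}$. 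Using $\eta(z)=z^{k_\eta}\bar z^{\,2n\kappa-k_\eta}$ (as computed in the excerpt via \eqref{k-eta}) together with $\bar z=z^{-1}$ on $U(1)$, I obtain
\[
\chi\bigl(\mathrm{diag}(A,z)\bigr)=z^{\,2n\kappa-2k_\eta}.
\]
In particular, $\chi$ is trivial on the $U(n-1)$-factor.

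Next I would apply Frobenius reciprocity: for a dominant integral weight $\lambda=(\lambda_1\geq\cdots\geq\lambda_n)$ of $U(n)$,
\[
\dim\mathrm{Hom}_K\bigl(E_\lambda,\, I_j\vert_K\bigr)
=\dim\mathrm{Hom}_{U(n-1)\times U(1)}\bigl(E_\lambda\vert_{U(n-1)\times U(1)},\, \mathbf{1}\boxtimes z^{\,2n\kappa-2k_\eta}\bigr).
\]
By the classical Gelfand--Tsetlin branching rule, $E_\lambda$ restricted to $U(n-1)\times U(1)$ decomposes multiplicity-freely as a direct sum of $E_\sigma\boxtimes z^{|\lambda|-|\sigma|}$ where $\sigma=(\sigma_1,\dots,\sigma_{n-1})$ runs over integer sequences satisfying the interlacing $\lambda_i\geq\sigma_i\geq\lambda_{i+1}$. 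The requirement that $E_\sigma$ be the trivial representation of $U(n-1)$ forces $\sigma=0$, which via the interlacing collapses to $\lambda_1\geq 0$, $\lambda_2=\cdots=\lambda_{n-1}=0$, and $\lambda_n\leq 0$. The remaining condition on the $U(1)$-character is $\lambda_1+\lambda_n=2n\kappa-2k_\eta$.

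Finally, setting $m:=\lambda_1$, the surviving $K$-types are exactly those with highest weight $(m,0,\dots,0,\,2n\kappa-2k_\eta-m)$ subject to $m\geq 0$ and $2n\kappa-2k_\eta-m\leq 0$, i.e. $m\geq\max\{0,\,2n\kappa-2k_\eta\}$, and each such type occurs with multiplicity one because the interlacing sequence $\sigma=0$ is uniquely determined. This yields both claims of the lemma. The computation is essentially bookkeeping; the only place where a misstep is easy is in restricting $\eta$ to $U(1)$ and tracking signs so that the exponent $2n\kappa-2k_\eta$ emerges correctly, which is why I would carry out that restriction first and separately.
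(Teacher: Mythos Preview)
Your proposal is correct and follows essentially the same route as the paper: pass to $P\cap K=U(n-1)\times U(1)$, apply Frobenius reciprocity, and use the branching rule for $U(n)\downarrow U(n-1)\times U(1)$ to force $\lambda_2=\cdots=\lambda_{n-1}=0$ and $\lambda_1+\lambda_n=2n\kappa-2k_\eta$. Your write-up is in fact somewhat more explicit than the paper's---you spell out the Iwasawa step and the Gelfand--Tsetlin interlacing, which the paper leaves implicit---but the argument is the same.
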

\bp
Recall that $H_j(p)=|\mspace{-3mu}\det p|_{\K}^j \cdot  |p_{nn}|_{\K}^{-nj}\cdot \eta(p_{nn})^{-1}$, where $\eta(z)^{-1}=z^{-k_{\eta}}\overline{z}^{k_{\eta}-2n\kappa}$.
Let $V$ be any $K$-type. By Frobenius reciprocity, we have
\be\label{Frobenius-Ij}
\mathrm{Hom}_K (I_j, V)\cong \mathrm{Hom}_{P\cap K}(H_j|_{P\cap K}, V|_{P\cap K}).
\ee
Note that $P\cap K=U(n-1)\times U(1)$, and that $H_j|_{P\cap K}$ has weight $(0, \dots, 0, 2n\kappa-2k_{\eta})$. Suppose that $V$ has highest weight $(\lambda_1, \dots, \lambda_n)$. Then $V|_{P\cap K}$ contains $H_j|_{P\cap K}$ if and only if
$$
\lambda_1\geq 0\geq \lambda_2 \geq 0 \geq \cdots \geq 0\geq \lambda_{n-1}\geq 0\geq \lambda_n,
$$
and that $$\sum_{i=1}^{n}\lambda_i=2n\kappa-2k_{\eta}.$$
Therefore, $$\lambda_1=m, \lambda_2=\cdots=\lambda_{n-1}=0, \lambda_n=2n\kappa-2k_{\eta}-m,$$
where $m\geq \max\{0, 2n\kappa-2k_{\eta}\}$.
\ep

Denote the minimal $K$-type  of $I_j$ by $\sigma_j^+$. In case (b) of Proposition \ref{prop-BW}, we have $2n\kappa-2k_{\eta}\geq n$. Thus by Lemma \ref{lemma-Ij-min-K},
\be\label{sigma-j-b}
\sigma_j^+=E_{(2n\kappa-2k_{\eta}, 0, \cdots, 0)}.
\ee
In case (c) of Proposition \ref{prop-BW}, we have $2n\kappa-2k_{\eta}\leq -n$. Thus by Lemma \ref{lemma-Ij-min-K},
\be\label{sigma-j-c}
\sigma_j^+=E_{(0, \cdots, 0, 2n\kappa-2k_{\eta})}.
\ee

\begin{lemma}\label{lemma-Jmu-min-K} The minimal $K$-type, denoted by $\tau_{\mu}^{+}$, of $J_{\mu}$ is the one with highest weight $(2\mu_1-w+n-1, \dots, 2\mu_n-w-n+1)$. Moreover, $\tau_{\mu}^{+}$ occurs with multiplicity one in $J_{\mu}$.
\end{lemma}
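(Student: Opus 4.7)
The plan is to combine compact Frobenius reciprocity with a standard norm-minimization argument for $\|\cdot + 2\rho_K\|$. First, I would view $J_\mu$ as a principal series for the real reductive group $G=\GL_n(\bbC)$ with Iwasawa decomposition $G=KAN$, where $K=\U(n)$, $A$ is the group of positive diagonal matrices, and $N=U_n(\bbC)$. The corresponding real Borel is $MAN$ with $M=T\cap K$ the diagonal unitary torus, and $B_n(\bbC) = T_n(\bbC)N = MAN$. Restricting the inducing character $z\mapsto \prod_i z_{ii}^{a_i}\overline{z}_{ii}^{b_i}$ to $M$ (where $\overline{z}_{ii}=z_{ii}^{-1}$) yields the $M$-weight
$$\lambda \;:=\; (a_i-b_i)_{i=1}^n \;=\; \bigl(2\mu_i+(n+1-2i)-w\bigr)_{i=1}^n,$$
which is precisely the highest weight claimed in the lemma.

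Second, I would use the standard identification (from the Iwasawa decomposition) $J_\mu|_K \cong \Ind_M^K(\chi|_M)$, which is unaffected by whether the induction is normalized or not since $\delta_B$ is trivial on $M$. Compact Frobenius reciprocity then gives, for any $K$-type $E_\xi$,
$$\dim \Hom_K(E_\xi,J_\mu) \;=\; \dim\Hom_M(E_\xi|_M,\chi|_M)\;=\; \dim (E_\xi)_\lambda,$$
the dimension of the $\lambda$-weight space of $E_\xi$. Since $\mu_1\geq\cdots\geq\mu_n$, one checks $\lambda_i-\lambda_{i+1}=2(\mu_i-\mu_{i+1})+2\geq 2>0$, so $\lambda$ is already strictly dominant. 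In particular $E_\lambda$ exists, and $\lambda$ occurs in $E_\lambda$ with multiplicity one as the highest weight. Hence $E_\lambda$ appears in $J_\mu$ with multiplicity exactly one.

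Third, to identify $E_\lambda$ as the minimal $K$-type in Vogan's sense, note that every $K$-type $E_\xi$ occurring in $J_\mu$ has $\lambda$ among its weights, forcing $\xi-\lambda \in Q^+$ (the nonnegative root cone of $K$). Writing $\xi=\lambda+\sum_{\alpha>0} c_\alpha\alpha$ with $c_\alpha\geq 0$, we get
$$\|\xi+2\rho_K\|^2-\|\lambda+2\rho_K\|^2 \;=\; 2\Bigl\langle \lambda+2\rho_K,\,\textstyle\sum c_\alpha\alpha\Bigr\rangle \;+\; \Bigl\|\textstyle\sum c_\alpha\alpha\Bigr\|^2.$$
Because $\lambda$ is dominant and $2\rho_K$ is strictly dominant, every pairing $\langle\lambda+2\rho_K,\alpha\rangle$ is nonnegative, so the right-hand side is nonnegative and vanishes only when $\xi=\lambda$. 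Thus $E_\lambda$ uniquely minimizes $\|\cdot+2\rho_K\|^2$ among the $K$-types of $J_\mu$, so $\tau_\mu^+=E_\lambda$ as claimed.

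I do not expect a serious obstacle. The only delicate bookkeeping is aligning the complex Borel $B_n(\bbC)$ with the real Iwasawa data $MAN$ and correctly tracking $z^a\overline{z}^b|_{|z|=1}=z^{a-b}$; once this is handled, everything reduces to the Frobenius step and a short convexity inequality.
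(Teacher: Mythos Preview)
Your proposal is correct and follows exactly the approach the paper indicates: the paper's proof reads in full ``This lemma is also deduced from the Frobenius reciprocity, we omit the details,'' and you have supplied precisely those details, together with a clean Vogan-norm argument for minimality that the paper does not spell out.
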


This lemma is also deduced from the Frobenius reciprocity, we omit the details.

\subsection{Analysis of the multiplicity of certain $GL(n, \K)$ representations}
 Let $\tilde{\frp}$ be the Lie algebra of traceless matrices $M\in\frg$ such that $\overline{M}^{t}=M$. Then we have that $$\frg/\tilde{\frk}\cong \tilde{\frp}$$ as $K$ representations.

Let $V$ be the standard representation of $G$, and let $V^{\vee}$ be its contragredient. Let $e_1, \dots, e_n$ be the standard basis of $V$, and let $e_1^*, \dots, e_n^*$ be the corresponding dual dual basis. Then it is easy to check that the vector $e_{ij}:=e_i\otimes e_j^*$ has weight $\epsilon_i-\epsilon_j$. Here $\epsilon_i=(0, \dots, 1, \dots, 0)$, where the unique 1 is the $i$-th entry. Now as proved in Proposition 6 of \cite{Gre},
\be\label{V-Vcheck}
V\otimes V^{\vee}\cong \bbC\oplus \tilde{\frp}
\ee
and as $K$ representations,
\be\label{p-tilde}
\tilde{\frp}\cong E_{(1, 0, \dots, 0, -1)}.
\ee
Indeed, the vector $\sum_{i=1}^{n} e_{ii}$ generates the trivial representation $\bbC$. On the other hand, one can check that  $e_{1n}$ is a highest weight vector, and that the $\dim E_{(1, 0, \dots, 0, -1)} = n^2-1$. This verifies \eqref{V-Vcheck} as well as \eqref{p-tilde}. Let us fix a basis consisting of weight vectors of $\tilde{\frp}$:
\be\label{basis-p-tilde}
e_{ij}, \, 1\leq i\neq j \leq n;\, e_{11}-e_{kk}, \, 2\leq k\leq n.
\ee

\begin{lemma}\label{lemma-mult-one-sigma-n-wedge}
The $GL(n, \K)$ representation $E_{(n-1, -1, \dots, -1)}$ and its contragredient both occur with multiplicity one in $\wedge^{n-1}(\frg/\tilde{\frk})$.
\end{lemma}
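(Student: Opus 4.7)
The plan is to exhibit explicit highest weight vectors in $\wedge^{n-1}(\tilde{\frp})$ for each of the two $K$-types in question, and then verify that the corresponding weight spaces are one-dimensional, forcing the overall multiplicities to be one. Throughout, I use the identification $\frg/\tilde{\frk}\cong\tilde{\frp}$ and the weight basis listed in \eqref{basis-p-tilde}.

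First, I would set
\[
v := e_{12}\wedge e_{13}\wedge\cdots\wedge e_{1n}, \qquad v^{\vee}:= e_{1n}\wedge e_{2n}\wedge\cdots\wedge e_{n-1,n}.
\]
The weight of $v$ is $\sum_{j=2}^n(\epsilon_1-\epsilon_j)=(n-1,-1,\ldots,-1)$, and that of $v^{\vee}$ is $\sum_{i=1}^{n-1}(\epsilon_i-\epsilon_n)=(1,\ldots,1,-(n-1))$. For a raising operator $E_{ab}$ with $a<b$, the commutator formula $[E_{ab},e_{ij}]=\delta_{bi}e_{aj}-\delta_{ja}e_{ib}$ specializes (since $1<a$ in the first case and $b<n$ in the second) to $E_{ab}\cdot e_{1j}=-\delta_{ja}e_{1b}$ and $E_{ab}\cdot e_{in}=\delta_{bi}e_{an}$. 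Applying the Leibniz rule, a nonzero term in $E_{ab}\cdot v$ must substitute $e_{1b}$ into a wedge that already contains $e_{1b}$, and likewise for $v^{\vee}$; hence $E_{ab}\cdot v=0$ and $E_{ab}\cdot v^{\vee}=0$ for every simple raising operator. So $v$ and $v^{\vee}$ are highest weight vectors, and they generate copies of $E_{(n-1,-1,\ldots,-1)}$ and $E_{(1,\ldots,1,-(n-1))}$ respectively inside $\wedge^{n-1}(\tilde{\frp})$.

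Next, I would bound the multiplicities above by a weight-space count. Any decomposition of the weight $(n-1,-1,\ldots,-1)$ as a sum of $n-1$ distinct basis vectors from \eqref{basis-p-tilde} is described by indicators $c_{ij}\in\{0,1\}$ for the root vectors $e_{ij}$ together with a choice of some zero weight vectors. Balancing the $\epsilon_1$-coefficient gives
\[
\sum_{j=2}^{n} c_{1j}\;-\;\sum_{i=2}^{n} c_{i1}\;=\;n-1.
\]
Since the left-hand side is at most $n-1$, equality forces $c_{1j}=1$ for every $j\ge 2$ and $c_{i1}=0$ for every $i\ge 2$. These already fill the $n-1$ available slots, so no zero weight vector can appear and all remaining $c_{ij}$ vanish; the $\epsilon_j$-coefficients ($j\ge 2$) then automatically equal $-1$. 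Hence the $(n-1,-1,\ldots,-1)$-weight space of $\wedge^{n-1}(\tilde{\frp})$ is one-dimensional, spanned by $v$. An entirely parallel argument balancing the $\epsilon_n$-coefficient shows that the $(1,\ldots,1,-(n-1))$-weight space is one-dimensional, spanned by $v^{\vee}$.

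To conclude, since both target weights are dominant, every irreducible $K$-summand of $\wedge^{n-1}(\tilde{\frp})$ containing such a weight contributes at least one dimension to the corresponding weight space. The above counts say there is only one such dimension, already exhausted by the single highest weight vector, so each of $E_{(n-1,-1,\ldots,-1)}$ and its contragredient occurs with multiplicity exactly one, and no other irreducible $K$-type containing these dominant weights can appear. The delicate point in the plan is the saturation step in the weight-space count: one has to rule out mixed decompositions using zero weight vectors from the Cartan part of $\tilde{\frp}$, and this is exactly what the extremal $\epsilon_1$- and $\epsilon_n$-coefficients force.
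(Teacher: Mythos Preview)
Your proof is correct and follows essentially the same approach as the paper: both arguments produce the explicit highest weight vector $e_{12}\wedge\cdots\wedge e_{1n}$ and then show that the weight space for $(n-1,-1,\dots,-1)$ in $\wedge^{n-1}\tilde{\frp}$ is one-dimensional by checking that no other wedge of $n-1$ distinct basis vectors from \eqref{basis-p-tilde} can have that weight. The only real difference is that the paper handles the contragredient case in one line by invoking the self-duality of $\tilde{\frp}$, whereas you repeat the argument with the explicit vector $v^{\vee}=e_{1n}\wedge\cdots\wedge e_{n-1,n}$ and an $\epsilon_n$-coefficient count; both routes are fine.
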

\bp
Firstly, it is direct to check that
\be\label{vector-n-1}
e_{12}\wedge \cdots \wedge e_{1n}\in\wedge^{n-1}\tilde{\frp}
\ee
is a highest weight vector with weight $(n-1, -1, \dots, -1)$.
Thus $E_{(n-1, -1, \dots, -1)}$ occurs  in $\wedge^{n-1}\tilde{\frp}\cong \wedge^{n-1}(\frg/\tilde{\frk})$.

Now suppose that $v_0$ is the highest weight vector of an occurrence of $E_{(n-1, -1, \dots, -1)}$ in $\wedge^{n-1}\tilde{\frp}$. Take an arbitrary linear component of $v_0$, we may and we will assume that it has the form $A_1\wedge \cdots \wedge A_{n-1}$, where each $A_i$ is from \eqref{basis-p-tilde}.  Observe that there is no repetitions among these $A_i$, and all of them are weight vectors. We collect the corresponding \emph{non-zero} weights as  $\alpha_1, \dots, \alpha_r$, which must be pairwise distinct. We have $r\leq n-1$, and
$$
\alpha_1 + \dots + \alpha_r=(n-1, -1, \dots, -1).
$$
One sees that this equation has solution if and only if $r=n-1$ and
$$\{\alpha_1, \dots, \alpha_{n-1}\}=\{\epsilon_1-\epsilon_2, \dots, \epsilon_1-\epsilon_n\}.$$
Therefore, $A_1\wedge \cdots \wedge A_{n-1}$ is a scalar multiple of \eqref{vector-n-1}. Then so is $v_0$. Thus $E_{(n-1, -1, \dots, -1)}$ occurs with multiplicity one in $\wedge^{n-1}\tilde{\frp}$.

Since $\tilde{\frp}$ is self-dual as a $G$ representation, we conclude immediately that its contragredient representation $E_{(1, \dots, 1, 1-n)}$ occurs exactly once in
$\wedge^{n-1}\tilde{\frp}$ as well.
\ep
\begin{rmk}\label{rmk-lemma-mult-one-sigma-n}
It is not hard to prove that a vector of the following form occurs in the unique realization of
$E_{(n-1, 1, \dots,  1)}$ in $\wedge^{n-1}(\frg/\tilde{\frk})$:
\be\label{vector-sigma-n}
(e_{11}-e_{22})\wedge \cdots \wedge (e_{11}-e_{nn}) + \mbox{other terms},
\ee
where each linear component in ``other terms" is different from the leading one.
\end{rmk}

 Recall that the $G_{\bbC}$ representation $F_{\mu}$ has highest weight $(\mu_L, \mu_R)$. Define the $K$-type
\be\label{tau-mu}
\tau_{\mu} := E_{\mu_L - w_0\mu_R}=E_{(2\mu_1-w, \dots, 2\mu_n-w)},
\ee
where $w_0$ is the longest element of $S_n$.
We define $\tau_{\nu}$ similarly. Recall that the minimal $K$-type of $J_{\mu}$ is $\tau_{\mu}^{+}$. We define $\tau_n$ to be the PRV-component of  $\tau_{\mu}^{\vee}\otimes\tau_{\mu}^{+}$. By Lemma \ref{lemma-Jmu-min-K} and \eqref{tau-mu}, we have
\be\label{tau-n}
\tau_n:=E_{(n-1, n-3, \dots, 1-n)}.
\ee
In particular, it is independent of $\mu$. Thus PRV-component of  $\tau_{\nu}^{\vee}\otimes\tau_{\nu}^{+}$ is $\tau_n$ as well.

\begin{lemma}\label{lemma-mult-one-tau-n}
The $K$-type $\tau_n$  occurs with multiplicity one in $\wedge^{b_n}(\frg/\tilde{\frk})$.
\end{lemma}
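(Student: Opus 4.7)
The plan is to exhibit an explicit highest weight vector of weight $(n-1, n-3, \ldots, 1-n)$ in $\wedge^{b_n}(\frg/\tilde\frk) \cong \wedge^{b_n}(\tilde\frp)$ and then show directly that this weight space is one-dimensional, which forces the multiplicity to be one.

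The key observation is that
\[
(n-1, n-3, \ldots, 1-n) \;=\; \sum_{1 \le i < j \le n}(\epsilon_i - \epsilon_j) \;=\; 2\rho,
\]
the sum of the positive roots of $\frsl(n,\bbC)$, and that $b_n$ coincides with the number of positive roots. A natural candidate for a highest weight vector is
\[
v_0 \;:=\; \bigwedge_{1 \le i < j \le n} e_{ij},
\]
for any fixed ordering of the pairs $(i,j)$. This lives in $\wedge^{b_n}(\tilde\frp)$ and has weight $2\rho$. To verify that $v_0$ is annihilated by every simple root vector $E_{k,k+1}$, I would use $E_{k,k+1} \cdot e_{ij} = \delta_{k+1, i}\, e_{kj} - \delta_{j, k}\, e_{i, k+1}$ (the same formula that appears just before \eqref{V-Vcheck}): applying the derivation factor-by-factor, each resulting term either vanishes, or replaces $e_{ij}$ by a vector already present among the remaining factors of $v_0$, so the wedge collapses.

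For the multiplicity-one statement I would analyze the $2\rho$-weight space of $\wedge^{b_n}(\tilde\frp)$ using the basis \eqref{basis-p-tilde}. Every weight-$2\rho$ vector is a linear combination of pure wedges of $b_n$ distinct basis vectors. If such a wedge uses $c$ of the Cartan generators $e_{11} - e_{kk}$ (weight zero) and $b_n - c$ root vectors $e_{ij}$ with $i \neq j$, let $S \subset \Delta$ denote the set of roots that occur. Then $\sum_{\alpha \in S} \alpha = 2\rho = \sum_{\beta \in \Delta^+} \beta$, which rearranges to
\[
\sum_{\beta \in \Delta^+ \setminus S} \beta \;+ \sum_{\alpha \in S \cap (-\Delta^+)}(-\alpha) \;=\; 0.
\]
Both summands are non-negative integer combinations of the simple roots of type $A_{n-1}$, hence each must vanish. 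Therefore $S = \Delta^+$ and $c = 0$, showing that the $2\rho$-weight space is spanned by $v_0$ alone. Since the highest weight space of $\tau_n$ is one-dimensional, this forces the multiplicity of $\tau_n$ in $\wedge^{b_n}(\tilde\frp)$ to be exactly one.

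The only non-routine step is the combinatorial identity forcing $S = \Delta^+$; it rests on the elementary fact that a nontrivial sum of positive roots in type $A$ cannot vanish, and I expect this to be the main content of the proof. The highest-weight-vector check and the passage from dimension of the top weight space to multiplicity are both standard bookkeeping, and the argument closely mirrors the approach used for Lemma \ref{lemma-mult-one-sigma-n-wedge}.
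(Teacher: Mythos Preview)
Your proposal is correct and follows essentially the same approach as the paper: both exhibit the explicit wedge $\bigwedge_{i<j} e_{ij}$ as a highest weight vector of weight $2\rho$, and both deduce multiplicity one by showing that the $2\rho$-weight space of $\wedge^{b_n}\tilde\frp$ is one-dimensional via the observation that a subset of roots summing to $2\rho$ must equal $\Delta^+$. Your rearrangement $\sum_{\beta\in\Delta^+\setminus S}\beta + \sum_{\alpha\in S\cap(-\Delta^+)}(-\alpha)=0$ makes explicit the step the paper leaves as an assertion, but otherwise the arguments coincide.
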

\bp
Recall that $b_n=\frac{n(n-1)}{2}$.
Firstly, it is direct to check that
\be\label{vector-bn}
e_{12}\wedge \cdots \wedge e_{1n}\wedge e_{23}\wedge \cdots\wedge e_{2n}\wedge \cdots \wedge e_{n-1, n} \in\wedge^{b_{n}}\tilde{\frp}
\ee
is a highest weight vector with weight $(n-1, n-3, \dots, 3-n, 1-n)$.
Thus $\tau_n$ occurs  in $\wedge^{b_n}\tilde{\frp}\cong \wedge^{b_n}(\frg/\tilde{\frk})$.

Now suppose that $v_0$ is the highest weight vector of an occurrence of $\tau_n$ in $\wedge^{b_n}\tilde{\frp}$. Take an arbitrary linear component of $v_0$, we may and we will assume that it has the form $A_1\wedge \cdots \wedge A_{b_n}$, where each $A_i$ is from \eqref{basis-p-tilde}.  Observe that there is no repetitions among those $A_i$, and all of them are weight vectors. We collect the corresponding \emph{non-zero} weights as  $\alpha_1, \dots, \alpha_r$, which must be pairwise distinct. We have $r\leq b_n$, and
$$
\alpha_1 + \dots + \alpha_r=(n-1, n-3, \dots, 3-n, 1-n).
$$
Note that the RHS is the sum of all the positive roots. Thus this equation has solution if and only if $r=b_n$ and
$$\{\alpha_1, \dots, \alpha_{b_n}\}=\{\epsilon_i-\epsilon_j\mid 1\leq i<j \leq n\}.$$
Therefore, $A_1\wedge \cdots \wedge A_{b_n}$ is a scalar multiple of \eqref{vector-bn}. Then so is $v_0$. We conclude that $\tau_n$ occurs with multiplicity one in $\wedge^{b_n}\tilde{\frp}$.
\ep
\begin{rmk}\label{rmk-lemma-mult-one-tau-n}
One sees easily that the vector $e_{21}\wedge \cdots \wedge e_{n1}\wedge e_{32}\wedge \cdots\wedge e_{n2}\wedge \cdots \wedge e_{n, n-1}
$ occurs in the unique realization of
$\tau_n$ in $\wedge^{b_n}(\frg/\tilde{\frk})$:

\end{rmk}

Let us come back to the setting of Proposition \ref{prop-BW}.
Recall that $\sigma_j^+$ is the minimal $K$-type of $I_j$. Denote the highest weight of the $G_{\bbC}$ representation $V_j$ by $(\lambda_L, \lambda_R)$, and define the $K$-type
\be\label{sigma-j}
\sigma_j := E_{\lambda_L - w_0\lambda_R},
\ee
Let $\sigma_n$ be the PRV component of $\sigma_j\otimes \sigma_j^+$.

\begin{lemma}\label{lemma-mult-one-sigma-n} In Proposition \ref{prop-BW}  (b) and (c),
the $K$-type $\sigma_n$ occurs with multiplicity one in $\wedge^{n-1}(\frg/\tilde{\frk})$.
\end{lemma}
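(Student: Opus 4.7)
The plan is to identify the $K$-type $\sigma_n$ explicitly in each of cases (b) and (c), and then recognize it as one of the two representations whose multiplicity in $\wedge^{n-1}(\frg/\tilde{\frk})$ was already computed in Lemma \ref{lemma-mult-one-sigma-n-wedge}.

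Consider case (b) of Proposition \ref{prop-BW}. The bounds $1 - 2\kappa + \frac{k_\eta}{n} \leq j \leq -\frac{k_\eta}{n}$ give $nj + k_\eta \leq 0$ and $nj - n - k_\eta + 2n\kappa \geq 0$. Using the sign-dependent convention for $\mathrm{Sym}^a$, the highest weight $(\lambda_L;\lambda_R)$ of $V_j$ is
$$
\lambda_L = (-j, \dots, -j, \, -j + nj + k_\eta), \qquad \lambda_R = (1 - j + nj - n - k_\eta + 2n\kappa, \, 1 - j, \dots, 1 - j).
$$
A direct subtraction produces $\lambda_L - w_0 \lambda_R = (-1, \dots, -1, \, 2k_\eta + n - 2n\kappa - 1)$, so by \eqref{sigma-j} we have $\sigma_j = E_{(-1, \dots, -1, \, 2k_\eta + n - 2n\kappa - 1)}$. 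Combined with $\sigma_j^+ = E_{(2n\kappa - 2k_\eta, 0, \dots, 0)}$ from \eqref{sigma-j-b}, the extremal weight of the PRV component is
$$
(-1, \dots, -1, \, 2k_\eta + n - 2n\kappa - 1) + (0, \dots, 0, \, 2n\kappa - 2k_\eta) = (-1, \dots, -1, \, n-1),
$$
whose dominant Weyl conjugate is $(n-1, -1, \dots, -1)$. Hence $\sigma_n = E_{(n-1, -1, \dots, -1)}$, and Lemma \ref{lemma-mult-one-sigma-n-wedge} yields the multiplicity one assertion.

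Case (c) is symmetric. The bounds now force $nj + k_\eta - n \geq 0$ and $nj + 2n\kappa - k_\eta \leq 0$. An analogous computation produces $\sigma_j = E_{(2k_\eta - 2n\kappa - n + 1, \, 1, \dots, 1)}$ via \eqref{sigma-j}. With $\sigma_j^+ = E_{(0, \dots, 0, \, 2n\kappa - 2k_\eta)}$ from \eqref{sigma-j-c}, the extremal weight of the PRV component becomes $(1-n, 1, \dots, 1)$, whose dominant conjugate is $(1, \dots, 1, 1-n)$. Therefore $\sigma_n = E_{(1, \dots, 1, 1-n)}$, the contragredient of the representation appearing in case (b), and Lemma \ref{lemma-mult-one-sigma-n-wedge} applies again.

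The only delicate point in executing this plan is to track the sign-dependent convention for $\mathrm{Sym}^a$ (highest weight concentrated in the first entry when $a \geq 0$, in the last entry when $a < 0$) correctly when reading off $(\lambda_L; \lambda_R)$ from $V_j$; once the correct highest weights of the tensor factors of $V_j$ are written down in each case, the weight-lattice arithmetic identifying $\sigma_n$ and the invocation of Lemma \ref{lemma-mult-one-sigma-n-wedge} are entirely mechanical.
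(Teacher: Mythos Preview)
Your proof is correct and follows essentially the same approach as the paper: in each case you compute the highest weight of $\sigma_j$ from $(\lambda_L;\lambda_R)$, form the PRV extremal weight with $\sigma_j^+$, identify $\sigma_n$ as $E_{(n-1,-1,\dots,-1)}$ in case~(b) and $E_{(1,\dots,1,1-n)}$ in case~(c), and then invoke Lemma~\ref{lemma-mult-one-sigma-n-wedge}. The only difference is that you spell out the derivation of the highest weight of $\sigma_j$ from the description of $V_j$ (including the sign checks on the $\mathrm{Sym}^a$ exponents), whereas the paper simply states the result.
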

\bp
In case (b) of Proposition \ref{prop-BW},
$\sigma_j^+$ is given by \eqref{sigma-j-b}. On the other hand,
$\sigma_j$ has highest weight $(-1, \cdots, -1, 2k_{\eta}-2n\kappa+n-1)$.
Thus the PRV component of $\sigma_j\otimes \sigma_j^+$ is $E_{(n-1, -1, \dots, -1)}$.

In case (c) of Proposition \ref{prop-BW}, $\sigma_j^+$ is given by \eqref{sigma-j-c}.  On the other hand, $\sigma_j$ has highest weight
$(2k_{\eta}-2n\kappa- n+1, 1, \cdots, 1, 1)$.
Thus the PRV component  of $\sigma_j\otimes \sigma_j^+$ is $E_{(1, \dots, 1, 1-n)}$.

Now in each case, the desired conclusion follows from Lemma \ref{lemma-mult-one-sigma-n-wedge}.
\ep

\subsection{Analysis of $\tau_n\otimes\tau_n\otimes\sigma_n$}
Let $\tau_n\boxtimes\tau_n\boxtimes\sigma_n$ be the outer tensor product of $\tau_n$, $\tau_n$ and $\sigma_n$. That is, it has the same underlying space as $\tau_n\otimes\tau_n\otimes\sigma_n$, but it is a representation of $K\times K\times K$.
When restricted to $K$ via the embedding $k\mapsto (k, k, k)$, it becomes the usual tensor product $\tau_n\otimes\tau_n\otimes\sigma_n$. Note that
$$
\frg/\tilde{\frk}\oplus \frg/\tilde{\frk}\oplus \frg/\tilde{\frk}
\cong \tilde{\frp}\oplus \tilde{\frp}\oplus \tilde{\frp}
$$
as $K\times K\times K$ representations. Given a vector $A\in \tilde{\frp}$, we denote by $A^1=(A, 0, 0)$, $A^2=(0, A, 0)$ and $A^3=(0, 0, A)$. Therefore,
\be\label{basis-p-tilde-triple}
e_{ij}^l, \, 1\leq i\neq j \leq n;\, e_{11}^l-e_{kk}^l, \, 2\leq k\leq n,
\ee
where $l=1, 2$ or $3$,
is a basis of $\tilde{\frp}\oplus \tilde{\frp}\oplus \tilde{\frp}$ consisting of weight vectors.

\begin{lemma}\label{lemma-mult-outer-tensor}
The representation $\tau_n\boxtimes\tau_n\boxtimes\sigma_n$ occurs with multiplicity one in $\wedge^{2b_n+c_n}(\frg/\tilde{\frk}\oplus \frg/\tilde{\frk}\oplus \frg/\tilde{\frk})$.
\end{lemma}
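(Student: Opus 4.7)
\bp
The plan is to use the K\"unneth decomposition of exterior algebras to reduce the problem to multiplicities in single exterior powers of $\tilde{\frp}$, and then to show that only the minimal degree triple $(b_n, b_n, c_n)$ contributes.

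As $K^3$-modules one has
\[
\wedge^{2b_n + c_n}\bigl(\tilde{\frp}^{\oplus 3}\bigr) \cong \bigoplus_{m_1 + m_2 + m_3 = 2b_n + c_n} \wedge^{m_1}\tilde{\frp} \otimes \wedge^{m_2}\tilde{\frp} \otimes \wedge^{m_3}\tilde{\frp},
\]
so the multiplicity of $\tau_n \boxtimes \tau_n \boxtimes \sigma_n$ in the left hand side equals
\[
\sum_{m_1 + m_2 + m_3 = 2b_n + c_n} [\wedge^{m_1}\tilde{\frp} : \tau_n] \cdot [\wedge^{m_2}\tilde{\frp} : \tau_n] \cdot [\wedge^{m_3}\tilde{\frp} : \sigma_n].
\]

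The central step I plan to carry out is a pair of vanishing bounds: $[\wedge^m \tilde{\frp} : \tau_n] = 0$ for $m < b_n$ and $[\wedge^m \tilde{\frp} : \sigma_n] = 0$ for $m < c_n$. In each case I would bound the multiplicity above by the dimension of the highest weight space in $\wedge^m \tilde{\frp}$, which counts the $m$-element subsets of the basis \eqref{basis-p-tilde} whose weights sum to the relevant highest weight. For $\tau_n$ with highest weight $2\rho = \sum_{\alpha \in \Delta^+}\alpha$, any set $S$ of distinct nonzero root vectors with $\sum S = 2\rho$ decomposes as $S_+ \sqcup (-T)$ with $S_+, T \subseteq \Delta^+$, and the equation rearranges to $\sum_{\alpha \in \Delta^+ \setminus S_+}\alpha + \sum_{\alpha \in T}\alpha = 0$; since a nonempty sum of positive roots cannot vanish, this forces $S_+ = \Delta^+$ and $T = \emptyset$, so $|S| \geq b_n$. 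For $\sigma_n$ in case (b) of Proposition \ref{prop-BW}, whose highest weight has $\epsilon_1$-coordinate $n-1$, a first-coordinate comparison shows that $S$ must contain all $n-1$ roots with $\epsilon_1$-coordinate $+1$, giving $|S| \geq c_n$; case (c) is handled symmetrically via the last coordinate.

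Combined with the identity $b_n + b_n + c_n = 2b_n + c_n$, these bounds leave $(m_1, m_2, m_3) = (b_n, b_n, c_n)$ as the unique triple that can contribute, and Lemmas \ref{lemma-mult-one-tau-n} and \ref{lemma-mult-one-sigma-n} give that each factor at this triple equals one. The total multiplicity is therefore one. The main obstacle I anticipate is the vanishing bound for $\tau_n$: one must rule out the possibility of mixing negative root vectors into $S$ and compensating with extra positive ones, and the $S_+ \sqcup (-T)$ decomposition reduces this exactly to the nonvanishing of sums of positive roots, which is the cleanest way I see to handle it.
\ep
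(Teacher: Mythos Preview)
Your proof is correct. The paper takes a closely related but more compressed route: it writes down an explicit highest weight vector in $\wedge^{2b_n+c_n}(\tilde{\frp}^{\oplus 3})$ and then, invoking the weight-subset counting from the proofs of Lemmas~\ref{lemma-mult-one-tau-n} and~\ref{lemma-mult-one-sigma-n}, asserts that any highest weight vector of type $\tau_n\boxtimes\tau_n\boxtimes\sigma_n$ must be a scalar multiple of it. Your organization via the K\"unneth decomposition $\wedge^{2b_n+c_n}(\tilde\frp^{\oplus 3})\cong\bigoplus_{m_1+m_2+m_3=2b_n+c_n}\wedge^{m_1}\tilde\frp\otimes\wedge^{m_2}\tilde\frp\otimes\wedge^{m_3}\tilde\frp$ makes explicit what the paper leaves implicit: one actually needs the low-degree vanishing $[\wedge^m\tilde\frp:\tau_n]=0$ for $m<b_n$ and $[\wedge^m\tilde\frp:\sigma_n]=0$ for $m<c_n$, not merely the multiplicity-one statements at $m=b_n$ and $m=c_n$. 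Your $S_+\sqcup(-T)$ argument reducing the $\tau_n$ bound to the nonvanishing of a nonempty sum of positive roots is a clean way to rule out cancellation from negative root vectors, and the first/last-coordinate count for $\sigma_n$ is exactly right. The two approaches rest on the same combinatorics; yours isolates the needed ingredients more transparently, while the paper's is shorter by deferring to earlier proofs.
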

\bp
We consider the case $\sigma_n=E_{(n-1, -1, \dots, -1)}$. The other case $\sigma_n=E_{( 1, \dots, 1, 1-n)}$ is similar.
Firstly, it is direct to check that
\be\label{vector-triple}
e_{12}^1\wedge  \cdots \wedge e_{n-1, n}^1
\wedge
e_{12}^2\wedge \cdots \wedge e_{n-1, n}^2
\wedge
(e_{11}^3-e_{22}^3)\wedge\cdots\wedge (e_{11}^3-e_{nn}^3)
\in\wedge^{2b_n+c_n}(\tilde{\frp}\oplus \tilde{\frp}\oplus \tilde{\frp})
\ee
is a highest weight vector with weight $$(n-1, \dots, 1-n; n-1, \dots, 1-n; n-1, -1, \dots, -1).$$
Thus $\tau_n\boxtimes\tau_n\boxtimes\sigma_n$ occurs  in $\wedge^{2b_n+c_n}(\tilde{\frp}\oplus \tilde{\frp}\oplus \tilde{\frp})$.

Now suppose that $v_0$ is the highest weight vector of an occurrence of $\tau_n\boxtimes\tau_n\boxtimes\sigma_n$ in $\wedge^{2b_n+c_n}(\tilde{\frp}\oplus \tilde{\frp}\oplus \tilde{\frp})$. Based on the proofs of Lemmas \ref{lemma-mult-one-tau-n} and \ref{lemma-mult-one-sigma-n}, it is now easy to see that $v_0$ is a scalar multiple of \eqref{vector-triple}. Thus $\tau_n\boxtimes\tau_n\boxtimes\sigma_n$ occurs with multiplicity one in $\wedge^{2b_n+c_n}(\tilde{\frp}\oplus \tilde{\frp}\oplus \tilde{\frp})$.
\ep

For simplicity, we put $K^3:=K\times K\times K$.
Using Lemma \ref{lemma-mult-outer-tensor}, we fix a nonzero element
\be
\eta_n\in \mathrm{Hom}_{K^3}(\wedge^{2b_n+c_n}(\frg/\tilde{\frk}\oplus \frg/\tilde{\frk}\oplus \frg/\tilde{\frk}), \tau_n\boxtimes\tau_n\boxtimes\sigma_n).
\ee
Write
$$\iota_n: \wedge^{2b_n+c_n}(\frg/\tilde{\frk}) \to \wedge^{2b_n+c_n}(\frg/\tilde{\frk}\oplus \frg/\tilde{\frk}\oplus \frg/\tilde{\frk})
$$
for the natural embedding.

\begin{lemma}\label{lemma-inv-dim-one-tau-n-tau-n-sigma-n}
We have  $\mathrm{dim}(\tau_n\otimes \tau_n\otimes \sigma_n)^K=1$, where $\sigma_n=E_{(n-1, -1, \dots, -1)}$ or $E_{( 1, \dots, 1, 1-n)}$.
\end{lemma}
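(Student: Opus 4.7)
The plan is to use the self-duality of $\tau_n$ to reduce the triple invariant to a single branching multiplicity, and then evaluate that multiplicity directly via Pieri's rule.

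First, observe that $-w_0\cdot(n-1,n-3,\ldots,1-n) = (n-1,n-3,\ldots,1-n)$, so the $K$-type $\tau_n = E_{(n-1,n-3,\ldots,1-n)}$ is self-dual. Writing
\[
(\tau_n\otimes\tau_n\otimes\sigma_n)^K = \Hom_K(\tau_n^\vee,\tau_n\otimes\sigma_n)
\]
and substituting $\tau_n^\vee = \tau_n$, we get $\dim(\tau_n\otimes\tau_n\otimes\sigma_n)^K = [\tau_n\otimes\sigma_n:\tau_n]$. Moreover, the two choices of $\sigma_n$ listed are mutually contragredient, and $\tau_n\otimes\tau_n$ is self-dual, so the two cases yield the same multiplicity; it suffices to treat case (b), so I take $\sigma_n = E_{(n-1,-1,\ldots,-1)}$.

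Next, identify $\sigma_n = \Sym^n V\otimes\det^{-1}$, where $V$ is the standard representation of $K$. Writing $\tau_n = S_\lambda\otimes\det^{-(n-1)}$ with $\lambda=(2n-2,2n-4,\ldots,2,0)$, we have
\[
\tau_n\otimes\sigma_n \;=\; (S_\lambda\otimes\Sym^n V)\otimes\det^{-n}.
\]
By Pieri's rule (Corollary 9.2.4 of \cite{GW}, already invoked in Proposition \ref{prop-Gre}),
\[
S_\lambda\otimes\Sym^n V \;=\; \bigoplus_{\mu}S_\mu,
\]
the sum ranging over partitions $\mu$ for which $\mu/\lambda$ is a horizontal $n$-strip, each occurring with multiplicity one.

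Tensoring with $\det^{-n}$, the summand $S_\mu\otimes\det^{-n} = E_{\mu - n(1,\ldots,1)}$ equals $\tau_n = E_{2\rho}$ precisely when $\mu = 2\rho + n(1,\ldots,1) = (2n-1,2n-3,\ldots,1)$. One verifies that this $\mu$ is an admissible Pieri summand: each difference $\mu_i-\lambda_i = 1$ (summing to $n$), and $\mu_{i+1} = 2n-2i-1 \le 2n-2i = \lambda_i$, so the added strip is horizontal. Since this $\mu$ is uniquely determined by the equation $\mu = 2\rho + n(1,\ldots,1)$, we conclude $[\tau_n\otimes\sigma_n:\tau_n] = 1$, which proves the lemma. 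The whole argument is short and avoids any combinatorial case enumeration; the only slightly subtle point is recognizing $\sigma_n$ as $\Sym^n V\otimes\det^{-1}$ (resp.\ $\Sym^n V^\vee\otimes\det$ in case (c)), after which Pieri's multiplicity-one statement delivers the conclusion at once.
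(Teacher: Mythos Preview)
Your proof is correct and follows essentially the same route as the paper, which simply says the lemma ``follows directly from Proposition~\ref{prop-Gre}'' (itself a consequence of Pieri's rule). The only cosmetic difference is that the paper would rewrite $(\tau_n\otimes\tau_n\otimes\sigma_n)^K$ as the multiplicity of $\det^1$ in $\tau_n\otimes\tau_n\otimes\Sym^n$ and plug the highest weight $(n-1,n-3,\ldots,1-n)$ for both $\lambda$ and $\mu$ into the inequalities of Proposition~\ref{prop-Gre}, whereas you first use self-duality of $\tau_n$ to reduce to $[\tau_n\otimes\sigma_n:\tau_n]$ and then invoke Pieri directly; the underlying combinatorics are identical.
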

This lemma follows directly from Proposition \ref{prop-Gre}, we omit the details.

\begin{lemma}\label{lemma-mult-one-tau-n-tau-n-sigma-n} The composition
\be\label{composition-iota-eta}
\begin{CD}
\wedge^{2b_n+c_n}(\frg/\tilde{\frk}) @> \iota_n > > \wedge^{2b_n+c_n}(\frg/\tilde{\frk}\oplus \frg/\tilde{\frk}\oplus \frg/\tilde{\frk})  @> \eta_n > > \tau_n\boxtimes\tau_n\boxtimes\sigma_n
\end{CD}
\ee
is nonzero. It is image
is equal to $(\tau_n\otimes \tau_n\otimes \sigma_n)^K$.
\end{lemma}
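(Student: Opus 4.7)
The plan is to evaluate the composition \eqref{composition-iota-eta} on a generator $\omega$ of the one-dimensional source $\wedge^{2b_n+c_n}(\frg/\tilde\frk)$ and show the result is nonzero. Since both $\iota_n$ and $\eta_n$ are $K$-equivariant (with $K$ acting diagonally on the target of $\iota_n$), and since the source is the trivial $K$-module (the top exterior power of the adjoint-type representation $\tilde{\frp}\cong E_{(1,0,\ldots,0,-1)}$ preserves a volume form), the image automatically lies in $(\tau_n\otimes\tau_n\otimes\sigma_n)^K$. This target is one-dimensional by Lemma~\ref{lemma-inv-dim-one-tau-n-tau-n-sigma-n}, so once nonvanishing is established the image-equals-invariants statement follows for free.

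For the nonvanishing, I use the K\"unneth-type decomposition
\[
\wedge^{2b_n+c_n}(\tilde{\frp}\oplus\tilde{\frp}\oplus\tilde{\frp})=\bigoplus_{p+q+r=2b_n+c_n}\wedge^p\tilde{\frp}\otimes\wedge^q\tilde{\frp}\otimes\wedge^r\tilde{\frp}.
\]
Lemmas~\ref{lemma-mult-one-tau-n} and \ref{lemma-mult-one-sigma-n-wedge} show that $\tau_n$ and $\sigma_n$ first appear in exterior degrees $b_n$ and $c_n$ respectively, so $\tau_n\boxtimes\tau_n\boxtimes\sigma_n$ lies entirely in the $(b_n,b_n,c_n)$-summand. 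Writing $\omega=\omega^+\wedge\omega^-\wedge\omega^0$, where $\omega^{\pm}$ is the wedge of all positive (resp.\ negative) root vectors and $\omega^0=\bigwedge_{k=2}^n(e_{11}-e_{kk})$, the weight-counting argument in the proof of Lemma~\ref{lemma-mult-one-tau-n} shows that the weight-$(\rho_{\mathrm{p}};-\rho_{\mathrm{p}};0)$ part of the $(b_n,b_n,c_n)$-summand of $\iota_n(\omega)$ is one-dimensional, spanned by the single term $\omega^+\otimes\omega^-\otimes\omega^0$, where $\rho_{\mathrm{p}}=(n-1,n-3,\ldots,1-n)$. The matching weight subspace of $\tau_n\otimes\tau_n\otimes\sigma_n$ is likewise one-dimensional, as $\pm\rho_{\mathrm{p}}$ occur with multiplicity one in $\tau_n$ and weight $0$ occurs with multiplicity one in $\sigma_n\cong\mathrm{Sym}^nV\otimes\det^{-1}$ (case (b); case (c) is contragredient). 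The problem reduces to showing that $\omega^+\otimes\omega^-\otimes\omega^0$ has nonzero image under $P_{\tau_n}\otimes P_{\tau_n}\otimes P_{\sigma_n}$.

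The first two factor projections are clear from the same weight argument: by Lemma~\ref{lemma-mult-one-tau-n}, $\omega^{\pm}$ is the unique weight-$(\pm\rho_{\mathrm{p}})$ element of $\wedge^{b_n}\tilde{\frp}$, and $\tau_n$ is the only irreducible summand of $\wedge^{b_n}\tilde{\frp}$ containing these weights, so $P_{\tau_n}(\omega^{\pm})$ is a nonzero multiple of the highest (resp.\ lowest) weight vector of $\tau_n$.

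The main obstacle is to prove that $P_{\sigma_n}(\omega^0)\neq 0$. My strategy is to compute the weight-$0$ vector $v_\sigma\in\sigma_n$ explicitly by applying the standard string of lowering operators reducing $(n-1,-1,\ldots,-1)$ to $(0,\ldots,0)$ (i.e., $e_{-\alpha_i}$ applied $n-i$ times for each simple root $\alpha_i=\epsilon_i-\epsilon_{i+1}$) to the highest weight vector $e_{12}\wedge\cdots\wedge e_{1n}\in\sigma_n$ exhibited in Lemma~\ref{lemma-mult-one-sigma-n-wedge}. The $n=3$ prototype yields
\[
v_\sigma=h_2\wedge h_3+e_{13}\wedge e_{31}-e_{12}\wedge e_{21}-e_{23}\wedge e_{32}
\]
(up to scalar), in which $\omega^0=h_2\wedge h_3$ appears with nonzero coefficient; an inductive extension on $n$ gives $v_\sigma=c\,\omega^0+(\text{wedges involving at least one opposite-root pair})$ with $c\neq 0$. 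Equipping $\wedge^{c_n}\tilde{\frp}$ with the positive-definite $K$-invariant Hermitian form inherited from that on $\tilde{\frp}$, one has $\langle h_k,e_{ij}\rangle=0$ for $i\neq j$, so $\omega^0$ is orthogonal to every wedge containing a root vector. Hence $\langle \omega^0,v_\sigma\rangle=\overline{c}\,\|\omega^0\|^2\neq 0$, and the orthogonal projection $P_{\sigma_n}$ forces $P_{\sigma_n}(\omega^0)\neq 0$, completing the proof.
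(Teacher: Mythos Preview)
Your argument is correct and follows essentially the same route as the paper's proof. Both interpret $\eta_n$ as the orthogonal projection (for a $K$-invariant Hermitian form on $\tilde\frp$) and reduce to the same core fact: that the diagonal vector $\omega^0=(e_{11}-e_{22})\wedge\cdots\wedge(e_{11}-e_{nn})$ has nonzero component in the copy of $\sigma_n\subset\wedge^{c_n}\tilde\frp$. The paper packages this as Remark~\ref{rmk-lemma-mult-one-sigma-n} (stated without proof) and then computes a single inner product $\langle\iota_n(u_0),w_0\rangle_\wedge$ directly, observing that the ``other terms'' in $w_0$ cannot contribute because any such term places a root vector $e_{ab}$ in the third copy while the same $e_{ab}$ is already forced into copy $1$ or $2$. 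You instead pass through the K\"unneth decomposition, isolate the weight-$(\rho_{\mathrm p};-\rho_{\mathrm p};0)$ piece, and treat the three factors separately; this is a cleaner bookkeeping of the same computation. Two small points: your sentence ``first appear in degrees $b_n$, $c_n$ \ldots\ so $\tau_n\boxtimes\tau_n\boxtimes\sigma_n$ lies entirely in the $(b_n,b_n,c_n)$-summand'' is not quite the right implication---what you need is Lemma~\ref{lemma-mult-outer-tensor} (multiplicity one) together with the fact that this summand already contains a copy. And your ``inductive extension on $n$'' for $P_{\sigma_n}(\omega^0)\neq 0$ is left as informal as the paper's own Remark~\ref{rmk-lemma-mult-one-sigma-n}; neither source spells out the general-$n$ verification.
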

\bp
We consider the case $\sigma_n=E_{(n-1, -1, \dots, -1)}$. The other case $\sigma_n=E_{( 1, \dots, 1, 1-n)}$ is similar.
Let us equip $\tilde{\frp}$ with a $K$-invariant positive definite Hermitian form $\langle\, , \,\rangle$ such that \eqref{basis-p-tilde} is an orthogonal basis. It induces a $K^3$-invariant positive definite Hermitian form $\langle\, , \,\rangle_{\wedge}$ on $\wedge^{2b_n+c_n}(\tilde{\frp}\oplus \tilde{\frp}\oplus \tilde{\frp})$.
Note that
\[\begin{CD}
\wedge^{2b_n+c_n}(\frg/\tilde{\frk}\oplus \frg/\tilde{\frk}\oplus \frg/\tilde{\frk})  @> \eta_n > > \tau_n\boxtimes\tau_n\boxtimes\sigma_n
\end{CD}
\]
is a scalar multiple of the orthogonal projection. By Remarks \ref{rmk-lemma-mult-one-sigma-n}  and \ref{rmk-lemma-mult-one-tau-n}, in the unique realization of $\tau_n\boxtimes\tau_n\boxtimes\sigma_n$ in $\wedge^{2b_n+c_n}(\tilde{\frp}\oplus \tilde{\frp}\oplus \tilde{\frp})$ one can find a vector of the form
$$
w_0 := v_0\wedge v_0^{\prime}\wedge ((e_{11}^3-e_{22}^3)\wedge \cdots \wedge (e_{11}^3-e_{nn}^3) + \mbox{other terms}),$$
where $v_0=e_{12}^1\wedge  \cdots \wedge e_{n-1, n}^1$, $v_0^{\prime}=e_{21}^2\wedge \cdots \wedge e_{n, n-1}^2$. Here each linear component in ``other terms" is different from $(e_{11}^3-e_{22}^3)\wedge \cdots \wedge (e_{11}^3-e_{nn}^3)$, and has the form
$A_1^3\wedge \cdots \wedge A_{n-1}^3$, where each $A_i$ is in \eqref{basis-p-tilde},
see \eqref{vector-sigma-n}. Put
$$
u_0:= e_{12} \wedge \cdots \wedge e_{n-1,n} \wedge e_{21} \wedge\cdots\wedge e_{n, n-1}\wedge
(e_{11}-e_{22})\wedge \cdots \wedge (e_{11}-e_{nn})\in\wedge^{2b_n+c_n}\tilde{\frp}.
$$
Then one sees easily that the ``other terms" has no contribution in $\langle\iota_n(u_0), w_0\rangle_{\wedge}$, and that the latter is nonzero. Thus $\eta_n \circ \iota_n$ is nonzero.

The image of the composition \eqref{composition-iota-eta} is a nonzero subspace of
$(\tau_n\otimes \tau_n\otimes \sigma_n)^K$. By Lemma
\ref{lemma-inv-dim-one-tau-n-tau-n-sigma-n}, the latter space is one dimensional. Therefore the second assertion follows.
\ep

\subsection{Analysis of $F_{\xi}^{\vee}$}
Recall from the introduction that
$$
F_{\xi}^{\vee} := F_{\mu}^{\vee}\otimes F_{\nu}^{\vee}\otimes V_j.
$$
As a $GL_n(\bbC)\times GL_n(\bbC)$ representation, $F_{\mu}^{\vee}\cong E_{\mu_L}^{\vee}\boxtimes E_{\mu_R}^{\vee}$. When restricted to $K$, we have $$F_{\mu}^{\vee}\cong E_{\mu_L}^{\vee}\otimes E_{\mu_R}.$$ Similarly, $F_{\nu}^{\vee}\cong E_{\nu_L}^{\vee}\otimes E_{\nu_R}$, and $V_j\cong E_{\lambda_L}\otimes E_{\lambda_R}^{\vee}$ as representations of $K$.  Recall the $K$-types $\tau_{\mu} = E_{\mu_L-w_0\mu_R}$, $\tau_{\nu} = E_{\nu_L-w_0\nu_R}$ and $\sigma_j = E_{\lambda_L-w_0\lambda_R}$.
Put
\be\label{tau-xi-check}
\tau_{\xi}^{\vee} := \tau_{\mu}^{\vee}\otimes \tau_{\nu}^{\vee}\otimes \sigma_j,
\ee
and
\be\label{tau-xi-plus}
\tau_{\xi}^{+} := \tau_{\mu}^{+}\otimes \tau_{\nu}^{+}\otimes \sigma_j^+.
\ee

\begin{lemma}\label{lemma-inv-dim-one-tau-xi-check}
We have  $\mathrm{dim}(\tau_{\xi}^{\vee})^K=1$.
\end{lemma}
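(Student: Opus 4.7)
The plan is to reduce $\dim(\tau_\xi^\vee)^K$ to a classical branching multiplicity to which Greni\'e's Proposition \ref{prop-Gre} applies directly. Since $K=U(n)$ and $\tau_\mu,\tau_\nu,\sigma_j$ are all irreducible $K$-types, Weyl's unitary trick identifies $\dim(\tau_\xi^\vee)^K$ with the multiplicity of the trivial representation of $GL(n,\bbC)$ in the triple tensor product $\tau_\mu^\vee\otimes\tau_\nu^\vee\otimes\sigma_j$.

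First I would record $\sigma_j$ in closed form. Using the proof of Lemma \ref{lemma-mult-one-sigma-n}, in case (b) we have $\sigma_j=E_{(-1,\ldots,-1,2k_\eta-2n\kappa+n-1)}$, equivalently $\sigma_j^\vee\cong\det\otimes\Sym^{m}$ with $m:=2n\kappa-2k_\eta-n\geq 0$ (nonnegativity follows from the range of $j$ in case (b)). In case (c) we have $\sigma_j\cong\det\otimes\Sym^{m'}$ with $m':=2k_\eta-2n\kappa-n\geq 0$, hence $\sigma_j^\vee\cong\det^{-1}\otimes\Sym^{-m'}$.

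Since the dimension of the space of $K$-invariants is preserved under passing to contragredients, the quantity we want equals $\dim\Hom_K(\bbC,\,\tau_\mu\otimes\tau_\nu\otimes\sigma_j^\vee)$. In case (b), absorbing the $\det$ twist shows this equals the multiplicity of $\det^{-1}$ in $\tau_\mu\otimes\tau_\nu\otimes\Sym^{m}$. In case (c), one additional contragredient flip is needed to keep the symmetric power nonnegative; after doing so, the dimension equals the multiplicity of $\det^{-1}$ in $\tau_\mu^\vee\otimes\tau_\nu^\vee\otimes\Sym^{m'}$. Either way we are asking for the multiplicity of $\det^{d}$ in $E_\lambda\otimes E_\mu\otimes\Sym^{a}$ with $d=-1$ and $a\in\bbN$ equal to $m$ or $m'$, which is exactly the situation of Proposition \ref{prop-Gre}.

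It remains to check Greni\'e's hypotheses. Using that $\tau_\mu$ has highest weight $(2\mu_i-w)_i$ and $\tau_\nu$ has highest weight $(2\nu_i-w')_i$ (and the weights of $\tau_\mu^\vee,\tau_\nu^\vee$ are obtained by a Weyl reflection), the normalization $a=nd-\sum$ becomes automatic from the definitions of $m$, $m'$ and $k_\eta$. The max-min inequality of Proposition \ref{prop-Gre} reduces, after rearranging with $w+w'=2\kappa$, to precisely \eqref{cond-b-simplified} in case (b) and to \eqref{cond-c-simplified} in case (c); both are guaranteed by the compatibility hypothesis. The multiplicity-one clause of Proposition \ref{prop-Gre} then gives the claim. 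The main hazard is purely bookkeeping: in case (c) one must chase $\det^{\pm 1}$ factors and contragredients through two flips to land in Greni\'e's setup with $a\in\bbN$, and one must correctly match the indices $n+1-i$ versus $n-i$ appearing in Proposition \ref{prop-Gre} with the indexing conventions used to define $M_{\mu,\nu}^{n+1}$, $m_{\mu,\nu}^{n}$, $M_{\mu,\nu}^{n+2}$, $m_{\mu,\nu}^{n+1}$.
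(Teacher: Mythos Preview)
Your proposal is correct and follows essentially the same approach as the paper: both reduce $\dim(\tau_\xi^\vee)^K$ to a multiplicity question of the form ``$\det^{-1}$ in $\tau\otimes\sigma\otimes\Sym^a$'' and then invoke Proposition \ref{prop-Gre}, with the max--min condition matching \eqref{cond-b-simplified} (respectively \eqref{cond-c-simplified}) via the compatibility hypothesis. The paper only writes out case (b); your sketch of case (c) is a welcome addition, and your remark about needing an extra contragredient flip there (so that the symmetric power exponent is in $\bbN$ as required by Proposition \ref{prop-Gre}) is exactly the right bookkeeping.
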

\bp
We only prove it for Proposition \ref{prop-BW} (b), where $\sigma_j=\det^{-1}\otimes \mbox{Sym}^{2k_{\eta}-2n\kappa+n}$. Thus
$$\mathrm{Hom}_{K}(\bbC, \tau_{\xi}^{\vee})\cong \mathrm{Hom}_{K}(\tau_{\mu}\otimes\tau_{\nu}\otimes\mbox{Sym}^{2n\kappa-2k_{\eta}-n},  \det^{-1}).$$
By Proposition \ref{prop-Gre}, the latter space is one dimensional if and only if  $2 M_{\mu, \nu}^{n+1} -2 \kappa \leq -1 \leq 2 m_{\mu, \nu}^n -2\kappa$. This is exactly \eqref{cond-b-simplified}.
\ep

\begin{lemma}\label{lemma-inv-dim-one-tau-xi-plus}
We have  $\mathrm{dim}(\tau_{\xi}^{+})^K=1$.
\end{lemma}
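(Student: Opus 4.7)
The plan is to mirror the proof of Lemma \ref{lemma-inv-dim-one-tau-xi-check}, reducing the computation of $(\tau_\xi^+)^K$ to an application of Greni\'e's Proposition \ref{prop-Gre}. The three ingredients I would assemble are: the highest weights $(2\mu_i - w + n+1-2i)_i$ and $(2\nu_i - w' + n+1-2i)_i$ of $\tau_\mu^+$ and $\tau_\nu^+$ from Lemma \ref{lemma-Jmu-min-K}, the explicit form of $\sigma_j^+$ given by \eqref{sigma-j-b} and \eqref{sigma-j-c}, and the numerical consequences $M_{\mu,\nu}^{n+1}\leq \kappa-\tfrac{1}{2}\leq m_{\mu,\nu}^n$ (resp. $M_{\mu,\nu}^{n+2}\leq \kappa+\tfrac{1}{2}\leq m_{\mu,\nu}^{n+1}$) of the compatibility hypothesis in case (b) (resp. (c)).

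First I would handle case (b) of Proposition \ref{prop-BW}, where $\sigma_j^+ = \mathrm{Sym}^{2n\kappa-2k_\eta}$ with positive exponent $2n\kappa-2k_\eta\geq n$. Applying Proposition \ref{prop-Gre} directly with $d=0$ and $a = 2n\kappa-2k_\eta$, the identity $a = nd - \sum_i(\lambda_i+\mu_i)$ is forced by $\sum_i(\mu_i+\nu_i)=k_\eta$ and $w+w'=2\kappa$; the two Pieri-type inequalities collapse to
\[
\mu_i + \nu_{n+1-i} \leq \kappa \quad (1\leq i\leq n) \quad \text{and} \quad \mu_i + \nu_{n-i} \geq \kappa - 1 \quad (1\leq i\leq n-1),
\]
both of which are strictly implied by \eqref{cond-b-simplified}. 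This gives multiplicity exactly one.

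For case (c), $\sigma_j^+$ is $E_{(0,\dots,0,2n\kappa-2k_\eta)}$ with $2n\kappa-2k_\eta\leq -n$, so I would first pass to contragredients:
\[
(\tau_\xi^+)^K \cong \mathrm{Hom}_K\bigl((\tau_\mu^+)^\vee \otimes (\tau_\nu^+)^\vee \otimes \mathrm{Sym}^{2k_\eta-2n\kappa},\,\mathbb{C}\bigr),
\]
so that the symmetric power now has positive exponent $2k_\eta-2n\kappa>0$. The contragredient highest weights are $(w-2\mu_{n+1-i}+n+1-2i)_i$ and $(w'-2\nu_{n+1-i}+n+1-2i)_i$; another application of Proposition \ref{prop-Gre} with $d=0$ reduces the two bracket inequalities to $m_{\mu,\nu}^{n+1}\geq \kappa$ and $M_{\mu,\nu}^{n+2}\leq \kappa+1$, both implied by \eqref{cond-c-simplified}.

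The main obstacle is bookkeeping rather than conceptual: correctly converting between positive- and negative-exponent $\mathrm{Sym}$ conventions, tracking the half-integer shifts $\tfrac{n+1-2i}{2}$ from \eqref{ai-bi}, and rearranging the extremal conditions from Proposition \ref{prop-Gre} into the precise shape of $M_{\mu,\nu}^\bullet$ and $m_{\mu,\nu}^\bullet$ so that compatibility can be invoked verbatim. Once those translations are in place, both cases are purely arithmetic verifications.
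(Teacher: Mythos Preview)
Your proposal is correct and follows essentially the same route as the paper: reduce to Proposition~\ref{prop-Gre} with $d=0$, compute the two bracket inequalities in terms of $M_{\mu,\nu}^{\bullet}$ and $m_{\mu,\nu}^{\bullet}$, and then invoke the compatibility condition \eqref{cond-b-simplified} (resp.\ \eqref{cond-c-simplified}) to conclude. The paper only writes out case~(b) and leaves case~(c) implicit, whereas you treat it explicitly via passage to contragredients; your added detail there is correct and the resulting inequalities $m_{\mu,\nu}^{n+1}\geq\kappa$ and $M_{\mu,\nu}^{n+2}\leq\kappa+1$ are exactly what \eqref{cond-c-simplified} supplies.
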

\bp
We only prove it for case (b) of Proposition \ref{prop-BW}. Indeed, by Proposition \ref{prop-Gre}, $\mathrm{Hom}_{K}(\bbC, \tau_{\xi}^{+})$ is one dimensional if and only if  $2 M_{\mu, \nu}^{n+1} -2 \kappa \leq 0\leq 2 m_{\mu, \nu}^n -2\kappa +2$. On one hand, by
\eqref{cond-b-simplified}, we have
$$
2 M_{\mu, \nu}^{n+1} -2 \kappa\leq 2(\kappa-\frac{1}{2}) -2 \kappa\leq -1<0.
$$
On the other hand, by
\eqref{cond-b}, we have
$$
2 m_{\mu, \nu}^n -2\kappa +2 \geq 2(\kappa-\frac{1}{2}) -2\kappa +2 \geq 1>0.
$$
This finishes the proof.
\ep

\begin{lemma}\label{lemma-mult-one-sigma-j}
The $K\times K\times K$ representation $\tau_{\xi}^{\vee}$ occurs with multiplicity one in $F_{\xi}^{\vee}$.
Moreover, every non-zero element of $\mathrm{Hom}_{G_{\bbC}}(F_{\xi}^{\vee}, \bbC)$ does not vanish on $\tau_{\xi}^{\vee}\subset F_{\xi}^{\vee}$.
\end{lemma}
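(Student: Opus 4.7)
The lemma has two assertions.

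For the multiplicity-one statement, as a $K\times K\times K$-representation,
\[
F_\xi^\vee\big|_{K^3} = (E_{\mu_L}^\vee\otimes E_{\mu_R})\boxtimes(E_{\nu_L}^\vee\otimes E_{\nu_R})\boxtimes(E_{\lambda_L}\otimes E_{\lambda_R}^\vee),
\]
and the multiplicity of $\tau_\xi^\vee = \tau_\mu^\vee\boxtimes\tau_\nu^\vee\boxtimes\sigma_j$ factorises as a product over these three tensor factors. In each factor, the relevant summand is the Cartan component: for instance, $\tau_\mu^\vee = E_{\mu_R - w_0\mu_L}$ has highest weight $(-w_0\mu_L) + \mu_R$, the sum of the highest weights of the two tensor factors. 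Cartan components always occur with multiplicity one, giving the first assertion.

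For the non-vanishing assertion, I translate to an equivalent form. Dualising the Cartan inclusion $\tau_\mu^\vee\hookrightarrow F_\mu^\vee|_K$ yields a $K$-equivariant Cartan projection $p_\mu\colon F_\mu|_K\twoheadrightarrow\tau_\mu$, and similarly $p_\nu$. Via the standard isomorphism $\mathrm{Hom}_{G_\bbC}(F_\xi^\vee,\bbC)\cong\mathrm{Hom}_{G_\bbC}(V_j, F_\mu\otimes F_\nu)$ sending $\phi_F\leftrightarrow\Phi$, the vanishing of $\phi_F|_{\tau_\xi^\vee}$ is equivalent to the vanishing of $(p_\mu\otimes p_\nu)\circ(\Phi|_{\sigma_j})\in\mathrm{Hom}_K(\sigma_j,\tau_\mu\otimes\tau_\nu)$. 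Both Hom-spaces are $1$-dimensional (by Proposition~\ref{prop-compatible} and Lemma~\ref{lemma-inv-dim-one-tau-xi-check}), so it suffices to check that the natural map between them is nonzero.

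To do so I evaluate on a $K$-highest weight vector. Writing $\Phi = \Phi_L\boxtimes\Phi_R$ via the $G_\bbC = GL_n(\bbC)\times GL_n(\bbC)$-structure, a $K$-hwv of $\sigma_j\subset V_j$ is realised in $E_{\lambda_L}\boxtimes E_{\lambda_R}$ as $v_{\lambda_L}^+\otimes v_{\lambda_R}^-$ (highest weight on the left, lowest on the right, as seen by tracing $K$-weights through the embedding $k\mapsto(k,\bar{k})$). Applying $\Phi$ yields $\Phi_L(v_{\lambda_L}^+)\boxtimes\Phi_R(v_{\lambda_R}^-)$, a nonzero element of the $E_{\lambda_L}$-isotypic component of $E_{\mu_L}\otimes E_{\nu_L}$ outer-tensored with the $E_{\lambda_R}$-isotypic component of $E_{\mu_R}\otimes E_{\nu_R}$. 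Rearranging the factors into the $F_\mu\otimes F_\nu$ order and projecting via $p_\mu\otimes p_\nu$ produces the desired element of $\tau_\mu\otimes\tau_\nu$.

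The main obstacle is verifying this projection is nonzero. Both the Pieri decomposition $E_{\mu_L}\otimes E_{\nu_L}\supset E_{\lambda_L}$ on the $L$-side and the Cartan decomposition $F_\mu|_K\supset\tau_\mu$ on the $K$-side (with analogous statements for the $\nu$- and $R$-factors) are governed by Proposition~\ref{prop-Gre} under the same compatibility conditions~\eqref{cond-b} or~\eqref{cond-c}; I expect to exploit this coincidence to exhibit a nonzero highest-weight contribution to the Cartan projection, in a manner analogous to the strategy of Section~3 of~\cite{Sun}.
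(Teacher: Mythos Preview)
Your multiplicity-one argument is fine and agrees with the paper's.

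For the nonvanishing assertion, there is a genuine gap. You have correctly dualised the problem to showing that $(p_\mu\otimes p_\nu)\circ(\Phi|_{\sigma_j})\neq 0$, and you evaluate on the $K$-highest weight vector $v_{\lambda_L}^+\otimes v_{\lambda_R}^-$ of $\sigma_j$. But the step you label ``the main obstacle'' is precisely the content of the lemma: you must show that the vector $\Phi_L(v_{\lambda_L}^+)\boxtimes\Phi_R(v_{\lambda_R}^-)$, after regrouping into $(E_{\mu_L}\otimes E_{\mu_R}^\vee)\otimes(E_{\nu_L}\otimes E_{\nu_R}^\vee)$, survives both Cartan projections $p_\mu$ and $p_\nu$ simultaneously. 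Saying that the Pieri and Cartan decompositions are ``governed by the same compatibility conditions'' does not by itself produce a nonzero contribution; you have not supplied the mechanism.

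The paper supplies this mechanism by a different and more direct route, working on the $\phi_F$ side rather than the $\Phi$ side. Writing $\phi_F=\phi_L\otimes\phi_R$ along $G_\bbC=\GL_n(\bbC)\times\GL_n(\bbC)$, the paper first proves that $\phi_L$ does not vanish on the subspace $\mu_L^+\otimes\nu_L^-\otimes E_{\lambda_L}$. The device here is a transversality argument: one exhibits a subalgebra $\frb=\frb_1\oplus\frb_2\oplus\frgl_n\subset\frgl_n^{\oplus 3}$ (built from lower-triangular matrices with real diagonal, upper-triangular with imaginary diagonal, and a full copy of $\frgl_n$) that is complementary to the diagonal $\frh$, so that $U(\frh)(\mu_L^+\otimes\nu_L^-\otimes E_{\lambda_L})$ already spans the whole triple tensor product. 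A weight count then pins down the unique one-dimensional weight space $(E_{\lambda_L})_{\mu_L+w_0\nu_L}$ on which $\phi_L(\mu_L^+\otimes\nu_L^-\otimes\,\cdot\,)$ is nonzero, and similarly for $\phi_R$. Finally, explicit elements $E_L,E_R$ of the enveloping algebra are used to move $\lambda_L^+\otimes\lambda_R^-$ (which lies in the Cartan component $\sigma_j$) into that weight space; the extra terms produced by the Leibniz rule land in the wrong weight spaces and are annihilated by $\phi_L$ or $\phi_R$. This is exactly the missing idea in your proposal, and it does not follow from the compatibility observation alone.
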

\bp
Note that $\tau_{\mu}^{\vee}$ is the Cartan component of $F_{\mu}^{\vee}$.
Similarly, $\tau_{\nu}^{\vee}$ (resp. $\sigma_j$) is the Cartan component of $F_{\nu}^{\vee}$ (resp. $V_j$). Thus the $K\times K\times K$ representation $\tau_{\xi}^{\vee}$ occurs with multiplicity one in $F_{\xi}^{\vee}$. This proves the first assertion.

We fix a nonzero element $\phi_F$ in $\mathrm{Hom}_{G_{\bbC}}(F_{\xi}^{\vee}, \bbC)$.
In view of the proof of Proposition \ref{prop-compatible}, we have $\phi_F=\phi_L\otimes \phi_R$, where $\phi_L$  is a nonzero element in  $ \mathrm{Hom}_{K}( E_{\mu^L}^{\vee}\otimes E_{\nu^L}^{\vee} \otimes E_{\lambda_L}, \bbC)$, and $\phi_R$ is a nonzero element in  $ \mathrm{Hom}_{K}( E_{\mu^R}\otimes E_{\nu^R} \otimes E_{\lambda_R}^{\vee}, \bbC)$.

We denote a nonzero highest (resp. lowest) weight vector of $E_{\lambda_L}$ by $\lambda_L^+$ (resp. $\lambda_L^-$). Note that as a $K$-type, $E_{\lambda_L}^{\vee}$ can be realized on the same space as $E_{\lambda_L}$, and then  $\lambda_L^+$ (resp. $\lambda_L^-$) becomes a lowest (resp. highest) weight vector. We interpret $\mu_L^+$, $\mu_L^{-}$, $\nu_L^+$ and $\nu_L^-$ similarly.

Denote all the lower triangular matrices in $\frg\frl_n:=\frg\frl(n, \bbC)$  with real diagonal entries by $\frb_1$, and denote those upper triangular matrices with purely imaginary diagonal entries by $\frb_2$. We consider the  subalgebra $\frb:=\frb_1\oplus \frb_2 \oplus \frg\frl_n$ of $\frg\frl_n \oplus\frg\frl_n \oplus\frg\frl_n$. Let $\frh$ be the diagonal embedding of $\frg\frl_n$ in $\frg\frl_n \oplus\frg\frl_n \oplus\frg\frl_n$. Then it is direct to check that $\frh\cap\frb=\{0\}$. Then by dimension counting, we have
$\frg\frl_n \oplus\frg\frl_n \oplus\frg\frl_n=\frh\oplus \frb$.
Therefore, $$E_{\mu_L}^{\vee}\otimes E_{\nu_L}^{\vee}\otimes E_{\lambda_L}=U(\frh)U(\frb) (\mu_L^+\otimes \nu_L^-\otimes \lambda_L^+)=U(\frh) (\mu_L^+\otimes \nu_L^-\otimes E_{\lambda_L}).$$
Thus $\phi_L$ does not vanish on  the space $\mu_L^+\otimes \nu_L^-\otimes E_{\lambda_L}$.
Similarly, $\phi_R$ does not vanish on  the space $\mu_R^-\otimes \nu_R^+\otimes E_{\lambda_R}^{\vee}$.

Denote by $(E_{\lambda_L})_{\lambda}$ the $\lambda$ weight space of $E_{\lambda_L}$.
Now let us adopt the setting of  Proposition \ref{prop-BW} (c), and prove the second assertion. The case of Proposition \ref{prop-BW} (b) is similar.  Note that each weight space of $E_{\lambda_L}$ and $E_{\lambda_R}^{\vee}$ is one dimensional. We choose a basis of $E_{\lambda_L}$ (resp. $E_{\lambda_R}^{\vee}$) consisting of weight vectors.
Since $\phi_L$ maps $E_{\mu^L}^{\vee}\otimes E_{\nu^L}^{\vee} \otimes E_{\lambda_L}$ onto the trivial representation $K$-equivariantly, we have that $\phi_L$ does not vanish on $\mu_L^+\otimes \nu_L^-\otimes (E_{\lambda_L})_{\lambda}$ if and only if the latter space has weight zero. That is, if and only if  $\lambda=\mu_L+ w_0\nu_L$. We put $\lambda_L^0:=\mu_L+ w_0\nu_L$, and put $a_k:=\mu_k+\nu_{n+1-k}+j-1$ for $2\leq k\leq n$. Note that each $a_k$ is nonnegative since $j$ is assumed to be a critical place for $\pi_{\mu}\times \pi_{\nu}$,  see \eqref{cond-c}.
Now we have
$$\lambda_L-\lambda_L^0=\sum_{k=2}^{n} a_k(\epsilon_1 -\epsilon_k),$$
and there exists an element $E_L\in U(\frg\frl_n)_{-\lambda_L+\lambda_L^0}$ such that
\be\label{L}
E_L \, . \,\lambda_L^+=w_L,
\ee
where $w_L$ is a nonzero vector in $(E_{\lambda_L})_{\lambda_L^0}$, and it is among the chosen basis of $E_{\lambda_L}$. Here ``." means the action of $\frg\frl_n$ on $E_{\lambda_L}$.

Similarly, $\phi_R$ does not vanish on $\mu_R^-\otimes \nu_R^+\otimes (E_{\lambda_R}^{\vee})_{\lambda}$ if and only if $\lambda=- w_0 \mu_R - \nu_R$. We put $\lambda_R^0:=- w_0 \mu_R - \nu_R$, and put $b_k:=-j-2\kappa+\mu_k+\nu_{n+1-k}$ for $2\leq k\leq n$, which are all nonnegative by \eqref{cond-c}.
Note that
$$-w_0\lambda_R-\lambda_R^0=\sum_{k=2}^{n} b_k(\epsilon_1 -\epsilon_k),$$
and there exists an element $E_R\in U(\frg\frl_n)_{w_0\lambda_R + \lambda_R^0}$ such that
\be\label{R}
E_R \, .\, \lambda_R^-=w_R,
\ee
where $w_R$ is a nonzero vector in $(E_{\lambda_R}^{\vee})_{\lambda_R^0}$, and it is among the chosen basis of $E_{\lambda_R}^{\vee}$.

The vector $\lambda_L^+ \otimes \lambda_R^-$ is a highest weight vector of the Cartan component of $E_{\lambda_L}\otimes E_{\lambda_R}^{\vee}$. Moreover, by \eqref{L} and \eqref{R}, we have
\be\label{key-vector}
w_0:= E_R E_L \, . \,(\lambda_L^+\otimes \lambda_R^-)=w_L\otimes w_R + \mbox{other terms.}
\ee
Here each linear component in ``other terms" is different from $w_L\otimes w_R$, and has the form $w_1\otimes w_2$, where $w_1$ (resp. $w_2$) is among the chosen basis of $E_{\lambda_L}$ (resp. $E_{\lambda_R}^{\vee}$), and $w_1\otimes w_2 \in (E_{\lambda_L}\otimes E_{\lambda_R}^{\vee})_{\lambda_L^0+\lambda_R^0}$. We necessarily have $\phi_L(\mu_L^+\otimes \nu_L^-\otimes w_1)=0$ or $\phi_R(\mu_R^-\otimes \nu_R^+\otimes w_2)=0$. Therefore,
$$
\phi_F((\mu_L^+ \boxtimes \mu_R^-)  \otimes (\nu_L^-\boxtimes \nu_R^+)   \otimes
(w_1\boxtimes w_2))=\phi_L(\mu_L^+\otimes \nu_L^-\otimes w_1) \phi_R(\mu_R^-\otimes \nu_R^+\otimes w_2)=0.
$$
Thus by \eqref{key-vector},
$$
\phi_F((\mu_L^+ \boxtimes \mu_R^-)  \otimes (\nu_L^-\boxtimes \nu_R^+)   \otimes
w_0)=\phi_L(\mu_L^+\otimes \nu_L^-\otimes w_L) \phi_R(\mu_R^-\otimes \nu_R^+\otimes w_R)\neq 0.
$$
Note that $\mu_L^+ \otimes \mu_R^-$ is a lowest weight vector of $\tau_{\mu}^{\vee}$, and that $\nu_L^- \otimes \nu_R^+$ is a highest weight vector of $\tau_{\nu}^{\vee}$.
Moreover, the vector $w_0$ defined in \eqref{key-vector} lives in $\sigma_j$, the Cartan component of $E_{\lambda_L}\otimes E_{\lambda_R}^{\vee}$. We conclude that $\phi_F$
does not vanish on $\tau_{\xi}^{\vee}=\tau_{\mu}^{\vee}\otimes \tau_{\nu}^{\vee}\otimes \sigma_j$. This finishes the proof.
\ep

\subsection{Analysis of linear functionals on PRV components}
The following lemma is taken from  Section 2.1 of \cite{Ya}.

\begin{lemma}\label{lemma-Ya}
Let $\alpha_1$ and $\alpha_2$ be two irreducible  representations of a compact connected Lie group $L$. Let $\alpha_3$ be the Cartan component of $\alpha_1\otimes \alpha_2$.
Let $f : \alpha_1 \otimes \alpha_2 \to \alpha_3$ be a nonzero $L$-equivariant linear map. Then $f$
maps all nonzero decomposable vectors (namely, vectors of the form $u\otimes v\in \alpha_1\otimes \alpha_2$) to nonzero vectors.
\end{lemma}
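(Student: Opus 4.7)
The plan is to translate the statement into a question about orbit structure on $\mathbb{P}(\alpha_1) \times \mathbb{P}(\alpha_2)$ and apply the Borel fixed point theorem. Let $L_{\bbC}$ be the complexification of $L$. First I would check that $\ker f$ is $L_{\bbC}$-stable, which follows from Weyl's unitary trick since $\ker f$ is $L$-stable and $L_{\bbC}$ is the connected complexification. Then I would consider
\[
Z := \{(u,v) \in \alpha_1 \times \alpha_2 : f(u \otimes v) = 0\},
\]
a closed algebraic subvariety which is $L_{\bbC}$-invariant under the diagonal action $g\cdot(u,v) = (gu, gv)$ (since $f$ is equivariant), and biconical in the sense that $(u,v) \in Z$ iff $(cu, c'v) \in Z$ for any $c, c' \in \bbC^\times$ (since $f$ is bilinear in the two factors).

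Next I would argue by contradiction: suppose some $(u_0,v_0) \in Z$ has $u_0, v_0 \neq 0$. Biconicity descends $Z$ (minus the coordinate axes) to a nonempty closed $L_{\bbC}$-invariant subvariety $\bar{Z} \subset \mathbb{P}(\alpha_1) \times \mathbb{P}(\alpha_2)$, where $L_{\bbC}$ acts diagonally. Fix a Borel $B \subset L_{\bbC}$ containing the maximal torus used to define $\lambda_1$ and $\lambda_2$. Since $\bar{Z}$ is a closed subvariety of a complete variety, it is itself complete, so the Borel fixed point theorem produces a $B$-fixed point in $\bar{Z}$. I would then verify that the unique $B$-fixed point of $\mathbb{P}(\alpha_i)$ is the highest weight line $[v_{\lambda_i}]$: any $B$-fixed line is a $T$-weight line annihilated by $\frn^+$, and in an irreducible representation such a line must equal $\bbC v_{\lambda_i}$ by multiplicity one of the highest weight. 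Hence $\bar{Z}$ must contain $([v_{\lambda_1}], [v_{\lambda_2}])$.

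To finish, I would show that $(v_{\lambda_1}, v_{\lambda_2}) \notin Z$, contradicting the previous step. The $(\lambda_1 + \lambda_2)$-weight space of $\alpha_1 \otimes \alpha_2$ is one-dimensional, spanned by $v_{\lambda_1} \otimes v_{\lambda_2}$, and likewise the $(\lambda_1 + \lambda_2)$-weight space of the Cartan component $\alpha_3$ is the one-dimensional highest weight line. Since $f$ is nonzero and $L$-equivariant, its restriction to the former weight space must land nonzero into the latter; otherwise $v_{\lambda_1} \otimes v_{\lambda_2}$ would lie in $\ker f$, and the $L_{\bbC}$-submodule it generates, namely all of $\alpha_3$, would be killed by $f$, forcing $f = 0$. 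Hence $f(v_{\lambda_1} \otimes v_{\lambda_2}) \neq 0$, and the contradiction is achieved.

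The step I expect to require the most care is the uniqueness of the $B$-fixed point in $\mathbb{P}(\alpha_i)$: one must argue precisely that $\frn^+$-invariance in an irreducible representation forces the line to be the highest weight line (via multiplicity one of the highest weight), and not merely some other weight line. Once this uniqueness and the $L_{\bbC}$-stability of $\ker f$ are set up, Borel's theorem closes the argument almost immediately.
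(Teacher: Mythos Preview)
Your proof is correct. The paper itself does not supply a proof of this lemma; it simply attributes the result to Section~2.1 of Yacobi~\cite{Ya} and moves on. So there is no in-paper argument to compare against.

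For what it is worth, your Borel fixed point approach is essentially the standard one for this kind of statement and is in the same spirit as Yacobi's treatment: the key geometric fact is that the closed $L_{\bbC}$-orbit in $\mathbb{P}(\alpha_1)\times\mathbb{P}(\alpha_2)$ is the orbit of the pair of highest weight lines, and any nonempty closed $B$-stable subvariety must meet it. Your reduction to showing $f(v_{\lambda_1}\otimes v_{\lambda_2})\neq 0$ via multiplicity one of the Cartan component is exactly right. The point you flagged as delicate --- uniqueness of the $B$-fixed line in $\mathbb{P}(\alpha_i)$ --- is handled correctly: a $B$-fixed line is a $T$-weight line killed by $\frn^+$, hence a highest weight line, and irreducibility forces it to be \emph{the} highest weight line. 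One small remark: to ensure $\bar Z$ is genuinely closed in $\mathbb{P}(\alpha_1)\times\mathbb{P}(\alpha_2)$, note that the condition $f(u\otimes v)=0$ is bihomogeneous, so $\bar Z$ is cut out by bihomogeneous equations and is therefore a closed projective subvariety; this is implicit in what you wrote but worth stating explicitly.
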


Let $\sigma_1$ and $\sigma_2$ be two irreducible representations of $K^3$, and write $\sigma_3$ for their Cartan component. The remaining part of this section is devoted to deducing an analog of Proposition 2.16 of \cite{Sun}. Since our situation is slightly more complicated,  we give a proof here.

\begin{prop}\label{prop-PRV}
Assume that $\dim\,(\sigma_i)^{K}=1$ for $1\leq i\leq  3$. Let $\phi_1$ (resp. $\phi_3$) be any nonzero $K$-invariant linear functional  on $\sigma_1^{\vee}$ (resp. $\sigma_3$). Then $\phi_1\otimes \phi_3$ does not vanish on the PRV component
$$
\sigma_2 \subset \sigma_1^{\vee}\otimes \sigma_3.
$$
\end{prop}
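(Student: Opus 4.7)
The plan is to transfer the non-vanishing of $\phi_1\otimes\phi_3$ on the PRV component $\sigma_2\subset \sigma_1^\vee\otimes\sigma_3$ to the non-vanishing of a single value of the Cartan projection, where Lemma \ref{lemma-Ya} applies directly. First I would identify $\phi_1$, which is a $K$-invariant linear functional on $\sigma_1^\vee$, with a nonzero $K$-invariant vector $e_1\in(\sigma_1)^K$ via the canonical double-duality $(\sigma_1^\vee)^{*}\cong\sigma_1$, so that $\phi_1(\xi)=\xi(e_1)$ for every $\xi\in\sigma_1^\vee$. Using the hypothesis $\dim(\sigma_i)^K=1$ for $i=1,2,3$, I would also fix nonzero vectors $e_2\in(\sigma_2)^K$ and $e_3\in(\sigma_3)^K$.

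Next I would invoke the tensor--hom adjunction
\[
\mathrm{Hom}_{K^3}\bigl(\sigma_2,\,\sigma_1^\vee\otimes\sigma_3\bigr)\;\cong\;\mathrm{Hom}_{K^3}\bigl(\sigma_1\otimes\sigma_2,\,\sigma_3\bigr).
\]
Both sides are one-dimensional: the right by the definition of $\sigma_3$ as the Cartan component of $\sigma_1\otimes\sigma_2$, and the left because this multiplicity-one transfers to the multiplicity of $\sigma_2$ in $\sigma_1^\vee\otimes\sigma_3$. Under the adjunction, the inclusion $\iota\colon\sigma_2\hookrightarrow\sigma_1^\vee\otimes\sigma_3$ onto the PRV component corresponds, up to a nonzero scalar, to the Cartan projection $f\colon\sigma_1\otimes\sigma_2\to\sigma_3$. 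Writing $\iota(v_2)=\sum_i\xi_i\otimes w_i$, the adjunction formula is $f(u_1\otimes v_2)=\sum_i\xi_i(u_1)\,w_i$, and so
\[
(\phi_1\otimes\phi_3)\bigl(\iota(v_2)\bigr)=\sum_i\phi_1(\xi_i)\,\phi_3(w_i)=\sum_i\xi_i(e_1)\,\phi_3(w_i)=\phi_3\bigl(f(e_1\otimes v_2)\bigr)
\]
for every $v_2\in\sigma_2$.

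Taking $v_2=e_2$, the vector $e_1\otimes e_2\in\sigma_1\otimes\sigma_2$ is nonzero and decomposable, so Lemma \ref{lemma-Ya} applied to the compact connected group $L=K^3$ yields $f(e_1\otimes e_2)\neq 0$. Since $f$ is $K^3$-equivariant it is in particular equivariant under the diagonal $K$, and since $e_1\otimes e_2$ is diagonally $K$-invariant, $f(e_1\otimes e_2)$ lies in the one-dimensional space $(\sigma_3)^K=\mathbb{C}\cdot e_3$; hence $f(e_1\otimes e_2)=c\cdot e_3$ with $c\neq 0$. Finally, since $\phi_3$ is a nonzero $K$-invariant functional on $\sigma_3$, it annihilates every non-trivial $K$-isotypic component by Schur's lemma and must therefore be nonzero on the one-dimensional subspace $(\sigma_3)^K$; in particular $\phi_3(e_3)\neq 0$. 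Combining, $(\phi_1\otimes\phi_3)(\iota(e_2))=c\,\phi_3(e_3)\neq 0$, which is the conclusion.

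The argument is essentially formal once the adjunction is in place; the main point of care is to keep track of the diagonal $K$-action (under which the invariants in the hypothesis are taken) versus the ambient $K^3$-action (under which the $\sigma_i$ are irreducible and the Cartan/PRV components are defined), and to observe that Lemma \ref{lemma-Ya} does apply with $L=K^3$, which is compact and connected.
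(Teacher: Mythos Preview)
Your proof is correct and genuinely simpler than the paper's. The paper decomposes each $\sigma_i=\alpha_i\otimes\beta_i\otimes\gamma_i$ into its three $K$-factors, sets up a commutative diagram relating $(\phi_1\otimes\phi_3)\circ\iota$ evaluated at a fixed $v_2\in(\sigma_2)^K$ to a composition $h\circ(\phi_1\otimes v_2)\circ(f\otimes g)$ running through the individual factors, and then applies Lemma~\ref{lemma-Ya} only with $L=K$ (a single copy), to the map $h\in\Hom_K(\alpha_1\otimes\alpha_2,\alpha_3)$. To feed that lemma a nonzero decomposable vector, the paper must carry out an explicit weight-space analysis (borrowing the argument of Lemma~\ref{lemma-mult-one-sigma-j}) to verify that $\phi_1(\mu_1^-\otimes\nu_1^+)$ and $v_2(\mu_2^-\otimes\nu_2^+)$ are nonzero. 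Your route bypasses all of this by applying Lemma~\ref{lemma-Ya} directly with $L=K^3$ to the Cartan projection $f\colon\sigma_1\otimes\sigma_2\to\sigma_3$ and the decomposable vector $e_1\otimes e_2$; the nonvanishing of this input is immediate, and its image automatically lands in the one-dimensional $(\sigma_3)^K$, where $\phi_3$ is nonzero by Schur. What the paper's longer argument buys is only a factor-by-factor viewpoint closer to Sun's original template; your argument exploits the hypotheses $\dim(\sigma_i)^K=1$ more structurally and avoids any weight-vector bookkeeping.
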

\bp
 Let $\sigma_i=\alpha_i\otimes \beta_i\otimes \gamma_i$ for $1\leq i\leq 3$. Fix a generator $v_2\in (\alpha_2\otimes \beta_2\otimes \gamma_2)^K$. It is routine to check that the following diagram commutes:
\[
\begin{CD}
\mathrm{Hom}_{K^3}(\alpha_2\otimes \beta_2\otimes \gamma_2, \alpha_1^{\vee}\otimes \beta_1^{\vee}\otimes\gamma_1^{\vee}\otimes \alpha_3\otimes \beta_3\otimes \gamma_3) @> f\mapsto ((\phi_1\otimes\phi_3)\circ f)(v_2) >\phantom{f\otimes g\otimes h \mapsto \phi_3(h\circ (\phi_1\otimes v_2)\circ (f\otimes g))}> \bbC \\
@VV \cong V  @V=VV \\
\mathrm{Hom}_{K}(\gamma_3^{\vee}, \gamma_1^{\vee}\otimes \gamma_2^{\vee})
\otimes \mathrm{Hom}_{K}(\beta_3^{\vee}, \beta_1^{\vee}\otimes \beta_2^{\vee})
\otimes
\mathrm{Hom}_{K}(\alpha_1\otimes \alpha_2, \alpha_3) @>f\otimes g\otimes h \mapsto \phi_3(h\circ (\phi_1\otimes v_2)\circ (f\otimes g))>> \bbC
\end{CD}
\]
where the left vertical arrow is the canonical isomorphism, and in the bottom
horizontal arrow, we view
\begin{eqnarray*}
v_2\in \mathrm{Hom}_K(\beta_2^{\vee}\otimes \gamma_2^{\vee}, \alpha_2)&=& (\alpha_2\otimes \beta_2\otimes \gamma_2)^K,\\
\phi_1\in \mathrm{Hom}_{K}(\beta_1^{\vee}\otimes \gamma_1^{\vee}, \alpha_1)&=& \mathrm{Hom}_{K}(\alpha_1^\vee\otimes \beta_1^\vee\otimes \gamma_1^\vee, \bbC), \mbox{ and}
\end{eqnarray*}
$$
h\circ (\phi_1\otimes v_2)\circ (f\otimes g)\in \mathrm{Hom}_{K}(\gamma_3^{\vee}\otimes \beta_3^{\vee}, \alpha_3) = (\alpha_3\otimes \beta_3\otimes \gamma_3)^K.
$$

Now it suffices to check that the bottom horizontal arrow of the diagram is nonzero. We pick up a generator
$$
f_0\otimes g_0\otimes h_0\in \mathrm{Hom}_{K}(\gamma_3^{\vee}, \gamma_1^{\vee}\otimes \gamma_2^{\vee})
\otimes \mathrm{Hom}_{K}(\beta_3^{\vee}, \beta_1^{\vee}\otimes \beta_2^{\vee})
\otimes
\mathrm{Hom}_{K}(\alpha_1\otimes \alpha_2, \alpha_3).
$$
Note that $\beta_3^{\vee}$ (resp. $\gamma_3^{\vee}$) is the Cartan component of $\beta_1^{\vee}\otimes \beta_2^{\vee}$ (resp. $\gamma_1^{\vee}\otimes \gamma_2^{\vee}$).
Let $u_3$ be a nonzero highest  weight vector of $\beta_3^{\vee}$, and let $v_3$ be a nonzero lowest weight vector of $\gamma_3^{\vee}$. By Lemma 2.11 of \cite{Sun}, $g_0(u_3)$ (resp. $f_0(v_3)$) is a nonzero decomposable vector in   $\beta_1^{\vee}\otimes \beta_2^{\vee}$ (resp. $\gamma_1^{\vee}\otimes \gamma_2^{\vee}$). Therefore, $(\phi_1\otimes v_2)(f_0\otimes g_0)(v_3\otimes u_3)$ is a decomposable vector in $\alpha_1\otimes \alpha_2$. We will soon show that it is actually nonzero.
Now Lemma \ref{lemma-Ya} implies that
$$
(h_0\circ (\phi_1\otimes v_2)\circ(f_0\otimes g_0))(v_3\otimes u_3)\neq 0.
$$
Thus $h_0\circ (\phi_1\otimes v_2)\circ(f_0\otimes g_0)$ is a nonzero generator of the one dimensional space
$$
\mathrm{Hom}_{K}(\gamma_3^{\vee}\otimes \beta_3^{\vee}, \alpha_3)=(\alpha_3\otimes \beta_3\otimes \gamma_3)^K.
$$
Since $\phi_3$ does not vanish on $(\alpha_3\otimes \beta_3\otimes \gamma_3)^K$, this shows that the bottom horizontal arrow is nonzero, as desired.

It remains to show that $(\phi_1\otimes v_2)(f_0\otimes g_0)(v_3\otimes u_3)$ is nonzero.
For $i=1, 2$, let $\mu_i$ be the highest weight of $\beta_i$, and let $\nu_i$ be the highest weight of $\gamma_i$; denote by $\mu_i^+$ (resp. $\mu_i^{-}$) a nonzero highest (resp. lowest) weight vector of $\beta_i$; denote by $\nu_i^+$ (resp. $\nu_i^{-}$) a nonzero highest (resp. lowest) weight vector of $\gamma_i$. We realize $\beta_1^{\vee}$ on the same space of $\beta_1$. Then, say, $\mu_1^+$ is a lowest weight vector of $\beta_1^{\vee}$.
Up to nonzero scalars,
$$g_0(u_3)=\mu_1^-\otimes \mu_2^-, \quad f_0(v_3)=\nu_1^+\otimes \nu_2^+.$$
Using the same technique as in Lemma \ref{lemma-mult-one-sigma-j}, one has that $\phi_1(\mu_1^-\otimes \nu_1^+\otimes \alpha_1^\vee)\neq 0$. Moreover, then one deduces that
$\phi_1(\mu_1^-\otimes \nu_1^+\otimes (\alpha_1^\vee)_{\lambda})$ is nonzero if and only if $\lambda=w_0\mu_1+\nu_1$. Here $(\alpha_1^\vee)_{\lambda}$ denotes the $\lambda$ weight space of $\alpha_1^\vee$. Therefore, when $\phi_1$ is viewed as a $K$-equivariant homomorphism from $\beta_1^{\vee}\otimes \gamma_1^{\vee}$ to $\alpha_1$, we have that
$$
\phi_1(\mu_1^{-}\otimes \nu_1^+)\in (\alpha_1)_{-w_0\mu_1-\nu_1}
$$
is nonzero.
Similarly,
$$
v_2(\mu_2^{-}\otimes \nu_2^+)\in (\alpha_2)_{-w_0\mu_2-\nu_2}
$$
is nonzero.
Thus
$$
(\phi_1\otimes v_2)(f_0\otimes g_0)(v_3\otimes u_3)=\phi_1(\mu_1^{-}\otimes \nu_1^+)\otimes v_2(\mu_2^{-}\otimes \nu_2^+)
$$
is nonzero. This finishes the proof.
\ep

\section{Infinite dimensional representations}
Recall the pure weights $\mu$ and $\nu$ from \eqref{mu} and \eqref{nu}, respectively.
Assume that $\mu$ and $\nu$ are compatible. Furthermore, we adopt the assumption \eqref{assumption} from now on. That is, we assume that $\kappa$ is an half integer and fix $j=-\kappa+\frac{1}{2}$, which is a critical place for $\pi_{\mu}\times \pi_{\nu}$.
Recall the $K$-types $\tau_\mu^+$, $\tau_\nu^+$ and $\sigma_j^+$ from the previous section. Recall from \eqref{tau-xi-plus} that $\tau_\xi^+:=\tau_\mu^+\otimes\tau_\nu^+\otimes\sigma_j^+$.
By Lemmas \ref{lemma-Ij-min-K} and \ref{lemma-Jmu-min-K}, the $K^3$-representation $\tau_\xi^+$ occurs with multiplicity one in $\pi_\xi$.

Recall that $G=\GL(n, \K)$. Fix $H=\SL(n, \K)$. This section aims to prove the following proposition.

\begin{prop}\label{prop-not-vanish-tau-xi-plus} Under the assumption \eqref{assumption}, every nonzero element of $\mathrm{Hom}_{G}(\pi_{\xi}, \bbC)$ does not vanish on $\tau_{\xi}^{+}\subset\pi_{\xi}$.
\end{prop}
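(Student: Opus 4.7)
The strategy, modeled on Sun's argument in \cite{Sun}, is to reduce the claim to the non-vanishing of a Rankin--Selberg integral on a distinguished $K$-invariant vector, then to force that non-vanishing using unitarity at $j = -\kappa + 1/2$. First, since $\phi_\pi$ is $G$-equivariant into the trivial $K$-module $\bbC$, its restriction to the irreducible $K$-module $\tau_\xi^+$ (viewed as a $K$-submodule via the diagonal embedding $K \hookrightarrow K^3$) factors through the canonical projection $\tau_\xi^+ \twoheadrightarrow (\tau_\xi^+)^K$. By Lemma \ref{lemma-inv-dim-one-tau-xi-plus}, $(\tau_\xi^+)^K$ is one-dimensional; fix a nonzero generator $v_0$. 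It suffices to prove $\phi_\pi(v_0) \neq 0$.

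The crucial feature of $j_0 := -\kappa + 1/2$ now enters. Under assumption \eqref{assumption}, $I_{j_0}|_H$ with $H := \SL(n, \K)$ is unitarizable (as remarked after Theorem~A), and $\pi_\mu|_H, \pi_\nu|_H$ are unitary and tempered by the very definition of $\Omega(\mu), \Omega(\nu)$. Consequently $\pi_\xi|_H$ admits a positive-definite $H$-invariant Hermitian form, obtained as the tensor product of the three individual forms.

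Next, I would realize $\phi_\pi$ concretely as the local Rankin--Selberg integral of Jacquet--Piatetski-Shapiro--Shalika,
\[
\phi_\pi(f_\mu \otimes f_\nu \otimes f_I) \;=\; \int_{U_n(\K) \backslash \GL_n(\K)} W_{f_\mu}(g)\, W_{f_\nu}(g)\, f_I(g)\, dg,
\]
where $W_{f_\mu}, W_{f_\nu}$ are the Whittaker functions of $f_\mu \in \pi_\mu$, $f_\nu \in \pi_\nu$, and $f_I \in I_{j_0}$ is a flat section. Using the induced models $J_\mu, J_\nu, I_{j_0}$ from Section~2 together with the explicit descriptions of the minimal $K$-types in Lemmas \ref{lemma-Ij-min-K} and \ref{lemma-Jmu-min-K}, I would write $v_0$ as a specific $K$-invariant combination of minimal-$K$-type sections. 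Plugging in and exploiting unitarity at $j_0$, the integrand should reduce to (a positive multiple of) a non-negative function on $\GL_n(\K)$ that is strictly positive near the identity, forcing $\phi_\pi(v_0) \neq 0$.

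The main obstacle is precisely this last step: matching the abstract $G$-invariant functional $\phi_\pi$ to a manifestly non-negative Rankin--Selberg integrand on $v_0$, and then confirming strict positivity. The matching uses uniqueness (up to scalar) of invariant trilinear forms combined with positive-definiteness of the unitary Hermitian structure at $j_0$; the strict non-vanishing uses the uniqueness of the Whittaker model for generic representations together with the multiplicity-one structure of the minimal $K^3$-type $\tau_\xi^+$ developed in Section~4. Weaving these two strands into a clean non-vanishing statement for $\phi_\pi(v_0)$, while carefully tracking the concrete realization of $v_0$ against the Rankin--Selberg integrand, is the technical heart of the proof.
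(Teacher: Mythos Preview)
Your proposal assembles the right ingredients---unitarity of $\pi_\xi|_H$ at $j_0=-\kappa+\tfrac12$, the one-dimensionality of $(\tau_\xi^+)^K$, and multiplicity one for $\Hom_G(\pi_\xi,\bbC)$---but the route you choose for the positivity step has a real gap. You propose to evaluate the Rankin--Selberg integral directly on the $K$-invariant vector $v_0$ and argue that the Whittaker integrand $W_{f_\mu}(g)W_{f_\nu}(g)f_I(g)$ is nonnegative and strictly positive near the identity. There is no reason to expect this: Whittaker functions on $\GL_n(\bbC)$ are complex-valued and oscillatory, and the $K$-invariant vector $v_0$ is a linear combination (not a pure tensor) in $\tau_\mu^+\otimes\tau_\nu^+\otimes\sigma_j^+$, so the integrand on $v_0$ is a sum of products of such functions with no evident sign. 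You correctly flag this as ``the main obstacle,'' but the plan offers no mechanism to overcome it.

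The paper sidesteps this entirely. Rather than analyze the Rankin--Selberg functional $\phi_\pi$, it \emph{constructs a different} $G$-invariant functional
\[
\phi(u)\;=\;\int_H \langle h.u,\,v_\xi\rangle_\xi\,\od h,
\]
where $\langle\,,\,\rangle_\xi$ is the tensor Hermitian form coming from unitarity and $v_\xi\in(\tau_\xi^+)^{K_0}$. Absolute convergence is handled by matrix-coefficient bounds (Lemma~\ref{lemma-hermitian-form}). The crucial positivity input is then Sun's theorem \cite{Sun09} that $\langle s.v_\xi,v_\xi\rangle_\xi>0$ for every $s$ in the positive-definite Hermitian part $S$ of the Cartan decomposition $H=SK_0$; integrating over $S$ gives $\phi(v_\xi)>0$ immediately (Lemmas~\ref{lemma-positive-matrix-coef} and~\ref{lemma-positive-v-xi}). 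Only after exhibiting this one good functional does the paper invoke multiplicity one (Lemma~\ref{lemma-mult-one}, via \cite{SZ} and the Schwartz-space realization of $I_j$) to conclude that \emph{every} nonzero element of $\Hom_G(\pi_\xi,\bbC)$---in particular $\phi_\pi$---is a nonzero scalar multiple of $\phi$ and hence does not vanish on $\tau_\xi^+$. So the logic is ``existence of a good functional $+$ multiplicity one,'' not ``direct analysis of $\phi_\pi$''; the missing idea in your plan is precisely this alternate functional together with the matrix-coefficient positivity theorem that makes its nonvanishing transparent.
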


By Section \ref{sec-intro}, the representations $\pi_\mu\vert_{H}$ and $\pi_\nu\vert_{H}$ are unitarizable and tempered. Also, $I_j\vert_{H}$ is unitarizable under the
assumption \eqref{assumption}. We fix a $H^3$-invariant positive definite continuous Hermitian form $\langle\, , \,\rangle_\xi:=\langle\, , \,\rangle_{\pi_\mu}\otimes\langle\, , \,\rangle_{\pi_\nu}\otimes\langle\, , \,\rangle_{I_j}$ on $\pi_\xi$.

\begin{lemma}\label{lemma-hermitian-form}
The integrals in
\be \label{integral}
\begin{array}{rcl}
\pi_{\xi}\times\pi_{\xi} & \to & \C,\\
(u,v) & \mapsto &  \int_H\langle h.u, v \rangle_\xi \od\! h
\end{array}
\ee
converge absolutely for all $u,v\in \pi_{\xi}$, and yields a continuous $H$-invariant Hermitian form on $\pi_{\xi}$. Here ``$\od\! h$'' denotes a Haar measure on $H$.
\end{lemma}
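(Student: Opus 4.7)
The plan is to reduce the claim to a matrix-coefficient estimate on $H$, where temperedness of $\pi_\mu|_H$ and $\pi_\nu|_H$, combined with the unitarity of $I_j|_H$ granted by the assumption \eqref{assumption}, will together produce an $L^1$ majorant for the integrand in \eqref{integral}.

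First I would exploit the tensor product structure. Since the three sesquilinear forms $\langle\cdot,\cdot\rangle_{\pi_\mu}$, $\langle\cdot,\cdot\rangle_{\pi_\nu}$, $\langle\cdot,\cdot\rangle_{I_j}$ are continuous and $\pi_\xi=\pi_\mu\hat{\otimes}\pi_\nu\hat{\otimes}I_j$ is densely spanned by decomposable tensors $u=u_\mu\otimes u_\nu\otimes u_j$ and $v=v_\mu\otimes v_\nu\otimes v_j$, it suffices to establish, for such decomposable tensors, an estimate of the form
\[
\int_H \bigl|\langle h.u_\mu,v_\mu\rangle_{\pi_\mu}\bigr|\cdot\bigl|\langle h.u_\nu,v_\nu\rangle_{\pi_\nu}\bigr|\cdot\bigl|\langle h.u_j,v_j\rangle_{I_j}\bigr|\,\od\! h \;<\;\infty,
\]
with the left-hand side dominated by continuous seminorms in each of the six vectors.

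Next, because $\pi_\mu|_H$ is a Casselman--Wallach tempered representation of $H$, the smooth matrix coefficient $c_\mu(h):=\langle h.u_\mu,v_\mu\rangle_{\pi_\mu}$ belongs to the Harish-Chandra Schwartz space $\caC(H)$; in particular $c_\mu\in L^2(H)$, with $\|c_\mu\|_{L^2(H)}$ bounded by the product of a continuous seminorm in $u_\mu$ and one in $v_\mu$. The same holds for $c_\nu(h):=\langle h.u_\nu,v_\nu\rangle_{\pi_\nu}$. The unitarity of $I_j|_H$ gives the pointwise bound $|\langle h.u_j,v_j\rangle_{I_j}|\leq\|u_j\|\cdot\|v_j\|$. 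Applying the Cauchy--Schwarz inequality between $c_\mu$ and $c_\nu$ then yields
\[
\int_H \bigl|\langle h.u,v\rangle_\xi\bigr|\,\od\! h \;\leq\; \|u_j\|\cdot\|v_j\|\cdot\|c_\mu\|_{L^2(H)}\cdot\|c_\nu\|_{L^2(H)},
\]
which simultaneously furnishes the absolute convergence and the joint continuity of the form on $\pi_\xi$. Hermitian symmetry is inherited pointwise from $\langle\cdot,\cdot\rangle_\xi$, and $H$-invariance follows from the left-invariance of $\od\! h$.

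The only delicate point is ensuring that the $L^2(H)$ estimate on smooth matrix coefficients is genuinely continuous in the Casselman--Wallach topologies of $\pi_\mu$ and $\pi_\nu$; this, however, is part of the Casselman--Wallach theorem, which asserts that the assignment $(u,v)\mapsto c_\mu$ defines a continuous bilinear map from $\pi_\mu\times\overline{\pi_\mu}$ into $\caC(H)$. Since continuity of seminorms on $\pi_\xi$ follows formally from continuity of seminorms on each factor, no additional analytic input is needed beyond this classical fact.
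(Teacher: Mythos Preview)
Your argument has a genuine gap at its core step. The claim that smooth matrix coefficients of a tempered Casselman--Wallach representation lie in the Harish-Chandra Schwartz space $\caC(H)$, and hence in $L^2(H)$, is false in general. Temperedness only guarantees the \emph{weak inequality} $|c(h)|\le C\,\Xi_{K_0}(h)(1+\sigma(h))^d$ for some $d\ge 0$; it does not give the rapid decay $|c(h)|\le C_N\,\Xi_{K_0}(h)(1+\sigma(h))^{-N}$ required for membership in $\caC(H)$. A concrete counterexample is the spherical function $\Xi_{K_0}$ itself: it is a $K_0$-finite matrix coefficient of the unitary spherical principal series of $H=\SL_n(\K)$, which is tempered, yet $\Xi_{K_0}\notin L^2(H)$. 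In the present situation $\pi_\mu|_H$ and $\pi_\nu|_H$ are unitary principal series of $\SL_n(\K)$, so their smooth matrix coefficients are only in $L^{2+\epsilon}(H)$ for every $\epsilon>0$, not in $L^2(H)$. Your Cauchy--Schwarz step therefore breaks down, and with only the pointwise bound $|c_j|\le\|u_j\|\,\|v_j\|$ from unitarity of $I_j|_H$, the integrand is merely dominated by a multiple of $\Xi_{K_0}^2$, whose integral over $H$ diverges.

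The paper's proof avoids this by extracting genuine decay from the $I_j$ factor rather than just boundedness. Using the Cartan decomposition $H=K_0A^+K_0$ and Sun's bound \cite{Sun09-2} for the two tempered factors, one gets $|c_\mu(a)|,|c_\nu(a)|\le C\,\Xi_{K_0}(a)$. For $I_j$ the paper realizes it as a Langlands quotient of a minimal principal series $\Ind_B^H(\chi\otimes 1)$ and invokes Wallach's asymptotic estimate \cite[Lemma~5.2.8]{Wa1} to obtain $|c_j(a)|\le c\,\chi(a)\,\Xi_{K_0}(a)(1+\log\|a\|)^d$. The product is then controlled by $\chi(a)\,\Xi_{K_0}(a)^3(1+\log\|a\|)^d$, and the explicit exponents of $\chi$ together with the standard estimate for $\Xi_{K_0}$ \cite[Theorem~4.5.3]{Wa1} make $\int_{A^+}\chi(a)\,\Xi_{K_0}(a)^3(1+\log\|a\|)^d\,\delta_B(a)\,\od^\times a$ finite. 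The extra $\Xi_{K_0}$ coming from $I_j$, not mere unitarity, is what makes the integral converge.
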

\bp
We may and do assume that
\[
u=u_1\otimes u_2\otimes u_3 \quad \mbox{and} \quad v=v_1\otimes v_2 \otimes v_3,
\]
where $u_1, v_1\in \pi_\mu$, $u_2, v_2 \in \pi_\nu$ and $u_3, v_3 \in I_j$. Let $H=K_0A^+K_0$ be the Cartan decomposition of $H$, where
\[
A^+ :=\{\mbox{diag}(a_1,a_2,\dots,a_n)\vert \prod_{i=1}^n a_i=1,\mbox{ and } a_i\geq a_{i+1}>0\mbox{ for all }1\leq i\leq n-1\}
\]
is the positive Weyl chamber, and $K_0=\SU(n)$ is the maximal compact subgroup of $H$.
It is well known that $\int_H\vert\langle h.u, v \rangle_\xi\vert \od\! h<\infty$ if and only if $\int_{A^+}\vert\langle a.u, v \rangle_\xi\vert \delta_B(a)\od\!^\times a<\infty$, where $\od\!^\times a$ is a Haar measure on $A^+$, and $\delta_B$ is the modular character of the standard Borel subgroup $B$ of $H$.

Denote by $\Xi_{K_0}$ the Harish-Chandra function on $H$ associated to the maximal compact subgroup $K_0$ (see \cite[Section 4.5.3]{Wa1}). By \cite[Theorem 1.2]{Sun09-2}, there is a continuous seminorm $\vert \, \cdot\,\vert_{\pi_\mu}$ on $\pi_\mu$ such that
\be \label{inequal-1}
\vert\langle \pi_\mu(a).u_1, v_1\rangle_{\pi_\mu}\vert \leq \Xi_{K_0}(a)\cdot\vert u_1\vert_{\pi_\mu} \vert v_1\vert _{\pi_\mu}, \quad u_1,v_1\in \pi_\mu, a \in A^+.
\ee
Analogously, there is a continuous seminorm $\vert \, \cdot\,\vert_{\pi_\nu}$ on $\pi_\nu$ such that
\be \label{inequal-2}
\vert\langle \pi_\nu(a).u_2, v_2\rangle_{\pi_\nu}\vert \leq \Xi_{K_0}(a)\cdot\vert u_2\vert_{\pi_\nu} \vert v_2\vert _{\pi_\nu}, \quad u_2,v_2\in \pi_\nu, a \in A^+.
\ee
Applying Langlands classification (due to Zhelobenko in the case of complex groups \cite{Zh}), one sees that $I_j$ is isomorphic to the irreducible quotient of $\Ind_B^H (\chi \otimes 1)$, where $B=LN$ is the Borel subgroup with Levi factor $L$ and unipotent radical $N$, and
\[
\chi(a)=(a_1\overline{a_1})^{\frac{n-2}{2}-\kappa}(a_2\overline{a_2})^{\frac{n-4}{2}-\kappa}\cdots(a_{n-1}\overline{a_{n-1}})^{-\frac{n-2}{2}-\kappa}a_n^{(n-1)\kappa-k_\eta}\overline{a_n}^{k_\eta-(n+1)\kappa}
\]
for $a=\mbox{diag}(a_1,a_2,\dots,a_n)\in L$. By \cite[Lemma 5.2.8]{Wa1}, there are constants $c,d>0$ such that
\be \label{inequal-3}
\vert\langle I_j(a).u_3, v_3\rangle_{I_j}\vert \leq c\cdot\chi(a)\Xi_{K_0}(a)(1+\log \| a \|)^d, \quad u_3,v_3\in I_j, a \in A^+,
\ee
where $\|\cdot\|$ is certain norm on $H$. By the estimate of $\Xi_{K_0}$ in \cite[Theorem 4.5.3]{Wa1} and \eqref{inequal-1}, \eqref{inequal-2}, \eqref{inequal-3}, the integral $\int_{A^+}\vert\langle a.u, v \rangle_\xi\vert \delta_B(a)\od\!^\times a$ is convergent. Thus $\int_H\langle h.u, v \rangle_\xi \od\! h$ converges absolutely.
The  map defined by \eqref{integral} yields an $H$-invariant Hermitian form since $H$ is unimodular.
\ep
Write $H=SK_0$ for the Cartan decomposition of $H$, where
\[
S:=\{ \mbox{positive definite Hermitian matrices with determint  1}\}.
\]
\begin{lemma}\label{lemma-positive-matrix-coef} \emph{(\cite[Theorem 1.4]{Sun09})}
For every nonzero vector $u$ in the minimal $K_0^3$-type $\tau_{\xi}^+$ of $\pi_{\xi}$, one has that
\[
\langle g.u, u\rangle _{\xi} > 0, \quad \mbox {for all } g\in S^3.
\]
\end{lemma}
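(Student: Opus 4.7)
The plan is to reduce the positivity assertion to three independent instances of the cited \cite[Theorem 1.4]{Sun09} and glue them together via the elementary fact that the tensor product of positive-definite Hermitian operators is positive-definite. Under assumption \eqref{assumption}, each of $\pi_\mu|_H$, $\pi_\nu|_H$, $I_j|_H$ is an irreducible tempered unitary Casselman--Wallach $H$-module, and the three sesquilinear forms $\langle\,,\,\rangle_{\pi_\mu}$, $\langle\,,\,\rangle_{\pi_\nu}$, $\langle\,,\,\rangle_{I_j}$ are positive-definite and $H$-invariant. Since $\langle\,,\,\rangle_\xi$ is defined as their tensor product and $g=(g_1,g_2,g_3)\in S^3$ acts factor-wise, the restriction of $(u,v)\mapsto\langle g.u,v\rangle_\xi$ to $\tau_\xi^+=\tau_\mu^+\otimes\tau_\nu^+\otimes\sigma_j^+$ is exactly the tensor product of the three restricted forms $(u_i,v_i)\mapsto\langle g_i.u_i,v_i\rangle_{\ast}$ on the respective minimal $K_0$-types.

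Next I would verify that each of these three restricted forms is a positive-definite Hermitian form. Hermiticity is automatic because $H$-unitarity makes every $g_i\in S$ act as a self-adjoint operator in its representation space. Diagonal positivity is precisely the content of \cite[Theorem 1.4]{Sun09} applied to the individual tempered unitary representation of $H$: for every nonzero $u_i$ in its minimal $K_0$-type, $\langle g_i.u_i,u_i\rangle_{\ast}>0$. Identifying $\tau_\mu^+|_{K_0}$, $\tau_\nu^+|_{K_0}$, $\sigma_j^+|_{K_0}$ with the genuine minimal $K_0$-types poses no difficulty, because $K=\U(n)$ differs from $K_0=\SU(n)$ only by the central circle, so restriction preserves both irreducibility and Vogan minimality. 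Combined with Hermiticity, the pointwise positivity of the diagonal upgrades each operator to positive-definite on the corresponding finite-dimensional minimal $K_0$-type.

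Finally, the tensor product of positive-definite Hermitian operators on finite-dimensional Hilbert spaces is positive-definite: writing each factor as $P_i^2$ for its positive-definite square root, the operator $P_1\otimes P_2\otimes P_3$ is self-adjoint and invertible, so its square $A_1\otimes A_2\otimes A_3$ is positive-definite and hence, for any nonzero vector $u$, one has $\langle(A_1\otimes A_2\otimes A_3)u,u\rangle=\|(P_1\otimes P_2\otimes P_3)u\|^2>0$. Specialised to our setting this yields $\langle g.u,u\rangle_\xi>0$ for every nonzero $u\in\tau_\xi^+$ and every $g\in S^3$, as required. The only substantive ingredient is Sun's cited theorem, which packages the hard analytic input on positivity of matrix coefficients at minimal $K$-types of tempered unitary representations; the tensor-product step is then formal bookkeeping, and the identification of minimal $K_0$-types under $\U(n)\downarrow\SU(n)$ is the only minor check that must be recorded.
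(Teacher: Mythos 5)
Your reduction to three independent applications of Sun's theorem, glued via the positivity of tensor products of inner products, is a legitimate alternative to what the paper does. The paper simply treats Lemma \ref{lemma-positive-matrix-coef} as a single, direct instance of \cite[Theorem 1.4]{Sun09} applied to the real reductive group $H^3$, the irreducible Casselman--Wallach representation $\pi_\xi$ of $H^3$, its minimal $K_0^3$-type $\tau_\xi^+$, and the positive Cartan factor $S^3$; no decomposition into factors is carried out. Your route buys a more elementary verification of the hypotheses (only the individual $H$-modules need to be checked) at the cost of the tensor-product bookkeeping, and both routes lead to the same conclusion.

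There is, however, one incorrect step in your justification that should be repaired. The sentence asserting that ``$H$-unitarity makes every $g_i\in S$ act as a self-adjoint operator'' is false: in a unitary representation $\pi$ of $H$ one has $\pi(g)^{*}=\pi(g^{-1})$ for every $g\in H$, so for $g\in S$ the operator $\pi(g)$ is \emph{unitary}, not self-adjoint, and $\pi(g)^{*}=\pi(g)$ would force $g^2=e$, i.e.\ $g=e$ since $S\cap K_0=\{e\}$. The Hermiticity of each restricted form $B_i(u,v):=\langle g_i.u,\,v\rangle$ on the finite-dimensional minimal $K_0$-type should instead be derived \emph{from the positivity conclusion itself}: Sun's theorem gives $B_i(u,u)>0$ (in particular real) for every nonzero $u$, and a sesquilinear form whose diagonal is real is automatically Hermitian by the polarization identity; it is then positive-definite because the diagonal is strictly positive. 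With that correction your argument is sound; the remaining points (the tensor product of positive-definite Hermitian forms is positive-definite Hermitian, and irreducibility as well as minimality of $K$-types are preserved under restriction from $\U(n)$ to $\SU(n)$ since the two groups differ by a central circle) are all correct.
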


By Lemma \ref{lemma-inv-dim-one-tau-xi-plus}, the space $(\tau_\xi^+)^{K_0}\neq 0$. Take a nonzero element $v_\xi \in(\tau_\xi^+)^{K_0}$. Then we have the following positivity lemma.
\begin{lemma}\label{lemma-positive-v-xi}
The Hermitian form defined in \eqref{integral} is positive definite on the one dimensional space $\C v_{\xi}$.
\end{lemma}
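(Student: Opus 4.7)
The plan is to reduce the matrix-coefficient integral to a manifestly positive one by combining the $K_0$-invariance of $v_\xi$ with the Cartan decomposition of $H$, and then invoking Lemma \ref{lemma-positive-matrix-coef} for pointwise positivity of the integrand. Convergence of the integral is already furnished by Lemma \ref{lemma-hermitian-form}, so the only real content is the sign.

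First I would use the Cartan decomposition $H=K_0 A^+ K_0$ appearing in the proof of Lemma \ref{lemma-hermitian-form}. Since $\langle\,\cdot\,,\,\cdot\,\rangle_\xi$ is $H^3$-invariant, it is in particular invariant under the diagonal action of $K_0$ on both slots; together with $k.v_\xi=v_\xi$ for all $k\in K_0$, this makes $h\mapsto \langle h.v_\xi, v_\xi\rangle_\xi$ bi-$K_0$-invariant, and for $h=k_1 a k_2$ with $k_1,k_2\in K_0$, $a\in A^+$, one has
\[
\langle h.v_\xi,\,v_\xi\rangle_\xi
 = \langle k_1 a.v_\xi,\,v_\xi\rangle_\xi
 = \langle a.v_\xi,\,k_1^{-1}.v_\xi\rangle_\xi
 = \langle a.v_\xi,\,v_\xi\rangle_\xi.
\]

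Next, I would observe that $A^+\subset S$, and that the diagonal embedding $H\hookrightarrow H^3$ carries each $a\in A^+$ into $S^3$. Lemma \ref{lemma-positive-matrix-coef} therefore yields $\langle a.v_\xi, v_\xi\rangle_\xi>0$ for every $a\in A^+$. Applying the Cartan integration formula, whose Jacobian $J(a)$ is strictly positive on the interior of $A^+$, and using bi-$K_0$-invariance, I obtain
\[
\int_H \langle h.v_\xi,v_\xi\rangle_\xi\,\od\! h
 = \vol(K_0)^2 \int_{A^+} \langle a.v_\xi,v_\xi\rangle_\xi\,J(a)\,\od\! a \;>\; 0.
\]
This shows that the Hermitian form defined in \eqref{integral} evaluates to a strictly positive number at $v_\xi$, so it is positive definite on the one-dimensional space $\C v_\xi$.

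No serious obstacle is expected here: everything reduces to the polar decomposition plus the two preceding lemmas. The only minor point to check is that the combination of $H^3$-invariance of $\langle\,\cdot\,,\,\cdot\,\rangle_\xi$ and $K_0$-invariance of $v_\xi$ genuinely gives bi-$K_0$-invariance of the integrand, which is the one-line computation displayed above.
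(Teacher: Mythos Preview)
Your proof is correct and follows essentially the same strategy as the paper: reduce the integral via a decomposition of $H$ and the $K_0$-invariance of $v_\xi$, then apply Lemma \ref{lemma-positive-matrix-coef} for pointwise positivity of the remaining integrand. The only cosmetic difference is that the paper uses the polar decomposition $H=SK_0$ (integrating directly over $S$) rather than your $K_0A^+K_0$ decomposition, but both routes invoke the same key lemma and arrive at the same conclusion.
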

\bp
Denote by $\od\! k$ the Haar measure on $K_0$ such that the volume of $K_0$ is $1$, and let $\od\! s$ be a certain positive measure on $S$. By Lemma \ref{lemma-positive-matrix-coef}, one has that
\[
\begin{split}
&\int_H\langle h.v_{\xi},v_{\xi}\rangle_{\xi}\od\! h \\
=&\int_S\int_{K_0}\langle sk.v_{\xi},v_{\xi}\rangle_{\xi}\od\! k\od\! s \\
=& \int_S\langle s.v_{\xi},v_{\xi}\rangle_{\xi}\od\! s > 0.
\end{split}
\]
\ep

\begin{lemma}\label{lemma-not-vanish-tau-xi-plus} Under the assumption \eqref{assumption}, there exists an  element of $\mathrm{Hom}_{G}(\pi_{\xi}, \bbC)$ which does not vanish on $\tau_{\xi}^{+}\subset\pi_{\xi}$.
\end{lemma}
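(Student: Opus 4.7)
The plan is to construct such a functional by integrating the $H^3$-invariant positive definite Hermitian form $\langle\cdot,\cdot\rangle_\xi$ over the diagonal $H$ against the distinguished vector $v_{\xi}\in (\tau_{\xi}^+)^{K_0}$ from Lemma \ref{lemma-positive-v-xi}. Concretely, I would set
\[
\phi(u):=\int_H \langle h.u, v_{\xi}\rangle_{\xi}\, \od\! h, \qquad u\in\pi_{\xi}.
\]
Lemma \ref{lemma-hermitian-form} guarantees that this integral converges absolutely; rerunning its proof with $v_\xi$ fixed in the second slot bounds $|\phi(u)|$ by a continuous seminorm on $\pi_{\xi}$ times a constant, so $\phi$ is a continuous linear functional on $\pi_{\xi}$. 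Unimodularity of $H$ together with the change of variable $h\mapsto h h_0^{-1}$ gives $\phi(h_0.u)=\phi(u)$ for all $h_0\in H$, so $\phi\in\Hom_H(\pi_{\xi}, \bbC)$; and Lemma \ref{lemma-positive-v-xi} shows $\phi(v_{\xi})>0$, so $\phi$ does not vanish on $\tau_{\xi}^+$.

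The remaining step is to promote $H$-invariance to $G$-invariance. By construction of $H_{j-\frac{1}{2}}$, the central character of $I_j$ is exactly $\eta^{-1}=(\omega_{\pi_{\mu}}\omega_{\pi_{\nu}})^{-1}$, so the diagonal center $Z(G)\cong \K^\times$ acts trivially on $\pi_{\xi}=\pi_{\mu}\hat{\otimes}\pi_{\nu}\hat{\otimes}I_j$. Since $\K\cong\bbC$, every element of $\K^\times$ has an $n$-th root, yielding $G=Z(G)\cdot H$; combined with $H$-invariance, the trivial central action promotes $\phi$ to an element of $\Hom_G(\pi_{\xi}, \bbC)$, as required.

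The substantive technical work has already been absorbed into Lemmas \ref{lemma-hermitian-form}--\ref{lemma-positive-v-xi}, so what is left really is assembly. The only point that needs care is verifying that $\phi$ is continuous on the entire smooth Casselman--Wallach representation $\pi_{\xi}$ (not merely on an algebraic dense subspace), but this follows from the same Harish-Chandra function $\Xi_{K_0}$ estimates used to prove Lemma \ref{lemma-hermitian-form}. The hypothesis \eqref{assumption} with $j=-\kappa+\frac{1}{2}$ enters at exactly one place, namely to make $I_j|_H$ unitarizable so that $\langle\cdot,\cdot\rangle_\xi$ is positive definite in the first place; this is the conceptual reason the argument is confined to the central critical place and does not immediately extend to other critical $j$.
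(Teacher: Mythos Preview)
Your proof is correct and follows essentially the same approach as the paper: define $\phi(u)=\int_H\langle h.u,v_\xi\rangle_\xi\,\od h$, use Lemmas \ref{lemma-hermitian-form} and \ref{lemma-positive-v-xi} to get a nonzero $H$-invariant functional, and then promote to $G$-invariance via the trivial central character of $\pi_\xi$. You supply more detail on continuity and on the decomposition $G=Z(G)\cdot H$ than the paper does, but the argument is the same.
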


\bp
Define
\[
\begin{array}{rcl}
\phi : \pi_\xi &\to &\C, \\
u & \mapsto &\int_H\langle h.u, v_\xi \rangle_\xi \od\! h.
\end{array}
\]
Then $\phi\in \Hom_H(\pi_{\xi}, \bbC)$. By Lemma \ref{lemma-positive-v-xi}, $\phi$ does not vanish on $\tau_{\xi}^{+}$. Since the representation $\pi_{\xi}$ has a trivial central character when restricted to $G$, the map
$\phi$ is $G$-intertwining, which completes the proof of the lemma.
\ep

\begin{lemma}\label{lemma-mult-one} Suppose that $\mu$ and $\nu$ are compatible (Definition \ref{def-compatible}), and $j$ is a critical place for $\pi_\mu \times \pi_\nu$. Then the inequality
\be \label{multi-1}
\mathrm{dim \mspace{6mu} Hom}_G(\pi_\xi, \bbC)\leq 1
\ee
holds.
\end{lemma}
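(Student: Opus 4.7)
The plan is to exhibit an explicit injection from $\mathrm{Hom}_G(\pi_\xi, \bbC)$ into a canonically one-dimensional space, namely $\mathrm{Hom}_K(\tau_\xi^+, \bbC)$, via restriction to the minimal $K^3$-type of $\pi_\xi$. Recall that assumption \eqref{assumption} is in force throughout Section 5, so $j = -\kappa + \frac{1}{2}$, and in particular $I_j\vert_{\SL_n(\K)}$ is unitary. By Lemmas \ref{lemma-Ij-min-K} and \ref{lemma-Jmu-min-K}, the $K^3$-type $\tau_\xi^+ = \tau_\mu^+ \otimes \tau_\nu^+ \otimes \sigma_j^+$ appears in $\pi_\xi$ with multiplicity one, so it may be regarded unambiguously as a distinguished subspace of $\pi_\xi$.

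The key construction is the restriction map
$$
\mathrm{res}: \mathrm{Hom}_G(\pi_\xi, \bbC) \to \mathrm{Hom}_K(\tau_\xi^+, \bbC), \qquad \phi \mapsto \phi\vert_{\tau_\xi^+},
$$
which is well-defined because any $G$-equivariant linear functional is in particular $K$-equivariant under the diagonal embedding $K \hookrightarrow K^3$. Since $\tau_\xi^+$ is a finite-dimensional semisimple $K$-module, one has $\dim \mathrm{Hom}_K(\tau_\xi^+, \bbC) = \dim (\tau_\xi^+)^K$, which equals $1$ by Lemma \ref{lemma-inv-dim-one-tau-xi-plus}.

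Injectivity of $\mathrm{res}$ is precisely the content of Proposition \ref{prop-not-vanish-tau-xi-plus}: every nonzero element $\phi \in \mathrm{Hom}_G(\pi_\xi, \bbC)$ does not vanish on $\tau_\xi^+ \subset \pi_\xi$, so $\mathrm{res}(\phi) \neq 0$. Combining this with the one-dimensionality of the target yields $\dim \mathrm{Hom}_G(\pi_\xi, \bbC) \leq 1$, the desired bound.

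No genuine obstacle remains, since the substantive work has already been carried out: the positivity-of-matrix-coefficients argument behind Proposition \ref{prop-not-vanish-tau-xi-plus}, which crucially uses that $I_j$ is unitary on $\SL_n(\K)$ under assumption \eqref{assumption}, supplies the injectivity, while the finite-dimensional branching analysis in Lemma \ref{lemma-inv-dim-one-tau-xi-plus} supplies the one-dimensional target. The lemma is then a short formal consequence of these two inputs.
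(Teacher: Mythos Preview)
Your argument is circular. In the paper, Proposition \ref{prop-not-vanish-tau-xi-plus} is \emph{deduced from} Lemma \ref{lemma-mult-one}: the sentence immediately following the proof of Lemma \ref{lemma-mult-one} reads ``By Lemmas \ref{lemma-not-vanish-tau-xi-plus} and \ref{lemma-mult-one}, it is easy to see that Proposition \ref{prop-not-vanish-tau-xi-plus} holds.'' The logic there is that Lemma \ref{lemma-not-vanish-tau-xi-plus} produces \emph{one} functional not vanishing on $\tau_\xi^+$, and then the bound $\dim \Hom_G(\pi_\xi,\bbC)\le 1$ forces \emph{every} nonzero functional to have this property. You have reversed this implication, invoking Proposition \ref{prop-not-vanish-tau-xi-plus} to obtain injectivity of the restriction map and hence the dimension bound. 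That is not permissible.

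Note that Lemma \ref{lemma-not-vanish-tau-xi-plus} alone (which \emph{is} available at this point) does not rescue your approach: knowing that \emph{some} nonzero $\phi$ restricts nontrivially to $\tau_\xi^+$ does not prevent the existence of another nonzero $\phi'$ vanishing there, so it does not give injectivity of your map $\mathrm{res}$.

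The paper's proof proceeds along an entirely different line: one realizes $I_j$ as $\vert\det\vert_\bbC^j\otimes\caS(\bbC^n)_{Z,\chi_j}$ via a Schwartz model (using \cite{Xue} to pass from $\caS(\bbC^n\setminus\{0\})$ to $\caS(\bbC^n)$ in cases (b) and (c)), and then applies the archimedean multiplicity-one theorem of Sun--Zhu \cite[Theorem C]{SZ}, which gives $\dim\Hom_G(\pi_1\otimes\pi_2\otimes\caS(\bbC^n),\chi)\le 1$ directly. This argument is independent of the positivity machinery and in fact works for every critical place $j$, not only for $j=-\kappa+\tfrac12$.
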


\bp
Denote by $\CS(\C^n)$ the space of Schwartz functions on $\C^n$. The space $\CS(\C^n)$ carries an action of $G$ defined by
\[
g.f(v):=f(vg), \quad v\in \C^n, f\in\CS(\C^n), g\in G.
\]
Let $Z\cong \C^\times$ be the center of $G$, and let $P_0$ be the subgroup of $P$ with the last row being $v_0:=(0,\dots, 0 ,1 )$. Then $P=P_0\cdot Z$. Define a character $\chi_j$ of $Z$ as
\[
\chi_j(z):=z^{-nj-k_\eta}\cdot \overline z ^ {-nj+k_\eta-2n\kappa}, \quad z\in Z.
\]
Then
\[
I_j=\uInd_P^G H_j=  \vert \det\vert_\C^j \otimes \uInd_P^G(1\otimes \chi_j),
\]
where $\mspace{1mu}^\mathrm{u}\mspace{-3mu}\Ind$ stands for the non-normalized smooth induction.

Write $\CS(\C^n\setminus\{0\})$ for the space of Schwartz functions on $\C^n\setminus\{0\}$. The linear map
\[
\begin{array}{rcl}
\CS(\C^n\setminus\{0\}) &\to & C^\infty(G),\\
\varphi & \mapsto & \left(g\mapsto\int_Z \varphi(v_0gz)\chi_j(z)^{-1}\od\! h\right)
\end{array}
\]
induces a $G$-intertwining isomorphism
\be
\CS(\C^n\setminus\{0\})_{Z,\,\chi_j}\cong \Ind_P^G(1\otimes \chi_j),
\ee
where $(\,\cdot\,)_{Z,\,\chi_j}$ indicates the maximal Hausdorff quotient space on which $Z$ acts through the character $\chi_j$. Suppose we are in case (b) of Proposition \ref{prop-BW}, then
\[
-nj-k_\eta \geq 0 \quad \mbox{and} \quad -nj+k_\eta-2n\kappa\leq -n.
\]
By Theorem 1.1 (a) of \cite{Xue}, the natural imbedding $\CS(\C^n\setminus\{0\})\hookrightarrow \CS(\C^n)$ induces an isomorphism
\be
\CS(\C^n\setminus\{0\})_{Z,\,\chi_j}\cong \CS(\C^n)_{Z,\,\chi_j}
\ee
of representations of $G$. Hence
\be \label{iso-deg-to-schwartz}
I_j\cong \vert \det\vert_\C^j \otimes \CS(\C^n\setminus\{0\})_{Z,\,\chi_j}\cong \vert \det\vert_\C^j \otimes \CS(\C^n)_{Z,\,\chi_j}.
\ee
By \cite[Theorem C]{SZ}, for all irreducible Casselman-Wallach representations $\pi_1, \pi_2$ and every character $\chi$ of $G$, one has that
\be \label{dim-leq-1}
\mathrm{dim \mspace{6mu} Hom}_G(\pi_1\otimes \pi_2\otimes \CS(\C^n), \chi)\leq 1.
\ee
Now the inequality \eqref{multi-1} is a straightforward consequence of  \eqref{iso-deg-to-schwartz} and \eqref{dim-leq-1}. Analogous analysis implies that \eqref{multi-1} also holds for case (c) of Proposition \ref{prop-BW}.

\ep

By Lemmas \ref{lemma-not-vanish-tau-xi-plus} and \ref{lemma-mult-one}, it is easy to see that Proposition \ref{prop-not-vanish-tau-xi-plus} holds.

\section{The relative cohomology spaces}

Note that the center $\K^{\times}$ of $GL(n, \K)$ acts trivially on $\pi_\mu\otimes F_\mu^{\vee}$. Recall that $\frg=\mathfrak{g}\mathfrak{l}_n(\bbC)\times \mathfrak{g}\mathfrak{l}_n(\bbC)$, and that $\tilde{\frk}$ is the complexified Lie algebra of $\tilde{K}=GU(n)$. Now we have
\begin{eqnarray*}
\mathrm{H}^{b_n}(\frg, GU(n); \pi_\mu\otimes F_\mu^{\vee}) &=& \mathrm{H}^{b_n}(\mathfrak{s}\mathfrak{l}_n(\bbC)\times \mathfrak{s}\mathfrak{l}_n(\bbC), SU(n); \pi_\mu\otimes F_\mu^{\vee})\\
&=& \mathrm{Hom}_{SU(n)}(\wedge^{b_n}(\mathfrak{s}\mathfrak{l}_n(\bbC)\times \mathfrak{s}\mathfrak{l}_n(\bbC)/\mathfrak{s}\mathfrak{l}_n(\bbC)); \pi_\mu\otimes F_\mu^{\vee})\\
&=& \mathrm{Hom}_{SU(n)}(\wedge^{b_n}(\frg/\tilde{\frk}); \pi_\mu\otimes F_\mu^{\vee}),
\end{eqnarray*}
where at the penultimate step we use Proposition 9.4.3 of \cite{Wa1}. Note that $\pi_{\mu}|_{SL_n(\K)}$ is unitary (actually tempered). The analog also holds for $\pi_\nu\otimes F_\nu^{\vee}$. Now assume \eqref{assumption}. Then $j=-\kappa+\frac{1}{2}$, note that $I_{j}|_{SL_n(\K)}$ is unitary and $I_j\otimes V_j$ has a trivial central character as well. Thus, we have
$$
\mathrm{H}^{c_n}(\frg, GU(n); I_j\otimes V_j) = \mathrm{Hom}_{SU(n)}(\wedge^{c_n}(\frg/\tilde{\frk}); I_j\otimes V_j).
$$

\section{Proof of Theorem A}
Recall that $G=\GL(n, \K)$ and $H=\SL(n, \K)$.
Let us continue to assume \eqref{assumption}.  Now we are ready to prove Theorem A. We only work with Proposition \ref{prop-BW} (b).
Indeed, by the discussion in the previous section, we have that
\be\label{relative-coho-big}
\rmH^{2b_n+c_n}\left(\frg^3, \tilde K^3; \pi_\xi \otimes F_\xi^{\vee}\right)=\mathrm{Hom}_{SU(n)}\left(\wedge^{2b_n+c_n}\left((\frg/\tilde{\frk})^3\right); \pi_{\xi}\otimes F_{\xi}^{\vee}\right).
\ee
Here we write $\frg/\tilde{\frk}\oplus \frg/\tilde{\frk}\oplus \frg/\tilde{\frk}$ as $(\frg/\tilde{\frk})^3$ for short.
Likewise,
\be\label{relative-coho-small}
\rmH^{2b_n+c_n}(\frg, \tilde K; \bbC)=\mathrm{Hom}_{SU(n)}(\wedge^{2b_n+c_n}(\frg/\tilde{\frk}); \bbC).
\ee

Note that $\tau_n\otimes\tau_n\otimes \sigma_n$ is the PRV-component of $\tau_{\xi}^{+}\otimes\tau_{\xi}^{\vee}$. Write
$$
\varphi_{\xi}:  \tau_n\otimes\tau_n\otimes \sigma_n \to \pi_\xi\otimes F_\xi^{\vee}
$$
for the inclusion
$$
\tau_n\otimes\tau_n\otimes \sigma_n \subset \tau_{\xi}^{+}\otimes\tau_{\xi}^{\vee}
\subset \pi_\xi\otimes F_\xi^{\vee}.
$$

Recall from the introduction that we can pick up a nonzero element
$$
\phi_\pi \in \Hom_G(\pi_\xi, \bbC)
$$
which does not vanish on $\tau_{\xi}^+$, see Proposition \ref{prop-not-vanish-tau-xi-plus}.
On the other hand, Lemma \ref{lemma-mult-one-sigma-j} guarantees the existence of a nonzero element
$$
\phi_F\in\mathrm{Hom}_{G_{\bbC}}(F_{\xi}^{\vee}, \bbC)
$$
which does not vanish on $\tau_\xi^\vee$. Recall the map $\eta_n:
\wedge^{2b_n+c_n}((\frg/\tilde{\frk})^3) \to \tau_n\otimes\tau_n\otimes\sigma_n$ from \eqref{composition-iota-eta}. The composition $\varphi_\xi\circ \eta_n$ is an element of
\eqref{relative-coho-big}. Its image under the map \eqref{map-ThmA} of Theorem A equals the composition map
\be\label{composition-final}
\begin{CD}
\wedge^{2b_n+c_n}(\frg/\tilde{\frk}) @> \iota_n > > \wedge^{2b_n+c_n}((\frg/\tilde{\frk})^3)  @> \eta_n > > \tau_n\otimes\tau_n\otimes\sigma_n  @> \varphi_\xi > >
\pi_\xi\otimes F_\xi^{\vee} @> \phi_\pi\otimes \phi_F > > \bbC.
\end{CD}
\ee
By Proposition \ref{prop-PRV}, the composition $(\phi_\pi\otimes \phi_F) \circ \varphi_\xi$ is nonzero. Since it is $K$-invariant, it does not vanish on $(\tau_n\otimes\tau_n\otimes\sigma_n)^K$. By Lemma \ref{lemma-mult-one-tau-n-tau-n-sigma-n}, the latter space is equal to the image of $\eta_n\circ \iota_n$. Therefore the composition \eqref{composition-final} is nonzero. This finishes the proof of Theorem A.

%\[
%\begin{CD}
%A\prod B @>p_A > {\rm sbuscript}> A \\
%@Vp_BV {\rm right }V  @AAfA \\
%B @<<g< Z
%\end{CD}
%\]

%\begin{diagram}
%G           & \rTo^f             & H\\
%\dTo_{\nu}  & \ruDotsto~{\bar f}    \\
%G/K
%\end{diagram}

%\begin{displaymath}
%\begin{diagram}
%\node{A}\arrow{e, t}{a}\arrow{s,l}{c}\arrow{ese}
%\node{B^*}\arrow{e, t}{b^*}
%\node{C}\arrow{s, r}{d}\arrow{wsw}\\
%\end{diagram}
%\end{displaymath}

%

\medskip

\centerline{\scshape Acknowledgements} We are heartily grateful to Binyong Sun  for guiding us through this project. We thank Anthony Knapp  for sharing knowledge with us. Part of this work is carried out during Dong's visit at MIT. He thanks the math department of MIT for offering excellent working conditions. He also thanks David Vogan sincerely for teaching him translation functors with great patience.

\end{document}